\newtheorem{theorem}{Theorem}[section]
\theoremstyle{plain}
\newtheorem{lemma}[theorem]{Lemma}
\newtheorem{proposition}[theorem]{Proposition}
\theoremstyle{definition}
\newtheorem{definition}[theorem]{Definition}
\theoremstyle{remark}
\newtheorem{remark}[theorem]{Remark}
\definecolor{A}{rgb}{.75,1,.75}
\numberwithin{equation}{section}
\newcommand{\C}{\mathbb C}
\newcommand{\Z}{\mathbb Z}
\newcommand{\Cl}{\mathcal C}
\newcommand{\h}{\mathfrak{h}}
\newcommand{\be}{\beta}
\newcommand{\al}{\alpha}
\newcommand{\wtd}{\widetilde}
\newcommand{\td}{\tilde}
\newcommand{\aH}{\mathfrak{H}}
\newcommand{\cdaH}{\ddot{\mathfrak{H}}^{\sim}} 
\newcommand{\daha}{\ddot{\mathfrak{H}}} 
\newcommand{\dahc}{\ddot{\mathfrak H}^{\mathfrak c}} 
\newcommand{\sdaha}{\ddot{\mathfrak H}^-} 
\begin{document}

\title[Rational spin double affine Hecke algebras]
{Hecke-Clifford algebras and spin Hecke algebras II:\\ the
rational double affine type}

\author[Ta Khongsap]{Ta Khongsap}

\author[Weiqiang Wang]{Weiqiang Wang}
\address{Department of Math., University of Virginia,
Charlottesville, VA 22904} \email{tk7p@virginia.edu (Khongsap);
ww9c@virginia.edu (Wang)}

\begin{abstract}
The notion of rational spin double affine Hecke algebras (sDaHa)
and rational double affine Hecke-Clifford algebras (DaHCa)
associated to classical Weyl groups are introduced. The basic
properties of these algebras such as the PBW basis and Dunkl
operator representations are established. An algebra isomorphism
relating the rational DaHCa to the rational sDaHa is obtained. We
further develop a link between the usual rational Cherednik
algebra and the rational sDaHa by introducing a notion of rational
covering double affine Hecke algebras.
\end{abstract}

\maketitle

\setcounter{tocdepth}{1} \tableofcontents

\section{Introduction}

\subsection{}

The rational Cherednik algebra (see Etingof-Ginzburg \cite{EG} and
also Drinfeld \cite{Dr} for a more general deformation
construction) is a degenerate version of the double affine Hecke
algebra \cite{Ch}, and it admits a polynomial representation via
the Dunkl operators \cite{Dun}. A similar degeneration in the case
of affine Hecke algebras was introduced and studied earlier in
\cite{Dr} in the type $A$ case and by Lusztig in general
\cite{Lu1, Lu2}. The rational Cherednik algebra has a rich
representation theory and it affords various interesting
connections to integrable systems, noncommutative geometry, etc.
We refer to Etingof \cite{Et} and Rouquier \cite{Rou} for reviews
and extensive references. The rational Cherednik algebra with one
particular parameter being zero, denoted by $\daha_W$ in this
Introduction, is known to have a large center \cite{EG} (cf.
Gordon \cite{Gor}).

In \cite{W1}, the second author introduced (degenerate) spin Hecke
algebras of affine and double affine type as well as double affine
Hecke-Clifford algebra, associated to I. Schur's spin symmetric
group \cite{Sch}. These algebras were shown to be closely related
to the affine Hecke-Clifford algebra of Nazarov \cite{Naz}. The
spin affine Hecke algebras and affine Hecke-Clifford algebras
associated to all classical Weyl groups have been recently
constructed by the authors \cite{KW}.

\subsection{}

In this paper we shall construct three classes of closely related
(super) algebras associated to each {\em classical} finite Weyl
group $W$: the rational double affine Hecke-Clifford algebra
(DaHCa) $\dahc_W$, the rational spin double affine Hecke algebra
(sDaHa) $\sdaha_W$, and the rational covering double affine Hecke
algebra (cDaHa) $\cdaH_W$. We show that the algebras $\dahc_W$ and
$\sdaha_W$ are Morita super-equivalent (in the terminology of
\cite{W2}) and that $\cdaH_W$ has both the rational Cherednik
algebra $\daha_W$ and the sDaHa $\sdaha_W$ as its natural
quotients. Some basic properties including the PBW basis theorem
and Dunkl operator realizations of these algebras are further
established.

We expect that these algebras afford very interesting
representation theory and connections with noncommutative
geometry.

\subsection{}

In \cite{Mo, KW} a double cover $\wtd{W}$ of the finite Weyl group
$W$ associated to a distinguished $2$-cocycle
\begin{eqnarray*} \label{ext}
1 \longrightarrow \Z_2 \longrightarrow \wtd{W} \longrightarrow W
\longrightarrow 1
\end{eqnarray*}
was considered. Denote $\Z_2 =\{1,z\}$. From now on, let $W$ be
one of the classical Weyl groups. In this paper, we define the
algebras $\dahc_W$, $\sdaha_W$ and $\cdaH_W$ for every $W$ of type
$A_{n-1}, D_n, B_n$ in terms of explicit generators and relations,
where the number of parameters in each of these algebras is the
number of conjugacy classes of reflections in $W$. The
compatibility among the defining relations for $\dahc_W$ (which would
imply the PBW basis property and similar compatibility for
$\sdaha_W$ and $\cdaH_W$ when combined with other results)
requires lengthy but elementary case-by-case verifications. In a
suitable sense, the defining relations are naturally and uniquely
dictated by the compatibility of these relations.

As is well known, the rational Cherednik algebra $\daha_W$ has a
triangular decomposition with the group algebra $\C W$ as its
middle term. We show that the algebras $\dahc_W$, $\sdaha_W$ and
$\cdaH_W$ also afford triangular decompositions which contain
$\Cl_n \rtimes \C W$, $\C W^-$ and $\C \widetilde{W}$ respectively
as the middle terms, where $\Cl_n$ denotes the Clifford algebra of
the reflection representation of $W$ with a natural $W$-action.
For instance, the rational DaHCa $\dahc_W$ and sDaHa $\sdaha_W$
have the following triangular decompositions:
\begin{eqnarray*}
\dahc_W & \cong & \C [x_1,\ldots,x_n] \otimes(\Cl_n \rtimes
\mathbb{C }W)
\otimes\C [y_1,\ldots,y_n] \\
\sdaha_W & \cong & \Cl[\xi_1,\ldots,\xi_n] \otimes\mathbb{C }W^-
\otimes\C [y_1,\ldots,y_n]
\end{eqnarray*}
where $\Cl[\xi_1,\ldots,\xi_n]$ is a noncommutative algebra with
$\xi_i \xi_j = -\xi_j \xi_i$ for $i\neq j$. The relations between
$\C W^-$ and $\Cl[\xi_1,\ldots,\xi_n]$ involve subtle signs
similar to those appearing in the spin affine Hecke algebras
defined in \cite{KW, W1}.

We further show that the algebras $\dahc_W$ and $\sdaha_W$ have
large centers which contain $\C [y_1,\ldots,y_n]^W$ and $\C
[x_1^2,\ldots, x_n^2]^W$ (and $\C [\xi_1^2,\ldots,\xi_n^2]^W$
respectively) as subalgebras. In particular, the algebras
$\dahc_W$ and $\sdaha_W$ are module-finite over their centers.

The group algebra $\C W$ and the spin Weyl group algebra $\C W^-$
appear as natural quotients of $\C \widetilde{W}$ by the ideals
$\langle z\mp 1\rangle$ respectively. We show that these quotient
maps, denoted by $\Upsilon_\pm$, extend to the setup of double
affine Hecke algebras. All these statements can be summarized in
the following commutative diagram with the vertical arrows being
natural inclusions:
$$
\begin{CD}
 \C W @<\Upsilon_+<< \C \widetilde{W} @>\Upsilon_->> \C W^- \\
 @VVV @VVV @VVV \\
 \daha_W @<\Upsilon_+<< \cdaH_W @>\Upsilon_->> \sdaha_W
  \end{CD}
$$

In \cite{KW}, we established a superalgebra isomorphism $\Phi:
\Cl_n \rtimes \C W \stackrel{\simeq}{\longrightarrow} \Cl_n
\otimes \C W^-$ (which actually holds also for exceptional Weyl
groups), generalizing the type $A$ result of Sergeev and
Yamaguchi. In this paper, we shall establish a Morita
super-equivalence between $\dahc_W$ and $\sdaha_W$, or more
explicitly, a superalgebra isomorphism between $\dahc_W$ and the
tensor algebra $\Cl_n \otimes \sdaha_W$ which extends the
isomorphism $\Phi$ (see \cite{W1} for the type $A$ case). This can
be summarized conveniently in the following commutative diagram
with the vertical arrows being natural inclusions:
$$
\begin{CD}
 \Cl_n \rtimes \C W @>\cong>\Phi> \Cl_n \otimes \C W^- \\
 @VVV @VVV \\
 \dahc_W @>\cong>\Phi> \Cl_n \otimes \sdaha_W
  \end{CD}
$$

\subsection{}

As our constructions in a way rely on a choice of orthonormal
basis of $\mathfrak h$, they do not seem to be easily extendable
to the exceptional Weyl groups. Also, in contrast to the setup of
rational Cherednik algebras in \cite{EG}, our Hecke algebras do
not seem to afford an extra parameter in a  natural way to
trivialize their center.

There has been another attempt (cf. Chmutova \cite{Chm}) to
generalize the rational Cherednik algebras and more generally
symplectic reflection algebras by adding a twist with a
$2$-cocycle of a finite group. But the approach therein does not
produce intrinsically interesting new algebras with nontrivial
$2$-cocycles of the Weyl groups as in our approach.

\subsection{}

The paper is organized as follows. In Section~\ref{sec:finite}, we
recall some facts about the distinguished double covers of the
Weyl groups. For more detailed treatment, consult \cite{KW}. We
introduce in Section~\ref{sec:daHCa} the rational DaHCa $\dahc_W$
and establish its PBW basis property. In
Section~\ref{sec:daHCaDunkl} the Dunkl operator representations of
$\dahc_W$ are obtained. Section~\ref{sec:sdaha} and
\ref{sec:sdahaDunkl} are the counterparts for the sDaHa $\sdaha_W$
of Section~\ref{sec:daHCa} and \ref{sec:daHCaDunkl} respectively.
In addition, the superalgebra isomorphism $\Phi$ relating the
sDaHa and DaHCa is established in Section~\ref{sec:sdaha}.
Finally, in Section~\ref{sec:coveringHecke} the rational cDaHa
$\cdaH_W$ is introduced and it provides a link between the sDaHa
$\sdaha_W$ and the usual DaHa $\daha_W$. Finally, in the Appendix
(Section~\ref{sec:Appendix}), we present the proofs of several
lemmas in Section~\ref{sec:daHCa} and \ref{sec:daHCaDunkl}.

{\bf Acknowledgements.} W.W. is partially supported by an NSF
grant.

\section{The spin Weyl groups} \label{sec:finite}

In this section, we recall from \cite{KW} some preliminary setups
which lead to Theorem~\ref{th:isofinite} below, but here we will
restrict ourselves to classical Weyl groups only. This is all we
need in the subsequent sections.
\subsection{A double covering of Weyl groups}
\label{subsec:spinWeyl}

Let $W$ be an (irreducible) finite Weyl group of classical type
(i.e. of type $A,B,D$) with the following presentation:
\begin{eqnarray} \label{eq:weyl}
\langle s_1,\ldots,s_n | (s_is_j)^{m_{ij}} = 1,\ m_{i i} = 1,
 \ m_{i j} = m_{j i} \in \Z_{\geq 2}, \text{for } i
 \neq j \rangle.
\end{eqnarray}
The integers $m_{i j}$ are specified by the Coxeter-Dynkin diagrams
whose vertices correspond to the generators of $W$ below. By
convention, we only mark the edge connecting $i,j$ with $m_{ij} \ge
4$. We have $m_{ij}=3$ for $i \neq j$ connected by an unmarked edge,
and $m_{ij}=2$ if $i,j$ are not connected by an edge.

 \begin{equation*}
 \begin{picture}(150,45) 
 \put(-99,18){$A_{n}$}
 \put(-30,20){$\circ$}
 \put(-23,23){\line(1,0){32}}
 \put(10,20){$\circ$}
 \put(17,23){\line(1,0){23}}
 \put(41,22){ \dots }
 \put(64,23){\line(1,0){18}}
 \put(82,20){$\circ$}
 \put(89,23){\line(1,0){32}}
 \put(122,20){$\circ$}
 \put(-30,9){$1$}
 \put(10,9){$2$}
 \put(74,9){${n-1}$}
 \put(122,9){${n}$}
 \end{picture}
 \end{equation*}
 %
 \begin{equation*}
 \begin{picture}(150,55) 
 \put(-99,18){$B_{n}(n\ge 2)$}
 \put(-30,20){$\circ$}
 \put(-23,23){\line(1,0){32}}
 \put(10,20){$\circ$}
 \put(17,23){\line(1,0){23}}
 \put(41,22){ \dots }
 \put(64,23){\line(1,0){18}}
 \put(82,20){$\circ$}
 \put(89,23){\line(1,0){32}}
 \put(122,20){$\circ$}
 \put(-30,10){$1$}
 \put(10,10){$2$}
 \put(74,10){${n-1}$}
 \put(122,10){${n}$}
 %
 \put(102,24){$4$}
 \end{picture}
 \end{equation*}
%
%
 \begin{equation*}
 \begin{picture}(150,75) 
 \put(-99,28){$D_{n} (n \ge 4)$}
 \put(-30,30){$\circ$}
 \put(-23,33){\line(1,0){32}}
 \put(10,30){$\circ$}
 \put(17,33){\line(1,0){15}}
 \put(35,30){$\cdots$}
 \put(52,33){\line(1,0){15}}
 \put(68,30){$\circ$ }
 \put(75,33){\line(1,0){32}}
 \put(108,30){$\circ$}
 \put(113,36){\line(1,1){25}}
 \put(138,61){$\circ$}
 \put(113,29){\line(1,-1){25}}
 \put(138,-1){$\circ$}
 \put(-29,20){$1$}
 \put(10,20){$2$}
 \put(60,20){$n-3$}
 \put(117,30){$n-2$}
 \put(145,0){$n-1$}
 \put(145,60){$n$}
 %
 \end{picture}
 \end{equation*}

\vspace{.5cm}

We shall be concerned about a distinguished double covering
$\wtd{W}$ of $W$:
$$
1 \longrightarrow \Z_2 \longrightarrow \wtd{W} \longrightarrow W
\longrightarrow 1.
$$
We denote by $\Z_2 =\{1,z\},$ and by $\td{t}_i$ a fixed preimage of
the generators $s_i$ of $W$ for each $i$. The group $\wtd{W}$ is
generated by $z, \td{t}_1,\ldots, \td{t}_n$ with relations (besides
the obvious relation that $z$ is central of order $2$) listed in the
following table, which corresponds to setting the $\alpha_i$ for all
$i$ in Karpilovsky \cite[Table 7.1]{Kar} to be $z$.

\begin{center}
\vskip 1pc
\vbox{\tabskip=0pt \offinterlineskip
\def\tablerule{\noalign{\hrule}}
\halign to345pt {\strut#& \vrule# \tabskip=1em plus2em& \hfil#&
\vrule#& #\hfil& \vrule#\tabskip=0pt \cr\tablerule
&&\omit\hidewidth $W$\hidewidth&& \omit\hidewidth
 Defining Relations for $\wtd{W}$ \hidewidth&\cr\tablerule
&& &&$\td{t}_i^2 =1, 1 \le i \le n,$&\cr
&&$A_n \quad$ &&$ \td{t}_i \td{t}_{i+1} \td{t}_i = \td{t}_{i+1}
\td{t}_i \td{t}_{i+1}, 1 \le i \le n-1$&\cr
&& &&$ \td{t}_i \td{t}_j = z \td{t}_j \td{t}_i \text{ if } m_{ij}
=2$&\cr\tablerule
&& &&$\td{t}_i^2 =1, 1 \le i \le n, \;\;\; \td{t}_i \td{t}_{i+1}
\td{t}_i= \td{t}_{i+1} \td{t}_i \td{t}_{i+1}, 1 \le i \le n-2$&\cr
&&$B_n \quad$ &&$\td{t}_i \td{t}_j = z \td{t}_j \td{t}_i, 1\le i<j
\le n-1, m_{ij} =2$&\cr
&& $ (n \ge 2)$ &&$\td{t}_i \td{t}_n =z \td{t}_n \td{t}_i, 1\le i
\le n-2$&\cr
&& &&$(\td{t}_{n-1} \td{t}_n)^2 = z (\td{t}_n
\td{t}_{n-1})^2$&\cr\tablerule
%
&& &&$\td{t}_i^2 =1, 1 \le i \le n$&\cr
&&$D_n \quad$ &&$\td{t}_i \td{t}_j \td{t}_i
= \td{t}_j \td{t}_i \td{t}_j \text{ if } m_{ij} =3$&\cr
&&$ (n \ge 4)$ &&$\td{t}_i \td{t}_j =z \td{t}_j \td{t}_i, 1 \le i<j
\le n, m_{ij} =2$&\cr\tablerule 
}

\vspace{.1cm}
 TABLE 1: The defining relations of $\widetilde{W}$
 }
\end{center}

\vskip 2pc

The quotient algebra $\C W^- :=\C \wtd{W} /\langle z+1\rangle$ of
$\C \wtd{W}$ by the ideal generated by $z+1$ will be called the {\em
spin Weyl group algebra} associated to $W$. Denote by $t_i \in \C
W^-$ the image of $\td{t}_i$.
%
The spin Weyl group algebra $\C W^-$ has a natural superalgebra
(i.e. $\Z_2$-graded algebra) structure by letting each $t_i$ be odd.
The algebra $\C W^-$ is generated by $t_1,\ldots, t_n$ with the
labeling as in the Coxeter-Dynkin diagrams and the explicit
relations summarized in the following table.
\bigskip%

 \begin{center}
\begin{tabular}
[t]{|l|l|}\hline Type of $W$ & Defining Relations for $\C W^-$\\
\hline $\qquad A_{n}$ & $t_{i}^{2}=1$,
$t_{i}t_{i+1}t_{i}=t_{i+1}t_{i}t_{i+1}$,\\ &
$(t_{i}t_{j})^{2}=-1\text{ if } |i-j|\,>1$\\
\hline & $t_{1},\ldots,t_{n-1}$ satisfy the relations for $\C W^-_{A_{n-1}}$, \\
$\qquad B_{n}$ & $t_{n}^{2}=1,(t_{i}t_{n})^{2}=-1$ if $i\neq n-1,n$, \\
&
$(t_{n-1}t_{n})^{4}=-1$\\
\hline & $t_{1},\ldots,t_{n-1}$ satisfy the relations for
$\C W^-_{A_{n-1}}$,\\
$\qquad D_{n}$& $t_{n}^{2}=1,(t_{i}t_{n})^{2}=-1$ if $i\neq n-2,
n$,
\\& $t_{n-2}t_{n}t_{n-2}=t_{n}t_{n-2}t_{n}$\\\hline
\end{tabular}

\vspace{.1cm}
 {TABLE 2: The defining relations of $\C W^-$}
 \end{center}

\bigskip%

By definition, the quotient by the ideal $\langle z-1\rangle$ of the
group algebra $\C \wtd{W}$ is isomorphic to $\C W$.

\bigskip%

\subsection{A superalgebra isomorphism} \label{subsec:iso}

Denote by $\h =\C^n$ the natural representation (respectively the
reflection representation) of the Weyl group $W$ of type $A_{n-1}$
(respectively of type $B_n,$ and $D_n$). Note that $\h$ carries a
$W$-invariant nondegenerate bilinear form $(-,-)$, which gives
rise to an identification $\h^*\cong \h$ and also a bilinear form
on $\h^*$ which will be again denoted by $(-,-)$.

Denote by $\Cl_n$ the Clifford algebra associated to $(\h,
(-,-))$. We shall denote by $\{c_i\}$ the generators in $\Cl_n$
corresponding to a standard orthonormal basis $\{e_i\}$ of $\C^n$
and denote by $\{\be_i\}$ the elements of $\Cl_n$ corresponding to
the simple roots $\{\al_i\}$ normalized with
$$
\be_i^2=1.
$$
More explicitly, $\mathcal{C}_n$ is generated by
$c_{1},\ldots,c_n$ subject to the relations
\begin{align} \label{clifford}
c_{i}^{2} =1,\qquad c_{i}c_{j} =-c_{j}c_{i}\quad (i\neq j).
\end{align}
For type $A_{n-1}$, we have
$\be_{i}=\frac{1}{\sqrt{2}}(c_{i}-c_{i+1}),1\leq i\leq n-1$. For
type $B_{n}$, we have an additional $\be_{n}=c_{n}$, and for type
$D_n$, $\be_{n}=\frac{1}{\sqrt{2}}(c_{n-1}+c_{n})$.

The action of $W$ on $\h$ and $\h^*$ preserves the bilinear form
$(-,-)$ and thus it acts as automorphisms of the algebra $\Cl_n$.
This gives rise to a semi-direct product $\Cl_n \rtimes \C W$.
Moreover, the algebra $\Cl_n \rtimes \C W$ naturally inherits the
superalgebra structure by letting elements in $W$ be even and each
$c_i$ be odd.

Given two superalgebras $\mathcal{A}$ and $\mathcal{B}$, we view
the tensor product of superalgebras $\mathcal{A}$ $\otimes$
$\mathcal{B}$ as a superalgebra with multiplication defined by
\begin{equation*}
(a\otimes b)(a^{\prime}\otimes b^{\prime})
=(-1)^{|b||a^{\prime}|}(aa^{\prime }\otimes bb^{\prime})\text{ \ \ \
\ \ \ \ }(a,a^{\prime}\in\mathcal{A},\text{
}b,b^{\prime}\in\mathcal{B})
\end{equation*}
where $|b|$ denotes the $\Z_2$-degree of $b$, etc. Also, we shall
use short-hand notation $ab$ for $(a\otimes b) \in \mathcal{A}$
$\otimes$ $\mathcal{B}$, $a = a\otimes1$, and $b=1\otimes b$.

\begin{theorem} \label{th:isofinite}
\cite{KW} We have an isomorphism of superalgebras:
$$\Phi: \Cl_n \rtimes \C W
\stackrel{\simeq}{\longrightarrow} \Cl_n \otimes \C W^-$$
which extends the identity map on $\Cl_n$ and sends $s_i \mapsto
-\sqrt{-1} \be_i t_i.$ The inverse map $\Psi$ is the extension of
the identity map on $\Cl_n$ which sends $ t_i \mapsto \sqrt{-1}
\be_i s_i.$
(In the terminology of \cite{W2}, the superalgebras $\Cl_n \rtimes
\C W$ and $\C W^-$ are Morita super-equivalent.)
\end{theorem}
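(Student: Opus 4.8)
The strategy is to check that the assignment $\Phi(s_i) = -\sqrt{-1}\,\be_i t_i$ (extending the identity on $\Cl_n$) and the assignment $\Psi(t_i) = \sqrt{-1}\,\be_i s_i$ (extending the identity on $\Cl_n$) define algebra homomorphisms out of $\Cl_n \rtimes \C W$ and $\Cl_n \otimes \C W^-$ respectively, and then that they are mutually inverse. Since $\Cl_n \rtimes \C W$ is generated by $\Cl_n$ together with the $s_i$, to see that $\Phi$ is well-defined it suffices to verify that the images $-\sqrt{-1}\,\be_i t_i$ satisfy (a) the braid/Coxeter relations \eqref{eq:weyl} defining $W$, and (b) the cross relations $s_i c_j s_i^{-1} = s_i(c_j)$ encoding the semidirect product, where $s_i$ acts on $\h$ by the reflection and hence on $\Cl_n$ by the induced orthogonal automorphism. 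Relation (b) is essentially automatic: $\be_i$ implements the reflection $s_i$ on $\Cl_n$ by conjugation (a standard fact about Clifford algebras — conjugation by a unit vector is $\mp$ the corresponding reflection), and $t_i$ commutes past $c_j$ up to the sign built into the superalgebra tensor product, so the two sign sources must be checked to cancel correctly. Symmetrically one checks that the $\sqrt{-1}\,\be_i s_i$ satisfy the defining relations of $\C W^-$ from Table~2.

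Concretely, for (a) one computes $(-\sqrt{-1}\,\be_i t_i)^2 = (-1)\be_i t_i \be_i t_i$; because $t_i$ is odd and $\be_i \in \Cl_n$, moving $t_i$ past $\be_i$ in the tensor-superalgebra introduces a sign depending on the parity of $\be_i$, and one uses $t_i \be_i t_i^{-1} = t_i(\be_i) = -\be_i$ (the action of $s_i$ fixes no nonzero multiple of $\al_i$; it negates $\al_i$) together with $\be_i^2 = 1$ and $t_i^2 = 1$ to get $s_i^2 = 1$. For the braid relation when $m_{ij} = 3$, say $j = i+1$: one must show $(\be_i t_i)(\be_{i+1}t_{i+1})(\be_i t_i) = (\be_{i+1}t_{i+1})(\be_i t_i)(\be_{i+1}t_{i+1})$ up to the scalar $(-\sqrt{-1})^3$ on each side cancelling. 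Expanding and pushing all the $t$'s to the right (picking up signs from the superalgebra rule and from $t_k(\be_l)$), this reduces to a Clifford-algebra identity among $\be_i, \be_{i+1}$ together with the spin braid relation $t_it_{i+1}t_i = t_{i+1}t_it_{i+1}$ from Table~2. For $m_{ij} = 2$ the relation $s_is_j = s_js_i$ must reproduce, after clearing scalars, the sign $(t_it_j)^2 = -1$ from Table~2 against the anticommutation $\be_i\be_j = -\be_j\be_i$ (valid since distinct simple roots with $m_{ij}=2$ are orthogonal) — the two minus signs cancel. The only genuinely delicate cases are the marked edges: the $m_{n-1,n}=4$ relation in type $B_n$, where $\be_n = c_n$ has $\be_{n-1}\be_n \ne -\be_n\be_{n-1}$ in general (the roots are not orthogonal), and one must match $(\be_{n-1}t_{n-1}\be_n t_n)^2$ against the relation $(t_{n-1}t_n)^4 = -1$; and similarly the extra $B_n$-type relations $\td t_i\td t_n = z\td t_n\td t_i$ versus $(t_it_n)^2 = -1$.

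The bookkeeping device that organizes all of this is: when the factors $-\sqrt{-1}\,\be_i t_i$ are multiplied, collect all the scalars $(-\sqrt{-1})$, push all the $t_i$'s to the right of all the $\be_j$'s using the superalgebra commutation sign $(-1)^{|t||\be|}$ and the relations $t_k \be_l = \pm \be_{?} t_k$ coming from the $W$-action on roots, and reduce the resulting word in the $t$'s using Table~2 and the word in the $\be$'s using \eqref{clifford}. One then reads off that the left side equals $\pm$ (scalar) times the identity or the appropriate rearrangement, and checks the sign is $+$. The main obstacle is purely organizational: assembling the sign contributions from three distinct sources — the scalar $\sqrt{-1}$ powers, the superalgebra Koszul sign, and the sign in $t_k(\be_l) = \pm \be_{l'}$ — and confirming that in each defining relation, including the marked-edge and orthogonal-pair cases, they conspire to the value in Table~1/Table~2. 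Once $\Phi$ and $\Psi$ are shown to be well-defined homomorphisms, that they are mutually inverse follows by evaluating on generators: $\Psi(\Phi(s_i)) = \Psi(-\sqrt{-1}\,\be_i t_i) = -\sqrt{-1}\,\be_i(\sqrt{-1}\,\be_i s_i) = \be_i^2 s_i = s_i$ (using that $\be_i$ is even-or-odd consistently and $\Psi$ fixes $\Cl_n$), and likewise $\Phi(\Psi(t_i)) = t_i$, while both fix $\Cl_n$ pointwise; since the generators match, $\Psi = \Phi^{-1}$, and $\Phi$ is an isomorphism of superalgebras because it visibly preserves the $\Z_2$-grading ($s_i$ even $\mapsto$ $\be_i t_i$, a product of two odd elements, hence even). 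This is the content of Theorem~\ref{th:isofinite}.
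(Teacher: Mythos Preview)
The paper does not prove Theorem~\ref{th:isofinite}; it is quoted from \cite{KW} (see the Remark immediately following the statement), so there is no in-paper argument to compare against. Your direct-verification approach---check that the images of the generators satisfy the defining relations on each side, then check the two maps invert each other on generators---is the natural one and is essentially what the proof in \cite{KW} consists of.

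One point of imprecision worth flagging: in the target algebra $\Cl_n \otimes \C W^-$ the element $t_k$ does \emph{not} act on $\be_l$ via the Weyl group action; the only interaction is the Koszul sign $t_k \be_l = -\be_l t_k$ (both odd). Your sentence ``the relations $t_k \be_l = \pm \be_{?} t_k$ coming from the $W$-action on roots'' conflates this with the semidirect-product side. In the actual computation you only use the superalgebra sign to collect the $\be$'s on the left and the $t$'s on the right, obtaining (for a word of length $m$) a factor $(-\sqrt{-1})^m (-1)^{\binom{m}{2}}$ times a Clifford word times a spin-Weyl word; the Coxeter relation then reduces to a Clifford identity among the $\be_i$ (e.g.\ $\be_i\be_j\be_i = \be_j\be_i\be_j$ when $m_{ij}=3$, which follows from $\be_i x \be_i = -s_i(x)$ for $x\in\h$) together with the corresponding relation in $\C W^-$ from Table~2. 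With that correction your outline goes through.
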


\begin{remark}
Theorem~\ref{th:isofinite} was formulated and proved in \cite{KW}
for every finite Weyl group including the exceptional types, and
the type $A$ case was due to Sergeev and Yamaguchi. See \cite{KW}
for more detail.
\end{remark}

\section{Rational double affine Hecke-Clifford algebras (DaHCa)}
\label{sec:daHCa}

In this section, we introduce the rational double affine
Hecke-Clifford algebras associated to the Weyl group $W$ of type
$A,D$ and $B$, and then establish the PBW property. The type $A$
case was treated in \cite{W1}.

\subsection{The definition of the algebras $\dahc_W$}
\subsubsection{The algebra $\dahc_W$ of type $A_{n-1}$}


We will identify $\C[\h^*] \cong \C[x_1,\ldots,x_n]$ and $\C[\h]
\cong \C[y_1,\ldots,y_n]$, where the $x_i$'s and $y_i$'s
correspond to the standard orthonormal basis $\{e_i\}$ for $\h^*$
and its dual basis $\{e_i^*\}$ for $\h$.

The following algebra $\dahc_{A_{n-1}}$ was introduced in
\cite{W1} under the notation $\mathcal A_u$. We recall it for
convenience and usage in the subsequent subsections. For $x,y$ in
an algebra $A$, we denote as usual that
$$[x,y] =xy-yx \in A.
$$
\begin{definition}
Let $u\in \C$ and $W =W_{A_{n-1}} \equiv S_n$. The rational
double affine Hecke-Clifford algebra (DaHCa) of type $A_{n-1}$,
denoted by $\dahc_W$ or $\dahc_{A_{n-1}}$, is the algebra
generated by $x_i, y_i, c_i$, $1\le i \le n$ and $W$, subject to
the relation (\ref{clifford}) among $c_i$'s and the following
relations (where we identify $\h^* =\C x_1+\cdots+\C x_n$ and $\h
=\C y_1+\cdots+\C y_n$):
\begin{align}
%
%
%
 x_{i}x_{j}& =x_{j}x_{i}, \; y_{i}y_{j}=y_{j}y_{i}, \;
 y_i c_j =c_j y_i \quad (\forall i\; \forall j) \label{comm} \\
x_{i}c_{i} & =-c_{i}x_{i},\text{ }x_{i}c_{j}=c_{j}x_{i}
\quad (i\neq j) \label{commCl} \\
wxw^{-1} & =w(x)\quad (\forall x\in\h^* \quad\forall w\in W) \label{conj-x}\\
wyw^{-1} & =w(y)\quad(\forall y\in\h \quad\forall
w\in W) \label{conj-y} \\
wcw^{-1} & =w(c)\quad (\forall c\in\mathcal{C}_{n}\quad\forall
w\in W) \label{conj-c} \\
%
%
 \nonumber \\
\lbrack y_{j},x_{i}] & =u (1+c_{j}c_{i})s_{ji} \quad (i\neq j) \label{Ayjxi} \\
\lbrack y_{i},x_{i}] & = -u\sum_{k\neq i} (1+c_{k}c_{i})s_{ki}.
\label{Ayixi}
\end{align}
\end{definition}
Alternatively, we may view $u$ as a formal variable and $\dahc_W$
as a $\C(u)$-algebra. Similar remarks apply to all DaHCa, sDaHa,
and cDaHa introduced in this paper.

\subsubsection{The algebra $\dahc_W$ of type $D_{n}$}

Let $W =W_{D_n}$. Regarding elements in $W$ as even signed
permutations of $1,2,\ldots,n$ as usual, we identify the
generators $s_i \in W$, $1\leq i\leq n-1$, with transposition
$(i,i+1)$, and $s_n \in W$ with the transposition of $(n-1,n)$
coupled with the sign changes at $n-1,n$. For $1\le i\neq j\le n$,
we denote by $s_{ij} \equiv (i,j) \in W$ the transposition of $i$
and $j$, and $\overline{s}_{i j} \equiv \overline{(i,j)} \in W$
the transposition of $i$ and $j$ coupled with the sign changes at
$i,j$. By convention, we have
$$
\overline{s}_{n-1,n}\equiv \overline{(n-1,n)}=s_n,\quad
\overline{s}_{ij}\equiv \overline{(i,j)}
=s_{jn}s_{i,n-1}s_{n}s_{i,n-1}s_{jn}.
$$

\begin{definition}
Let $u\in\C$ and $W =W_{D_n}$. The rational double affine
Hecke-Clifford algebra of type $D_{n}$, denoted by $\dahc_W$ or
$\dahc_{D_n}$, is the algebra generated by $x_i, y_i, c_i$, $1\le i
\le n$ and $W$, subject to the relation (\ref{clifford}) among
$c_i$'s, (\ref{comm}--\ref{conj-c}) with the current $W$, and
(\ref{Dyjxi}--\ref{Dyixi}) below:
\begin{align}
%
\lbrack y_{j},x_{i}] & =u\left( (1+c_{j}c_{i})s_{ij}-(1-c_{j}
c_{i})\overline{s}_{ij}\right) \quad (i\neq j) \label{Dyjxi} \\
\lbrack y_{i},x_{i}] & = -u\sum_{k\neq i}\big (
(1+c_{k}c_{i})s_{ki}+(1-c_{k} c_{i})\overline{s}_{ki} \big ).
\label{Dyixi}
\end{align}
\end{definition}


\subsubsection{The algebra $\dahc_W$ of type $B_{n}$}

Let $W =W_{B_n}$. We identify $W$ as usual with the signed
permutations on $1, \ldots, n$. Regarding $W_{D_n}$ as a subgroup of
$W$, we have $s_{ij}, \overline{s}_{ij} \in W$ for $1 \le i\neq j
\le n$. Further denote $\tau_{i} \equiv \overline{(i)} \in W$ the
sign change at $i$ for $1\le i \le n$. By definition, we have
$$
\tau_n \equiv \overline{(n)} =s_n, \quad \tau_{i} \equiv
\overline{(i)} =s_{in}s_{n}s_{in}.
$$

\begin{definition}
Let $u,v\in\C$, and $W = W_{B_n}$. The rational double affine
Hecke-Clifford algebra of type $B_{n}$, denoted by $\dahc_W$ or
$\dahc_{B_n}$, is the algebra generated by $x_i, y_i, c_i$, $1\le
i \le n$ and $W$, subject to the relations (\ref{clifford}) for
$c_i$'s, (\ref{comm}--\ref{conj-c}) with the current $W$, and
(\ref{Byjxi}--\ref{Byixi}) below:
\begin{align}
%
\lbrack y_{j},x_{i}] & =u\big( (1+c_{j}c_{i})s_{ij}-(1-c_{j}
c_{i})\overline{s}_{ij}\big) \quad (i\neq j) \label{Byjxi} \\
\lbrack y_{i},x_{i}] & = -u\sum_{k\neq i}
 \big((1+c_{k}c_{i})s_{ki}+(1-c_{k}%
c_{i})\overline{s}_{ki} \big)- \sqrt{2} v\tau_{i}. \label{Byixi}
\end{align}
\end{definition}
When it is necessary to indicate the dependence of the algebra
$\dahc_W$ on $u$ and $v$, we will write $\dahc_W(u,v)$ for
$\dahc_W$.

\begin{remark}
The factor $\sqrt{2}$ in (\ref{Byixi}) is inserted to make the
definition of $\dahc_{B_n}$ compatible with the notion of sDaHa
$\sdaha_{B_n}$ below under a Morita super-equivalence $\Phi$ (cf.
Theorem~\ref{th:isomDBdaha}).
\end{remark}

\subsection{The PBW basis for $\dahc_W$}

For any classical Weyl group $W$, the algebra $\dahc_W$ is a
superalgebra by letting elements of $W$ and $x_i, y_i$ for all $i$
be even, and each $c_i$ be odd.


\begin{theorem} \label{PBW:DBdaha}
Let $W$ be $W_{A_{n-1}}$, $W_{D_n}$ or $W_{B_n}$. The multiplication
of the subalgebras $\C[\h^*], \C[\h]$, $\Cl_{n}$, and $\C W$ induces
a vector space isomorphism
\[
\C[\h^*] \otimes\Cl_n \otimes\C W \otimes \C[\h]
\stackrel{\simeq}{\longrightarrow} \dahc_W.
\]
Equivalently, the elements $\{x^\al c^\epsilon w y^\gamma| \al,
\gamma \in\Z_{+}^n, \epsilon \in\Z_2^n, w\in W\}$ form a linear
basis for $\dahc_W$ (the PBW basis).
\end{theorem}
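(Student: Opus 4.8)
The plan is to use the standard "spanning plus freeness" argument, where freeness is proved by exhibiting a faithful representation on the candidate PBW space. First I would check that the monomials $x^\al c^\epsilon w y^\gamma$ span $\dahc_W$ as a vector space. This is the easy half: using the defining relations one can move any generator past any other at the cost of lower-order or equal-order terms. Concretely, the relations (\ref{clifford}), (\ref{commCl}), (\ref{conj-x})--(\ref{conj-c}) let one bring all $x_i$'s to the left, all $c_i$'s next, then all group elements, then all $y_i$'s to the right; the only relations that do not simply permute generators are the commutators (\ref{Ayjxi})--(\ref{Ayixi}) (resp. (\ref{Dyjxi})--(\ref{Dyixi}), (\ref{Byjxi})--(\ref{Byixi})), and each of these expresses $[y_j,x_i]$ as an element of $\Cl_n \rtimes \C W$, i.e. something of strictly lower degree in the $x$'s and $y$'s. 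A straightforward induction on the total $x$-degree-plus-$y$-degree then shows the asserted monomials span.

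The harder half is linear independence. For this I would construct the \emph{Dunkl operator representation}: let $M = \C[\h^*] \otimes \Cl_n \otimes \C W$ (or equivalently $\C[x_1,\dots,x_n]$-valued functions tensored with $\Cl_n \rtimes \C W$) and let $x_i$ act by multiplication, let $c_i$ and $w \in W$ act via the natural module structure of $\Cl_n \rtimes \C W$ on itself (twisting the polynomial part by the $W$-action and the Clifford sign rule), and let $y_i$ act by an explicit Dunkl-type operator: a $W$-equivariant combination of $\partial/\partial x_i$ with difference terms $\tfrac{1}{x_i - x_j}(1 + c_ic_j)s_{ij}$, etc., mirroring exactly the right-hand sides of the $[y_j,x_i]$ relations. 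One must verify that these operators satisfy all the defining relations of $\dahc_W$ — in particular that the $y_i$'s commute among themselves and that the mixed commutators $[y_j,x_i]$ reproduce (\ref{Ayjxi})--(\ref{Ayixi}) (resp. the $D_n$ and $B_n$ versions). Granting this, $M$ becomes a $\dahc_W$-module; applying the monomial $x^\al c^\epsilon w y^\gamma$ to the cyclic vector $1 \otimes 1 \otimes 1$ produces elements whose leading terms (in $x$-degree, then reading off the $\Cl_n \otimes \C W$ component) are linearly independent, forcing the monomials themselves to be linearly independent in $\dahc_W$. Combined with the spanning statement, this gives the claimed vector-space isomorphism.

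The main obstacle, and the part that genuinely requires work, is proving that the Dunkl operators $y_i$ are well-defined on $M$ and satisfy the required relations — equivalently, that the defining relations of $\dahc_W$ are \emph{consistent}. The subtlety is exactly the one flagged in the introduction: one needs the mutual commutativity $[y_i,y_j]=0$ and the compatibility of the $c$-$x$-$s$ cross terms, which for types $D_n$ and $B_n$ involves the barred reflections $\sijbar$ and, for $B_n$, the sign changes $\taui$, together with the signs $1 \pm c_jc_i$. Verifying $[y_i,y_j]=0$ amounts to a case-by-case check that the "residue" contributions from triples and quadruples of indices cancel, using the Clifford relations (\ref{clifford}) and the braid/commutation relations in $W$; this is the "lengthy but elementary case-by-case verification" the authors refer to. I would organize this as a sequence of lemmas — one establishing that the operators preserve $M$, one for $[y_i,y_j]=0$, and one for each family of mixed commutators — and relegate the brute-force identities to an appendix, exactly as the paper signals it will do in Section~\ref{sec:Appendix}. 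Once consistency is in hand, the faithfulness of $M$ and hence the PBW basis follow by the leading-term argument above, uniformly in all three types.
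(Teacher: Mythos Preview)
Your spanning argument is fine, and the overall shape (span plus faithfulness via an explicit module) is reasonable in principle, but the particular module you propose does not do the job. The relations (\ref{Ayixi}), (\ref{Dyixi}), (\ref{Byixi}) contain no term $t\cdot 1$: there is no ``$t$'' parameter in $\dahc_W$, and the paper remarks explicitly in the Introduction that these algebras do not seem to admit one naturally. Consequently the Dunkl operators realizing $y_i$ on $\C[x_1,\ldots,x_n]\otimes K$ carry \emph{no} term $\partial/\partial x_i$; see the formulas in Section~\ref{sec:daHCaDunkl}. If you insert $\partial/\partial x_i$ as you propose, then $[y_i,x_i]$ picks up a constant $1$ and the relations (\ref{Ayixi})--(\ref{Byixi}) fail. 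If you omit it, then every $y_i$ annihilates constants, so $y^\gamma\cdot(1\otimes 1\otimes 1)=0$ for every $\gamma\neq 0$, and your cyclic-vector/leading-term argument collapses: all monomials with nontrivial $y$-part are sent to zero. The Dunkl module alone does not separate PBW monomials in this $t=0$ regime.

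The paper avoids this trap by not using the Dunkl representation for PBW at all. It follows the Etingof--Ginzburg strategy: one observes that $K=\Cl_n\rtimes\C W$ is semisimple, equips $E=(\h^*\oplus\h)\otimes K$ with a suitable $K$-bimodule structure (the Clifford generators act on the $x$-part with a sign $(-1)^{\delta_{ij}}$, which is the key modification over \cite{EG} coming from \cite{W1}), and then PBW reduces to three algebraic checks: invariance of the bracket relations under conjugation by each $c_l$ (Lemma~\ref{conj-inv-c}), invariance under conjugation by $W$ (Lemma~\ref{conj-inv-W}), and the Jacobi identities among the $x_i$'s and $y_i$'s (Lemma~\ref{Jacobi}). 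These are the ``lengthy but elementary'' verifications, done in the Appendix. The Dunkl operators in Section~\ref{sec:daHCaDunkl} are derived \emph{afterwards}, as a consequence of PBW, not as its proof. If you want to salvage a representation-theoretic proof, you would need either to produce a genuine $t$-deformation of $\dahc_W$ (which does not obviously exist) or to argue faithfulness of the Dunkl module by something more refined than a single cyclic vector; either route is substantially harder than the EG argument the paper uses.
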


\begin{proof}
Recall that $W$ acts diagonally on $V = \h^{*} \oplus\h$. The
strategy of proving the theorem is similar as \cite[Proof of
Th.~1.3]{EG} with one crucial modification as first observed in
\cite{W1}.

Clearly $K := \Cl_n \rtimes \C W$ is a semisimple algebra. Observe
that $E := V \otimes_{\C} K$ is a natural $K$-bimodule (even
though $V$ is not) with the right $K$-module structure on $E$
given by right multiplication and the left $K$-module structure on
$E$ by letting
\begin{eqnarray*}
%
w. (v\otimes a) &=& v^{w}\otimes wa\\
c_i. (x_j \otimes a) &=& (-1)^{\delta_{i j}} x_j \otimes (c_i
a) \\
c_i. (y_j \otimes a) &=& y_j \otimes (c_i a).
\end{eqnarray*}
where $v\in V$, $w\in W$, $a\in K$.

The rest of the proof can proceed in the same way as in
\cite[Proof of Th.~1.3]{EG}. It boils down to the verifications of
Lemmas~\ref{conj-inv-c}, \ref{conj-inv-W} and \ref{Jacobi} below,
on the conjugation invariance (by $c_i$ and $W$) of the defining
relations (\ref{Ayjxi}--\ref{Ayixi}), (\ref{Dyjxi}--\ref{Dyixi}),
or (\ref{Byjxi}--\ref{Byixi}) for type $A, D$ or $B$ respectively,
and on the Jacobi identities among the generators $x_i$'s and
$y_i$'s.
\end{proof}

\begin{remark}
Note that $\Cl_n \rtimes \C W$ is actually a subalgebra of $\dahc_W$
and the tensor product in the above theorem indicates that $\dahc_W$
has the structure of an algebra with triangular decomposition:
$$
\dahc_W \cong \C[\h^*] \otimes (\Cl_n \rtimes\C W) \otimes \C[\h].
$$
\end{remark}

The verifications of Lemmas~\ref{conj-inv-c}, \ref{conj-inv-W} and
\ref{Jacobi} below are postponed to the Appendix.
\begin{lemma} \label{conj-inv-c}
Let $W=W_{A_{n-1}}, W_{D_n}$ or $W_{B_n}$. Then the relations
(\ref{Ayjxi}--\ref{Ayixi}), (\ref{Dyjxi}--\ref{Dyixi}), or
(\ref{Byjxi}--\ref{Byixi}) are invariant under the conjugation by
$c_i$, $1\le i \le n$.
\end{lemma}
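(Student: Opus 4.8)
The plan is to verify, for each of the three types $A_{n-1}$, $D_n$, $B_n$, that conjugating both sides of the commutator relations (\ref{Ayjxi})--(\ref{Ayixi}), (\ref{Dyjxi})--(\ref{Dyixi}), (\ref{Byjxi})--(\ref{Byixi}) by a fixed generator $c_\ell$ of $\Cl_n$ yields an identity that is already a consequence of the defining relations. The key observation is that conjugation by $c_\ell$ is an (inner) automorphism of the subalgebra $\Cl_n \rtimes \C W$, and it acts in a very simple way: from (\ref{clifford}) we have $c_\ell c_i c_\ell^{-1} = c_i$ if $i=\ell$ and $-c_i$ if $i\neq\ell$, so $c_\ell (c_j c_i) c_\ell^{-1} = c_j c_i$ for \emph{every} pair $i,j$ (the two sign flips cancel, and for $i=\ell$ or $j=\ell$ there is no flip at all). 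Meanwhile, conjugation by $c_\ell$ fixes each $x_i, y_i$ up to the sign dictated by (\ref{commCl}): $c_\ell x_i c_\ell^{-1} = -x_i$ if $i=\ell$, else $x_i$; and $c_\ell y_i c_\ell^{-1} = y_i$ for all $i$ by (\ref{comm}). Finally I must record how $c_\ell$ conjugates the various reflections $s_{ij}$, $\overline s_{ij}$, $\tau_i$ sitting inside $\dahc_W$; this is governed by (\ref{conj-c}), i.e. by the action of $W$ on $\h=\C^n$, which permutes and sign-changes the $e_i$ and hence the $c_i$.

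The computation then proceeds relation by relation. Take (\ref{Ayjxi}) with $i\neq j$: the left side $[y_j,x_i]$ conjugates to $c_\ell[y_j,x_i]c_\ell^{-1} = \pm[y_j,x_i]$, with the sign being $-1$ precisely when $\ell=i$ (since $y_j$ is unaffected and $x_i$ picks up a sign only for $\ell=i$), and $+1$ otherwise. On the right side, $c_\ell(1+c_jc_i)s_{ji}c_\ell^{-1} = (1+c_jc_i)\, (c_\ell s_{ji} c_\ell^{-1})$ because $c_\ell$ commutes with $1+c_jc_i$ as noted above; and $c_\ell s_{ji} c_\ell^{-1}$ equals $s_{ji}$ conjugating the $c$'s: but $s_{ji}$ in $\Cl_n\rtimes\C W$ already satisfies $s_{ji} c_\ell = c_{s_{ji}(\ell)} s_{ji}$, and one checks $c_\ell s_{ji} c_\ell^{-1} = s_{ji}$ when $\ell\notin\{i,j\}$ and $= -s_{ji}$... — here one must be careful: $s_{ji}$ is \emph{even}, and $c_\ell s_{ji} c_\ell = c_\ell c_{s_{ji}(\ell)} s_{ji} c_\ell / c_\ell^2$; tracking the signs shows $c_\ell s_{ji} c_\ell^{-1} = s_{ji}$ unless exactly one of $\ell = i$, $\ell = j$ holds, in which case $c_i$ and $c_j$ get swapped with a relative sign, giving $-s_{ji}$. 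Matching: when $\ell = i$, left side gets $-1$ and right side gets $-1$; when $\ell = j$, left side gets $+1$ but the factor $c_jc_i$ absorbs the sign from $c_\ell s_{ji}c_\ell^{-1}=-s_{ji}$ together with... — so the upshot is that both sides transform with the same overall sign, and the relation is invariant. The diagonal relation (\ref{Ayixi}) is handled the same way term by term in the sum over $k\neq i$. For types $D$ and $B$ the only new feature is the presence of $\overline s_{ij}$ (and $\tau_i$ for $B$); since $\overline s_{ij}$ acts on $\h$ by a signed transposition, $c_\ell \overline s_{ij} c_\ell^{-1}$ again differs from $\overline s_{ij}$ only by an overall sign that depends on whether $\ell\in\{i,j\}$, and the companion Clifford factor $1-c_jc_i$ (resp. the factor in the $\tau_i$ term, which has no $c$'s) absorbs or matches it correctly. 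The factor $1 - c_j c_i$ is fixed by $c_\ell$ for the same cancellation reason, and $\sqrt2\, v\,\tau_i$ conjugates to $\pm\sqrt2\,v\,\tau_i$ with the sign $-1$ exactly when $\ell = i$, matching the $-1$ on $[y_i,x_i]$.

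Concretely, the steps I would carry out are: (i) record the three conjugation rules $c_\ell(\cdot)c_\ell^{-1}$ on $c_i$, on $x_i$, on $y_i$, and derive $c_\ell(c_jc_i)c_\ell^{-1}=c_jc_i$ for all $i,j$; (ii) compute $c_\ell s_{ij} c_\ell^{-1}$, $c_\ell \overline s_{ij} c_\ell^{-1}$, $c_\ell \tau_i c_\ell^{-1}$ in $\dahc_W$ using (\ref{conj-c}) and (\ref{clifford}), expressing each as a scalar $\pm1$ times the original reflection, with the scalar an explicit function of whether $\ell$ lies in the support of the reflection; (iii) apply these to each defining relation and check the scalars balance. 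I expect step (ii) — pinning down the precise signs $c_\ell s_{ij} c_\ell^{-1} = (\pm 1) s_{ij}$, etc., and confirming they always conspire with the parity of the $x$-side and of the Clifford coefficient — to be the only place requiring genuine care; it is elementary but sign-sensitive, which is exactly why the paper defers it to the Appendix. Once the sign bookkeeping in (ii) is nailed down, the invariance of every relation in every type follows by direct substitution with no further ideas needed.
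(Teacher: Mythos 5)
There is a genuine error at the heart of the proposal, in the "key observation" that you rely on and in the companion conjugation rule for reflections.

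You claim that $c_\ell(c_jc_i)c_\ell^{-1} = c_jc_i$ for \emph{every} pair $i,j$ and every $\ell$, arguing that "the two sign flips cancel, and for $i=\ell$ or $j=\ell$ there is no flip at all." This is false precisely when $\ell$ matches one of the two indices. Since $c_\ell^{-1}=c_\ell$, if $\ell=i\neq j$ then
\[
c_i\,(c_jc_i)\,c_i \;=\; c_i c_j \;=\; -\,c_j c_i,
\]
so the conjugate flips sign. The same happens for $\ell=j$. Your reasoning tacitly treats $c_\ell$ as commuting past both factors but forgets that with only one flip the sign does not cancel. Your second claim, that $c_\ell s_{ij}c_\ell^{-1}=(\pm 1)s_{ij}$ with the $-1$ occurring when $\ell\in\{i,j\}$, is also wrong: using relation (\ref{conj-c}), for $\ell=i$ one has $s_{ij}c_i=c_j s_{ij}$, hence
\[
c_i\, s_{ij}\, c_i \;=\; c_i c_j\, s_{ij},
\]
which is a Clifford element times $s_{ij}$, not a scalar multiple. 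The same caveat applies to $\overline s_{ij}$ and $\tau_i$. So step (ii) of your plan—"express each $c_\ell s_{ij}c_\ell^{-1}$ as a scalar $\pm 1$ times the original reflection"—cannot be carried out as stated.

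The two errors happen to compensate in the single case you work out ($\ell=i$ on the term $(1+c_jc_i)s_{ij}$), which may have masked the problem, but they do not compensate in general: for $\ell=j$ your rules predict $c_j(1+c_jc_i)s_{ij}c_j=-(1+c_jc_i)s_{ij}$, whereas the correct value is $+(1+c_jc_i)s_{ij}$ (since $(1-c_jc_i)(c_jc_i)=1+c_jc_i$), and indeed the left side $c_j[y_j,x_i]c_j=[y_j,x_i]$ has a $+$ sign. What actually makes the lemma true is that, although $c_\ell(1\pm c_jc_i)c_\ell$ and $c_\ell s_{ij}c_\ell$ individually fail to be scalar multiples of the original factors, their \emph{product} $(c_\ell(1\pm c_jc_i)c_\ell)\,(c_\ell c_{s_{ij}(\ell)})$ collapses to $\pm(1\pm c_jc_i)$ because the stray Clifford factor $c_\ell c_{s_{ij}(\ell)}$ is absorbed by the $(1\pm c_jc_i)$ term. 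You have to conjugate each whole summand $(1\pm c_jc_i)s_{ij}$, $(1\pm c_jc_i)\overline s_{ij}$, $\tau_i$ as a unit—which is exactly what the paper's Appendix computation does—rather than factor the conjugation through the Clifford coefficient and the reflection separately.
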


\begin{lemma} \label{conj-inv-W}
The relations (\ref{Ayjxi}--\ref{Ayixi}),
(\ref{Dyjxi}--\ref{Dyixi}), or (\ref{Byjxi}--\ref{Byixi})
respectively are invariant under the conjugation by elements in
$W_{A_{n-1}}, W_{D_n}$ or $W_{B_n}$ respectively.
\end{lemma}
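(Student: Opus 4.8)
The plan is to check, for each simple reflection $s_k \in W$, that conjugating both sides of the relevant commutator relation by $s_k$ yields an identity which is again a consequence of the same family of relations. Since the $x_i$'s span $\h^*$ and the $y_i$'s span $\h$, and $s_k$ acts on these by the (signed) permutation action given in \eqref{conj-x}--\eqref{conj-y}, conjugation by $s_k$ carries the bracket $[y_j, x_i]$ to $[y_{s_k(j)}, x_{s_k(i)}]$ up to signs coming from sign changes; on the right-hand side it carries $c_j c_i$ to $c_{s_k(j)} c_{s_k(i)}$ (again up to the sign $s_k$ acts by on the relevant $c$'s, via \eqref{conj-c}) and conjugates the group elements $s_{ki}$, $\overline s_{ki}$, $\tau_i$. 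The first step is therefore to record the conjugation action of a simple reflection on the group elements appearing on the right-hand sides: one has $s_k s_{ij} s_k^{-1} = s_{s_k(i), s_k(j)}$, $s_k \overline s_{ij} s_k^{-1} = \overline s_{s_k(i), s_k(j)}$ for type $D$, and additionally $s_k \tau_i s_k^{-1} = \tau_{s_k(i)}$ for type $B$; these are immediate from the description of $W$ as (even) signed permutations.

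Next I would match signs. The key observation is that the sign picked up by $x_i$ (or $y_i$) under conjugation by $s_k$ is exactly the sign picked up by $c_i$ under the same conjugation — because both the reflection representation $\h$ and the Clifford generators transform through the same orthogonal $W$-action on $\h$. Concretely, for type $D_n$ and $B_n$, conjugation by $s_n$ (resp.\ by $\tau_\ell$) sends $x_{n-1} \mapsto -x_n$, $x_n \mapsto -x_{n-1}$ and $c_{n-1} \mapsto -c_n$, $c_n \mapsto -c_{n-1}$ (resp.\ sends $x_\ell \mapsto -x_\ell$, $c_\ell \mapsto -c_\ell$), so in the product $(1 \pm c_j c_i) s_{ij}$ the two minus signs on $c_j$ and $c_i$ cancel, while a minus on exactly one index turns $(1+c_jc_i)s_{ij}$ into $(1 - c_{j'} c_{i'}) \overline s_{i'j'}$ or vice versa — precisely the interchange of the two terms that already appears on the right-hand side of \eqref{Dyjxi}. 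So the two pieces of the right-hand side get permuted into each other in exactly the way needed. For the diagonal relation \eqref{Dyixi}, \eqref{Byixi} one additionally checks that the summation index $k \ne i$ is simply relabelled $s_n(k) \ne s_n(i)$, which is a bijection of the index set, and that the $\tau_i$ term in type $B$ transforms correctly (note $\tau_i$ is even, so no extra sign from the superalgebra structure).

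The bookkeeping is cleanest if one treats separately (i) conjugation by $s_k = s_{k,k+1}$ with $1 \le k \le n-1$, which is an honest transposition with no sign changes, reducing the check to the $S_n$-equivariance already handled by the type $A$ verification in \cite{W1}, and (ii) conjugation by the "extra" generator $s_n$ in types $D_n$, $B_n$. I expect step (ii), and in particular correctly tracking the sign interchange between the $s_{ij}$-term and the $\overline s_{ij}$-term together with the $(1 \pm c_j c_i)$ factors, to be the main point; once that single case is done, all other simple reflections are $W_{D_n}$-conjugate or $W_{B_n}$-conjugate to $s_n$ or lie in $S_n$, so no separate argument is needed. There is no deep obstacle here, only the need to be careful that the orthonormal-basis normalization makes the $W$-action on $\{c_i\}$ literally the signed-permutation action, so that signs on $x$'s and $c$'s always cancel in pairs; this is exactly why the relations were written with the combinations $1 \pm c_j c_i$ rather than, say, $c_j - c_i$.
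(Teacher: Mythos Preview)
Your proposal is correct and follows essentially the same approach as the paper's proof: a direct case-by-case verification that conjugation by each generator of $W$ carries the commutator relations to themselves, with the key step being the interchange of the $(1+c_jc_i)s_{ij}$ and $(1-c_jc_i)\overline s_{ij}$ terms under a single sign change. The paper checks conjugation by arbitrary transpositions $s_{lk}$ and then by $s_n$, whereas you restrict to simple reflections (which suffices since they generate $W$) and phrase the sign-matching more conceptually; but the substance of the verification is the same.
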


\begin{lemma} \label{Jacobi}
Let $W=W_{A_{n-1}}, W_{D_n}$ or $W_{B_n}$. Then the Jacobi
identity holds for any triple among $x_i, y_i$, $1\le i \le n$ in
$\dahc_W$.
\end{lemma}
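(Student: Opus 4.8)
The plan is to verify the Jacobi identity
$$
[[a,b],c] + [[b,c],a] + [[c,a],b] = 0
$$
for every triple $a,b,c$ taken from $\{x_i, y_j\}$, $1 \le i,j \le n$. Since $[x_i,x_j]=0$ and $[y_i,y_j]=0$ by (\ref{comm}), the only nontrivial cases involve either two $x$'s and one $y$, or one $x$ and two $y$'s. By the symmetry of the defining relations under the formal swap $x \leftrightarrow y$ (more precisely, the relations (\ref{Ayjxi}--\ref{Ayixi}) etc.\ are, up to sign and the structural substitution $s_{ki}\mapsto s_{ki}$, invariant under interchanging the roles of the $x$'s and $y$'s), it suffices to treat, say, the triples $(x_i, x_j, y_k)$; the triples $(y_i, y_j, x_k)$ are handled by an essentially identical computation. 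So the real content is: for fixed $k$ and arbitrary $i,j$, show
$$
[[x_i, x_j], y_k] + [[x_j, y_k], x_i] + [[y_k, x_i], x_j] = 0,
$$
and since $[x_i,x_j]=0$ this reduces to showing $[[y_k,x_i],x_j]$ is symmetric in $i$ and $j$, i.e.
$$
\big[\,[y_k,x_i]\,,\,x_j\,\big] = \big[\,[y_k,x_j]\,,\,x_i\,\big].
$$

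First I would split into the subcases according to how $k$ relates to $i$ and $j$: (1) $k,i,j$ all distinct; (2) $k=i\ne j$ (and symmetrically $k=j\ne i$); (3) $i=j$ (with $k$ either equal or not — but if $i=j=k$ the identity is trivially $0=0$, so only $i=j\ne k$ matters, which is the same as case (2) after relabeling). In each subcase I substitute the explicit formula for $[y_k, x_\bullet]$ from (\ref{Ayjxi})--(\ref{Ayixi}) for type $A$, from (\ref{Dyjxi})--(\ref{Dyixi}) for type $D$, and from (\ref{Byjxi})--(\ref{Byixi}) for type $B$, obtaining an element of $\Cl_n\rtimes\C W$ of the form $\sum (\text{sign factor})(1\pm c_a c_b) w$ with $w$ a reflection. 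I then commute the remaining $x_j$ (resp.\ $x_i$) past this expression using (\ref{commCl}) to move it through the Clifford part and (\ref{conj-x}) to move it through $w\in W$, producing $x_j - w(x_j)$-type terms times Clifford elements. The claim becomes a finite identity in $\C[\h^*]\otimes(\Cl_n\rtimes\C W)$, which I verify by matching coefficients of each group element $w$ and each monomial $x_\ell$. The $\sqrt{2}v\tau_i$ term in type $B$ contributes only when $k\in\{i,j\}$ and must be checked to cancel correctly against its partner; the mixed reflections $s_{ab}$ and $\overline{s}_{ab}$ in types $B, D$ require care because $\overline{s}_{ab}$ negates two coordinates and hence $\overline{s}_{ab}(x_\ell) = -x_\ell$ for $\ell\in\{a,b\}$, which flips the sign of the corresponding commutator term.

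The main obstacle I anticipate is the bookkeeping in case (1) with all of $i,j,k$ distinct for types $B$ and $D$: here $[y_k,x_i]$ is a sum of two reflection terms $(1+c_kc_i)s_{ik} - (1-c_kc_i)\overline{s}_{ik}$, and conjugating/commuting a second variable $x_j$ — with $j$ distinct from both — through each piece generates cross terms involving $s_{ik}(x_j)=x_j$ (no change, since $j\notin\{i,k\}$) versus $\overline{s}_{ik}(x_j)=x_j$ as well, so actually many cross terms vanish; the surviving contributions come precisely from when the second commutator hits $[y_k, x_j]$, and one must check the Clifford sign factors $(1+c_kc_i)$ vs.\ $(1+c_kc_j)$ interact correctly under the anticommutation $c_ic_j=-c_jc_i$. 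This is exactly the ``crucial modification first observed in \cite{W1}'': the appearance of the idempotent-like factors $\tfrac12(1+c_kc_i)$ is what makes the Jacobi identity close, and verifying it is a direct but lengthy case-by-case computation, which is why, as stated, we defer the details to the Appendix. The same remark applies to Lemmas~\ref{conj-inv-c} and \ref{conj-inv-W}, whose verification is of the same elementary character: one checks that conjugating the right-hand sides of (\ref{Ayjxi})--(\ref{Byixi}) by $c_i$ or by a Coxeter generator reproduces the conjugation of the left-hand sides $[y_\bullet, x_\bullet]$, using (\ref{commCl}), (\ref{conj-x})--(\ref{conj-c}), and the fact that conjugation by $c_i$ acts on $1+c_jc_i$ by the sign $(-1)^{\delta_{ij}+\delta_{ik}}$ while simultaneously fixing the reflections $s_{jk}\in W$.
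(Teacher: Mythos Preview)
Your overall plan (reduce to triples with mixed $x$'s and $y$'s, then split by the index pattern and push variables through the reflection/Clifford terms) is exactly what the paper's Appendix proof does. But your reduction step contains a genuine error: there is \emph{no} $x\leftrightarrow y$ symmetry in $\dahc_W$. The relations (\ref{comm})--(\ref{commCl}) are asymmetric---every $y_i$ commutes with every $c_j$, whereas $x_ic_i=-c_ix_i$. The paper says this explicitly in Section~\ref{sec:daHCaDunkl}: ``In contrast to the usual DaHa $\daha_W$, the DaHCa $\dahc_W$ has no automorphism which switches the subalgebras $\C[\mathfrak h]$ and $\C[\mathfrak h^*]$.'' You can also see it directly: from (\ref{Ayjxi}) we have $[y_i,x_j]=u(1+c_ic_j)s_{ij}=u(1-c_jc_i)s_{ji}$, which is not $\pm[y_j,x_i]$. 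So the ``essentially identical computation'' remark is unjustified, and the two families of triples $(x_i,x_j,y_k)$ and $(y_i,y_j,x_k)$ must be checked separately.

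This matters concretely in the calculation. In the two-$y$'s-one-$x$ case, when you commute $y_i$ past $(1+c_jc_i)s_{ji}$ the Clifford factor is inert and only the group element moves $y_i$; in the two-$x$'s-one-$y$ case, commuting $x_i$ past $(1+c_ic_j)s_{ij}$ flips the Clifford factor to $(1-c_ic_j)$ before you even reach the group element. The paper's Appendix handles both cases in turn (for type $B$, with the remark that types $A,D$ are obtained by dropping terms), and the two computations visibly differ in exactly this way. Your case breakdown and commutation strategy are fine; you just need to run it twice rather than once, and actually write out the cancellations rather than deferring them.
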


%
%

\section{The Dunkl Operators for DaHCa}
\label{sec:daHCaDunkl}

\subsection{The Dunkl representations}

The algebra $\dahc_W$ is a superalgebra by letting elements of $W$
and $x_i, y_i$ for all $i$ be even, and each $c_i$ be odd. Recall
that $\dahc_W$ admits the triangular decomposition:
$$
\dahc_W \cong \C[\h^*] \otimes K \otimes \C[\h]
$$
where we have denoted
$$
K = \Cl_n \rtimes \C W.
$$
In contrast to the usual DaHa $\daha_W$, the DaHCa $\dahc_W$ has
no automorphism which switches the subalgebras $\C [\mathfrak h]$
and $\C [\mathfrak h^*]$. Denote by $\aH_x$ and $\aH_y$ the
subalgebras of $\dahc_W$ generated by $K$ and $x_1,\ldots,x_n$,
and generated by $K$ and $y_1,\ldots,y_n$ respectively.

A $K$-module $M$ can be extended to either $\aH_x$-module
or $\aH_y$-module by demanding the action of $x_i$'s and $y_i$'s
to be trivial respectively. We define
$$
M_x:=\text{Ind}_{\aH_x}^{\dahc_W} M, \quad M_y :=
\text{Ind}_{\aH_y}^{\dahc_W} M.
$$
Below we will always use the following identification of vector
spaces:
\begin{eqnarray*}
M_x &=& \C[y_1,\ldots,y_n]\otimes M, \\
M_y &=& \C[x_1,\ldots,x_n]\otimes M.
\end{eqnarray*}
Then the action of $\dahc_W$ on $M_x$ (resp. $M_y$) is transferred
to $\C[y_1,\ldots,y_n]\otimes M$ (resp. $\C[x_1,\ldots,x_n]\otimes
M$) as follows. On $\C[y_1,\ldots,y_n]\otimes M$, $K$ acts
diagonally. More explicitly, K acts on $\C[x_1,\ldots,x_n] \otimes M$ by
\begin{align*}
w. (x_j\otimes m) &= x_j^{w}\otimes w m\\
c_i. (x_j \otimes m) &= (-1)^{\delta_{i j}} x_j \otimes c_i m
\end{align*} where $c_i \in \Cl_n$, $w \in W$. Moreover, $y_i$ acts by left multiplication in the first tensor factor, and the action of $x_i$ is given by the so-called Dunkl operators (which are generalizations of \cite{Dun}). Similarly, on $\C[x_1,\ldots,x_n]\otimes M$, $x_i$ acts by left multiplication, and $y_i$ acts by another version of Dunkl operators. In the remainder of this section we shall describe these Dunkl operators
explicitly.

\begin{remark}
A canonical choice for a $K$-module is $\Cl_n$, whose
$K$-module structure is defined by letting $\Cl_n$ act by left
multiplication and $W$ act as usual (cf. Section~\ref{subsec:iso}).
\end{remark}

\subsection{The Dunkl Operators for $\dahc_{A_{n-1}}$}

We first prepare a few lemmas. We shall denote the action of
$\sigma\in W$ on $\C[\h]$ and $\C[\h^*]$ by $f \mapsto f^\sigma$.

\begin{lemma} \label{Acomm:[y,x^l]}
Let $W=W_{A_{n-1}}$. Then the following holds in $\dahc_W$ for $l \in \Z_+$ and $i\neq j$:
\begin{eqnarray*}
\lbrack y_{i},x_{j}^l] &=& u \left( \frac{x_j^l - x_i^l}{x_j - x_i}
+
\frac{x_j^l - (-x_i)^l}{x_j + x_i}c_i c_j\right)s_{i j}.\\
\lbrack y_{i},x_{i}^l] &=& -u \sum_{k\neq i} \left( \frac{x_i^l -
x_k^l}{x_i - x_k} + \frac{x_i^l - (-x_k)^l}{x_i + x_k}c_k
c_i\right)s_{k i}.
\end{eqnarray*}
It is understood here and in similar ratios of operators below
that $\frac{h}{g} =\frac1{g} \cdot h$.
\end{lemma}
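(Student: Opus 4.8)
The plan is to fix $i\neq j$ and prove both identities by induction on $l\in\Z_+$, using the $l=1$ case, which is exactly the defining relations \eqref{Ayjxi}--\eqref{Ayixi} (note that for $l=1$ the ratio $\frac{x_j-x_i}{x_j-x_i}=1$ and $\frac{x_j+x_i}{x_j+x_i}=1$, matching $[y_i,x_j]=u(1+c_ic_j)s_{ij}$ after rewriting $c_ic_j$; one should be slightly careful that $(1+c_jc_i)s_{ji}=(1+c_jc_i)s_{ij}$ since $s_{ij}=s_{ji}$, and $c_jc_i=-c_ic_j$, so the sign bookkeeping between the statement's $c_ic_j$ and the relation's $c_jc_i$ needs a remark). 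The inductive step writes $x_j^{l} = x_j\cdot x_j^{l-1}$ and applies the Leibniz-type rule
\begin{equation*}
[y_i, x_j\, x_j^{l-1}] = [y_i, x_j]\, x_j^{l-1} + x_j\, [y_i, x_j^{l-1}].
\end{equation*}
The first identity then follows once I know how $s_{ij}$, $c_i$, $c_j$ commute past powers of $x_j$ and $x_i$: from \eqref{commCl}, $c_k$ commutes with $x_m$ for $k\neq m$ and anticommutes with $x_k$, so $c_ic_j\, x_j^{l-1} = (-1)^{l-1} x_j^{l-1}\, c_ic_j$ and $c_ic_j\, x_i^{m} = (-1)^{m} x_i^{m}\, c_ic_j$; and from \eqref{conj-x}, $s_{ij}$ swaps $x_i\leftrightarrow x_j$, so $s_{ij}\, x_j^{l-1} = x_i^{l-1}\, s_{ij}$. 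Pushing everything to the right past $s_{ij}$ (and $c_ic_j$) and collecting terms, the two contributions should telescope to
\begin{equation*}
u\Big(\frac{x_j^{l}-x_i^{l}}{x_j-x_i} + \frac{x_j^{l}-(-x_i)^{l}}{x_j+x_i}\,c_ic_j\Big)s_{ij},
\end{equation*}
using the elementary polynomial identity $\frac{b^{l}-a^{l}}{b-a} = b^{l-1} + a\cdot\frac{b^{l-1}-a^{l-1}}{b-a}$ (and its sign-twisted analogue with $a\mapsto -a$) to combine $[y_i,x_j]\,x_j^{l-1}$ with $x_j$ times the inductive expression.

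For the second identity I proceed in the same spirit, but now the right-hand side is a sum over $k\neq i$, and in the inductive step $[y_i, x_i^{l}] = [y_i, x_i]\,x_i^{l-1} + x_i\,[y_i, x_i^{l-1}]$; here $[y_i,x_i] = -u\sum_{k\neq i}(1+c_kc_i)s_{ki}$ brings in every $k\neq i$ at once. Commuting $s_{ki}$ past $x_i^{l-1}$ turns it into $x_k^{l-1}$, and commuting $c_kc_i$ past $x_i^{l-1}$ produces the sign $(-1)^{l-1}$; then the $k$-th summand assembles, via the same polynomial identity applied with $b=x_i$, $a=x_k$, into $-u\big(\frac{x_i^{l}-x_k^{l}}{x_i-x_k} + \frac{x_i^{l}-(-x_k)^{l}}{x_i+x_k}c_kc_i\big)s_{ki}$, and summing over $k$ gives the claim. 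The one subtlety worth flagging explicitly is that in the first identity's inductive step the term $x_j\cdot\big(\frac{x_j^{l-1}-x_i^{l-1}}{x_j-x_i}\big)$ must be recognized, \emph{after} moving $s_{ij}$ to the far right (which converts an $x_i$ on the left into an $x_j$ acting first), as contributing correctly — i.e. one has to track whether the leading $x_j$ sits to the left of $s_{ij}$ or gets transported; writing $\frac{h}{g}=\frac{1}{g}\cdot h$ as the paper instructs makes this unambiguous, and one checks $x_j\cdot\frac{1}{x_j-x_i} = \frac{x_j}{x_j-x_i}$ since $x_i,x_j$ commute.

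The main obstacle, such as it is, is purely bookkeeping: keeping straight the three independent sign sources — the Clifford anticommutation $c_ic_j=-c_jc_i$, the anticommutation $c_ix_i=-x_ic_i$ from \eqref{commCl} (which is what produces the $(-x_i)^{l}$ rather than $x_i^{l}$ in the Clifford term), and the fact that $s_{ij}$ acts on \emph{both} $x$'s and $c$'s — so that the two pieces of the Leibniz expansion really do recombine into the stated closed form. There is no structural difficulty: everything reduces to the $l=1$ relations, \eqref{comm}--\eqref{commCl}, \eqref{conj-x}, \eqref{conj-c}, and the scalar identity $\frac{b^l-a^l}{b-a}=b^{l-1}+a\frac{b^{l-1}-a^{l-1}}{b-a}$, so a clean induction carries it through. (This lemma, like the others in this section, is deferred to the Appendix in the paper, consistent with the remark that these are ``lengthy but elementary'' verifications.)
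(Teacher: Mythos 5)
Your proof follows the same route as the paper's: induction on $l$ via the Leibniz rule, which is exactly how the paper handles the type $B$ analogue (Lemma~\ref{Bcomm:[y,x^l]}) in the Appendix, with the remark that the type $A$ case is obtained by deleting the $\overline{s}_{ij}$, $\overline{s}_{ki}$, $\tau_i$ terms. One small bookkeeping slip worth fixing: with your chosen Leibniz ordering $[y_i,x_j^l]=[y_i,x_j]\,x_j^{l-1}+x_j\,[y_i,x_j^{l-1}]$, the first term becomes $u(1+c_ic_j)x_i^{l-1}s_{ij}$ after commuting $s_{ij}$ past $x_j^{l-1}$, so the polynomial identity that reassembles the two pieces is $\frac{b^{l}-a^{l}}{b-a}=a^{l-1}+b\cdot\frac{b^{l-1}-a^{l-1}}{b-a}$ (with $b=x_j$, $a=x_i$ and $a=-x_i$), not $b^{l-1}+a\cdot\frac{b^{l-1}-a^{l-1}}{b-a}$ as you wrote. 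The identity you quoted is the one matching the paper's reversed ordering $[y_i,x_j^{l+1}]=[y_i,x_j^l]\,x_j+x_j^l\,[y_i,x_j]$. Since both are valid algebraic identities, this does not affect the soundness of the argument; it is only a mismatch between the Leibniz decomposition you chose and the identity you cited.
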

\begin{proof}
This lemma is a type $A$ counterpart of Lemma~\ref{Bcomm:[y,x^l]}
for type $B$ below. A proof can be simply obtained by modifying
the proof of Lemma~\ref{Bcomm:[y,x^l]} with the removal of those
terms involving $\overline{s}_{ij}, \overline{s}_{ki}, \tau_i$
therein. We skip the details.
\end{proof}

\begin{lemma} \label{Acomm:[y,f]}
Let $W=W_{A_{n-1}}$, and $f \in \C[x_1,\ldots,x_n]$. Then the
following identity holds in $\dahc_W$:
\begin{eqnarray*}
\lbrack y_{i},f] &=& -u \sum_{k\neq i} \left( \frac{f -
f^{s_{ki}}}{x_i - x_k} + \frac{f c_k c_i - c_k c_i f^{s_{k i}}}{x_i
+ x_k}\right)s_{k i} .
\end{eqnarray*}
\end{lemma}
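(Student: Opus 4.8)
The plan is to prove Lemma~\ref{Acomm:[y,f]} by reducing the general polynomial case to the monomial case already recorded in Lemma~\ref{Acomm:[y,x^l]}, and then to a bookkeeping exercise about how the Clifford generators $c_k, c_i$ interact with monomials in the $x$'s. First I would note that both sides of the asserted identity are linear in $f$, so it suffices to establish it for $f = x_j^l$ with $j$ arbitrary and $l \in \Z_+$. There are two cases: $j \neq i$ and $j = i$. In each case I will match the claimed formula against the corresponding formula in Lemma~\ref{Acomm:[y,x^l]}.

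The key computational input is the commutation of $c_k c_i$ with monomials: from relations (\ref{commCl}) one has $c_i x_j = (-1)^{\delta_{ij}} x_j c_i$, hence for any monomial $f$, $c_k c_i \cdot f = f^{\sigma_{ki}} c_k c_i$ where $\sigma_{ki}$ denotes the sign change at positions $k$ and $i$ (i.e. $x_k \mapsto -x_k$, $x_i \mapsto -x_i$, all other variables fixed). Applying this to a monomial $x_j^l$: if $j \notin \{k,i\}$ then $x_j^l$ commutes with $c_k c_i$; if $j = i$ (the diagonal term, where $k$ ranges over $k \neq i$) then $c_k c_i x_i^l = (-1)^l x_i^l c_k c_i$, which is exactly the statement that $x_i^l c_k c_i - c_k c_i (x_i^l)^{s_{ki}}$ equals $x_i^l c_k c_i - (-x_i)^l c_k c_i$ after using $(x_i^l)^{s_{ki}} = x_k^l$ and then recognizing the combination $\frac{x_i^l - (-x_k)^l}{x_i + x_k}$ — so I should be careful to track that the numerator in Lemma~\ref{Acomm:[y,f]} is written as $f c_k c_i - c_k c_i f^{s_{ki}}$, and verify termwise that moving $c_k c_i$ past the monomial via the sign-change automorphism reproduces the $(-x_k)^l$ appearing in the monomial lemma. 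The case $j = k$ for the diagonal sum (and $j \neq i$ for the off-diagonal term) is handled the same way, using $(x_k^l)^{s_{ki}} = x_i^l$ and $c_k c_i x_k^l = -x_k^l c_k c_i$.

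With this dictionary in hand, the off-diagonal case $f = x_j^l$ with $j \neq i$ follows by extending Lemma~\ref{Acomm:[y,x^l]} using the Leibniz rule $[y_i, x_j^l] = [y_i, x_j] x_j^{l-1} + x_j [y_i, x_j^{l-1}]$ — but actually the cleanest route is to observe that for $j \neq i, k$ the monomial $x_j^l$ is fixed by $s_{ki}$ and commutes with $c_k c_i$, so both the $\frac{f - f^{s_{ki}}}{x_i - x_k}$ and $\frac{f c_k c_i - c_k c_i f^{s_{ki}}}{x_i + x_k}$ terms vanish, while for $j = i$ or $j = k$ the surviving terms precisely reassemble the right-hand side of Lemma~\ref{Acomm:[y,x^l]}. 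So the whole argument amounts to: reduce to monomials, split the sum over $k$ according to whether $k$ equals the monomial's index, and in each sub-case invoke the sign-change identity $c_k c_i \cdot g = g^{s_{ki}} c_k c_i$ for monomials $g$ in $x_i, x_k$ together with the $s_{ki}$-invariance of $x_j$ for $j \notin \{i,k\}$.

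The main obstacle I anticipate is purely notational rather than conceptual: keeping the signs straight between the "geometric reflection" $s_{ki}$ acting on $\C[x_1,\ldots,x_n]$ (which transposes $x_i \leftrightarrow x_k$) and the sign-change automorphism induced by conjugation by $c_k c_i$ (which sends $x_i \mapsto -x_i$, $x_k \mapsto -x_k$), and making sure that the quotients $\frac{f c_k c_i - c_k c_i f^{s_{ki}}}{x_i + x_k}$ are interpreted with the convention $\frac{h}{g} = \frac1g \cdot h$ stated after Lemma~\ref{Acomm:[y,x^l]}, so that the $c_k c_i$ ends up on the correct side after the division. Once one fixes a single monomial and writes $f^{s_{ki}}$, $c_k c_i f^{s_{ki}}$, and $f c_k c_i$ out explicitly, the identity with Lemma~\ref{Acomm:[y,x^l]} is immediate, and by linearity the general case follows; there is no genuine difficulty, which is presumably why the paper states it as a lemma with a short proof.
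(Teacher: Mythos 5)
There is a genuine gap at the very first step. You reduce by linearity to $f = x_j^l$, but the set $\{x_j^l : 1 \le j \le n,\ l \in \Z_+\}$ does not span $\C[x_1,\ldots,x_n]$; linearity only lets you reduce to arbitrary monomials $x_1^{l_1}\cdots x_n^{l_n}$. Lemma~\ref{Acomm:[y,x^l]} only treats single-variable powers $x_j^l$, so the case of a genuine product of several variables is never addressed in your argument. The concluding remark ``by linearity the general case follows'' does not close this gap: if the ``single monomial'' you fix is a pure power $x_j^l$, linearity cannot reach $x_1 x_2$; if it is a general multi-variable monomial, then matching it against Lemma~\ref{Acomm:[y,x^l]} is not immediate, since that lemma says nothing about such monomials.

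The missing ingredient is exactly what the paper supplies: an induction on the number of variables appearing in the monomial, driven by the Leibniz identity $[y_i, fg] = [y_i, f]g + f[y_i, g]$. This is not a trivial bookkeeping step, because when you substitute the putative formula for $[y_i, f]$ and $[y_i, g]$ and push $g$ past $s_{ki}$ and past $c_k c_i$, you produce cross terms such as $\frac{f\,(g^{s_{ki}})^{\sigma_{ki}} c_k c_i}{x_i + x_k}$ (where $\sigma_{ki}$ is the sign-change at positions $i,k$), and one must check that these cancel exactly so that the two $c_k c_i$-contributions reassemble into $\frac{(fg)\,c_k c_i - c_k c_i\,(fg)^{s_{ki}}}{x_i + x_k}$. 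Your write-up does correctly identify the sign-change behaviour $c_k c_i \cdot h = h^{\sigma_{ki}} c_k c_i$ and the distinction between $s_{ki}$ and $\sigma_{ki}$, which is the computational heart of that verification; so the missing piece is just the Leibniz induction itself, which you should carry out rather than appeal to linearity over a non-spanning set.
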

\begin{proof}
It suffices to check the formula for every monomial $f$ of the
form $x_1^{l_1}\cdots x_n^{l_n}$, which follows by
Lemma~\ref{Acomm:[y,x^l]} and an induction on $a$ based on the
identity
 $$
 \lbrack y_{i},x_1^{l_1} \cdots x_a^{l_a}x_{a+1}^{l_{a+1}}]
 = \lbrack y_{i},x_1^{l_1} \cdots x_a^{l_a}]x_{a+1}^{l_{a+1}}
 + x_1^{l_1} \cdots x_a^{l_a} \lbrack y_{i}, x_{a+1}^{l_{a+1}}].
 $$
\end{proof}

Now we are ready to compute the Dunkl operator for $y_i$'s.

\begin{theorem} 
Let $W=W_{A_{n-1}}$ and $M$ be a $K$-module. The action of $y_i$
on the $\dahc_W$-module $\C[x_1,\ldots,x_n] \otimes M$ is realized
as a Dunkl operator as follows. For any polynomial $f \in
\C[x_1,\ldots,x_n]$ and $m \in M$, we have
\begin{eqnarray*}
    y_i \circ (f \otimes m) &=& - u \sum_{k\neq i}
    \left( \frac{f - f^{s_{ki}}}{x_i - x_k} + \frac{f c_k
    c_i - c_k c_i f^{s_{k i}}}{x_i + x_k}\right) \otimes s_{k i} m.
\end{eqnarray*}
\end{theorem}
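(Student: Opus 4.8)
The plan is to show that the two prescriptions for the action of $y_i$ on $\C[x_1,\ldots,x_n] \otimes M$ — one coming from the induced module $M_y = \mathrm{Ind}_{\aH_y}^{\dahc_W} M$, the other given by the displayed Dunkl formula — agree. By the PBW theorem (Theorem~\ref{PBW:DBdaha}), $\dahc_W \cong \C[\h^*] \otimes K \otimes \C[\h]$, so every element of $M_y$ is uniquely of the form $f \otimes m$ with $f \in \C[x_1,\ldots,x_n]$, $m \in M$, where we identify $m = 1 \otimes m$ and $x_i$ acts by left multiplication. To compute $y_i \circ (f \otimes m)$ in the induced module, I would first commute $y_i$ past $f$ using the relation $y_i f = f y_i + [y_i, f]$ inside $\dahc_W$, then use that $y_i \cdot m = 0$ in $M_y$ (since $M$ is inflated from $K$ with trivial $y$-action), so that $f y_i \cdot m = 0$. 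What survives is $[y_i, f] \cdot m$, and the key input is Lemma~\ref{Acomm:[y,f]}, which expresses $[y_i, f]$ explicitly as an element of $K$-combinations (coefficients in $\C[x_1,\ldots,x_n]$, and group elements $s_{ki} \in W$). Substituting that formula and letting it act on $m \in M$ — where $s_{ki}$ acts as $s_{ki} m$ and the Clifford elements $c_k c_i$ act via the $K$-module structure — yields exactly the stated Dunkl operator formula.

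In more detail, the steps in order are: (1) invoke the PBW identification $M_y \cong \C[x_1,\ldots,x_n] \otimes M$ and recall that $x_i$ acts by left multiplication and $K$ acts diagonally (as recorded in the preceding subsection); (2) write $y_i \circ (f \otimes m) = (y_i f) \cdot m = (f y_i) \cdot m + [y_i,f] \cdot m$, and observe $(f y_i) \cdot m = f \cdot (y_i \cdot m) = 0$ because $y_i$ annihilates the $\aH_y$-submodule generated by $M$; (3) apply Lemma~\ref{Acomm:[y,f]} to rewrite $[y_i,f]$; (4) act on $m$, noting that each coefficient $\tfrac{f - f^{s_{ki}}}{x_i - x_k}$ and $\tfrac{f c_k c_i - c_k c_i f^{s_{ki}}}{x_i + x_k}$ lies in $\C[x_1,\ldots,x_n] \otimes \Cl_n$ and hence acts in the first tensor factor together with the Clifford action on $M$, while $s_{ki}$ sends $m \mapsto s_{ki} m$; (5) read off the formula. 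One should also note at step (4) that the operator-ordering convention $\frac{h}{g} = \frac{1}{g}\cdot h$ from Lemma~\ref{Acomm:[y,x^l]} is compatible with left multiplication by polynomials on $\C[x_1,\ldots,x_n] \otimes M$, so no reordering subtlety arises.

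The main obstacle — really the only nontrivial point — is verifying that the right-hand side is genuinely a \emph{well-defined operator} on $\C[x_1,\ldots,x_n] \otimes M$: one must check that $x_i - x_k$ divides $f - f^{s_{ki}}$ and that $x_i + x_k$ divides $f c_k c_i - c_k c_i f^{s_{ki}}$ in $\C[x_1,\ldots,x_n]$ (the latter after passing the Clifford element through, using that conjugation by $c_k c_i$ negates exactly the variables $x_k$ and $x_i$, cf. relation~(\ref{commCl})), so that the displayed ratios make sense as honest polynomial operators rather than formal fractions. This divisibility is exactly what makes Lemma~\ref{Acomm:[y,x^l]} and Lemma~\ref{Acomm:[y,f]} consistent — the numerators in those lemmas are built from $x_j^l - x_i^l$ and $x_j^l - (-x_i)^l$, which are visibly divisible by $x_j - x_i$ and $x_j + x_i$ respectively — and it propagates to general $f$ by the linearity and the induction already carried out in the proof of Lemma~\ref{Acomm:[y,f]}. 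Once this is in hand the theorem is immediate, and the verification that this Dunkl action together with left multiplication by the $x_i$ satisfies all the defining relations of $\dahc_W$ is not needed separately, since it is guaranteed by the construction of $M_y$ as an induced module.
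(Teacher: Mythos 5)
Your proposal is correct and follows essentially the same route as the paper: compute $y_i\circ(f\otimes m) = [y_i,f]\otimes m + f\otimes y_i m = [y_i,f]\otimes m$ (since $y_i$ acts trivially on the inflated $K$-module $M$) and then substitute Lemma~\ref{Acomm:[y,f]}. Your additional remark on well-definedness of the Dunkl operator (divisibility of the numerators by $x_i - x_k$ and $x_i + x_k$, the latter via conjugation by $c_k c_i$) is correct but is already implicit in the derivation of Lemma~\ref{Acomm:[y,f]} from Lemma~\ref{Acomm:[y,x^l]}, so the paper does not spell it out.
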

\begin{proof}
We calculate that
$$
y_i \circ (f \otimes m) = [y_i,f]\otimes m + f \otimes y_i m =
[y_i,f]\otimes m.
$$
Now the result follows from Lemma ~\ref{Acomm:[y,f]}.
\end{proof}

\begin{lemma} \label{Acomm:[y^l,x]}
Let $W=W_{A_{n-1}}$. Then the following holds in $\dahc_W$ for
$l \in \Z_+$ and $i\neq j$:
\begin{eqnarray*}
\lbrack y_{j}^l,x_{i}] &=& u \frac{y_j^l - y_i^l}{y_j -
y_i}(1+c_j c_i)s_{i j}.\\
\lbrack y_{i}^l,x_{i}] &=& -u \sum_{k\neq i} \frac{y_i^l -
y_k^l}{y_i - y_k} (1+c_k c_i)s_{k i}.
\end{eqnarray*}
\end{lemma}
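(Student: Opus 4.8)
The plan is to deduce the two identities for $[y_j^l, x_i]$ and $[y_i^l, x_i]$ from the basic commutation relations (\ref{Ayjxi}--\ref{Ayixi}) by an induction on $l$, using the Leibniz-type rule $[y_j^l, x_i] = y_j [y_j^{l-1}, x_i] + [y_j, x_i] y_j^{l-1}$ together with the commutation relations among the $y$'s and between the $y$'s and $c$'s. First I would record the base case $l=1$, which is precisely (\ref{Ayjxi}) and (\ref{Ayixi}) after noting that $(1+c_jc_i)s_{ij} = (1+c_ic_j)s_{ji}$ (so the two forms agree). The key structural point, and the reason the answer comes out as a clean geometric-series factor rather than a messier expression, is that $s_{ij}$ intertwines $y_i$ and $y_j$: concretely $s_{ij} y_i = y_j s_{ij}$ by (\ref{conj-y}), and $(1+c_jc_i)$ commutes with $y_k$ for every $k$ by (\ref{comm}). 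Hence when I push a factor of $y_j$ past $(1+c_jc_i)s_{ij}$ from the left, it becomes $y_i$ on the right; this is exactly what turns the naive recursion into the identity $y_j \cdot \frac{y_j^{l-1}-y_i^{l-1}}{y_j-y_i} + \frac{y_j^l - y_i^l}{\cdot}$-type telescoping.

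The main step is the off-diagonal identity. I would write
\begin{eqnarray*}
[y_j^l, x_i] &=& y_j [y_j^{l-1}, x_i] + [y_j, x_i] y_j^{l-1} \\
&=& y_j \cdot u\,\frac{y_j^{l-1}-y_i^{l-1}}{y_j-y_i}(1+c_jc_i)s_{ij} + u(1+c_jc_i)s_{ij} y_j^{l-1},
\end{eqnarray*}
then move the $s_{ij}$ in the second term to the left past $y_j^{l-1}$, converting it to $y_i^{l-1}$, and move $y_j$ in the first term to the left past the rational function of $y_i,y_j$ (legitimate since everything here lies in the commutative algebra $\C[\h]$, remembering the convention $\frac{h}{g}=\frac1g\cdot h$). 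Collecting gives $u\left(\frac{y_j^l - y_j y_i^{l-1}}{y_j - y_i} + y_i^{l-1}\right)(1+c_jc_i)s_{ij} = u\,\frac{y_j^l - y_i^l}{y_j - y_i}(1+c_jc_i)s_{ij}$, as claimed. For the diagonal identity, I would run the same induction on (\ref{Ayixi}), handling the sum over $k$ term by term; the only subtlety is that $s_{ki}$ now swaps $y_i$ and $y_k$, so the $k$-th summand behaves exactly like the off-diagonal case with the roles of $i,j$ replaced by $i,k$, and the telescoping is identical.

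I expect the only real obstacle to be bookkeeping: ensuring that each factor of $(1+c_jc_i)$ (or $(1+c_kc_i)$) genuinely commutes with the $y$'s one is moving past — which is guaranteed by (\ref{comm}) since these are products of $c$'s and $y_ic_j = c_jy_i$ — and being careful that no sign arises, in contrast to the $[y,x^l]$ lemmas where the $x_i$'s anticommute with $c_i$. Here there is no sign issue precisely because all the $y$'s commute with all the $c$'s, which is why the $y$-side Dunkl formula is cleaner than the $x$-side one. Everything else is the standard telescoping of a geometric series and an elementary induction, so no genuinely hard point arises once the intertwining property of the transpositions is used correctly.
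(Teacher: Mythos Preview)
Your proof is correct and follows exactly the same induction-on-$l$ argument the paper uses (the paper defers to the type $B$ proof of Lemma~\ref{Bcomm:[y^l,x]} in the Appendix, with the $\overline{s}_{ij}$, $\overline{s}_{ki}$, $\tau_i$ terms dropped). One small slip: your parenthetical claim $(1+c_jc_i)s_{ij} = (1+c_ic_j)s_{ji}$ is false since $c_jc_i = -c_ic_j$; all you actually need for the base case is that $s_{ij} = s_{ji}$ as transpositions, so (\ref{Ayjxi}) already matches the $l=1$ instance of the lemma verbatim.
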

\begin{proof}

This lemma is a type $A$ counterpart of Lemma~\ref{Bcomm:[y^l,x]}
for type $B$ below, with the removal of those terms involving
$\overline{s}_{ij}, \overline{s}_{ki}, \tau_i$ therein. We leave a
detailed proof to the reader.
\end{proof}

\begin{lemma} \label{Acomm:[f,x]}
Let $W=W_{A_{n-1}}$, and $f \in \C[y_1,\ldots,y_n]$. Then the
following identity holds in $\dahc_W$:
\begin{eqnarray*}
\lbrack f,x_i] &=& - u \sum_{k\neq i} \frac{f - f^{s_{ki}}}{y_i -
y_k} (1+c_k c_i)s_{k i}.
\end{eqnarray*}
\end{lemma}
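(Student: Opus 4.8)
The plan is to reduce the polynomial statement to the monomial case already handled in Lemma~\ref{Acomm:[y^l,x]}, exactly as Lemma~\ref{Acomm:[y,f]} was derived from Lemma~\ref{Acomm:[y,x^l]}. First I would observe that both sides of the claimed identity are linear in $f$, so it suffices to verify it when $f = y_1^{l_1}\cdots y_n^{l_n}$ is a single monomial. For the left-hand side I would use the derivation-type expansion
\begin{eqnarray*}
[\,y_1^{l_1}\cdots y_a^{l_a} y_{a+1}^{l_{a+1}}, x_i\,]
 &=& [\,y_1^{l_1}\cdots y_a^{l_a}, x_i\,]\, y_{a+1}^{l_{a+1}}
   + y_1^{l_1}\cdots y_a^{l_a}\,[\, y_{a+1}^{l_{a+1}}, x_i\,],
\end{eqnarray*}
and induct on the number of distinct variables appearing in the monomial, with Lemma~\ref{Acomm:[y^l,x]} furnishing both the base case (a pure power $y_j^{l}$ or $y_i^{l}$) and the single-power commutators $[y_{a+1}^{l_{a+1}}, x_i]$ needed at each inductive step.

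The key bookkeeping point, and the one I expect to be the only real obstacle, is that the commutators $[y_j^l, x_i]$ for $j \ne i$ produce a reflection $s_{ij}$, so when one multiplies such a term on the left or right by a remaining monomial factor $g$ in the $y$'s one must move $g$ past $s_{ij}$, replacing $g$ by $g^{s_{ij}}$, and likewise push $g$ past the Clifford element $c_jc_i$. Since the $y_k$ commute among themselves and commute with the $c_k$ (relation~(\ref{comm})), these moves are clean, but one has to check that after collecting all contributions from the various positions of $x_i$ in the product the telescoping sum assembles into the single divided-difference term $u\,\dfrac{f - f^{s_{ki}}}{y_i - y_k}(1+c_kc_i)s_{ki}$. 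Concretely, writing $f$ as a product and applying the Leibniz expansion repeatedly, each elementary commutator $[y_k^{l_k}, x_i]$ contributes $u\,\dfrac{y_i^{l_k} - y_k^{l_k}}{y_i - y_k}(1+c_kc_i)s_{ki}$ flanked by the other (untouched, hence $s_{ki}$-conjugated on one side) power factors, and summing a geometric-type telescoping series in the exponents recombines these into $\dfrac{f - f^{s_{ki}}}{y_i - y_k}$.

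I would present this as a short induction: the identity for $f$ of the form $x$-free monomials in finitely many $y$-variables reduces, via the Leibniz rule above and the commutativity relations, to the two displayed formulas of Lemma~\ref{Acomm:[y^l,x]}; and the telescoping identity
\begin{eqnarray*}
\frac{gh - (gh)^{s_{ki}}}{y_i - y_k}
 = \frac{g - g^{s_{ki}}}{y_i - y_k}\, h
 + g^{s_{ki}}\,\frac{h - h^{s_{ki}}}{y_i - y_k}
\end{eqnarray*}
for $g, h \in \C[y_1,\ldots,y_n]$ is precisely the Leibniz rule for the divided difference operator, which matches the algebraic Leibniz rule for $[\,\cdot\,, x_i]$ once the signs from passing $c_kc_i$ through are checked to be trivial (they are, since $c_k$ and $c_i$ are even in the relevant $\Z_2$-grading considerations here and commute with all $y$'s). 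Hence the details are routine and I would leave them to the reader, in parallel with the treatment of Lemma~\ref{Acomm:[y,f]}.
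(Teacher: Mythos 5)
Your proposal is correct and takes essentially the same approach as the paper: the paper's proof reduces to monomials and runs the Leibniz induction exactly as for Lemma~\ref{Acomm:[y,f]}, but with Lemma~\ref{Acomm:[y^l,x]} as the input, which is precisely what you describe. One small cosmetic note: the form of the divided-difference Leibniz rule that falls directly out of the commutator expansion is $\dfrac{gh-(gh)^{s_{ki}}}{y_i-y_k}=\dfrac{g-g^{s_{ki}}}{y_i-y_k}\,h^{s_{ki}}+g\,\dfrac{h-h^{s_{ki}}}{y_i-y_k}$ (the $h^{s_{ki}}$ arising from pushing $h$ past $s_{ki}$), which is equivalent to the version you wrote but matches the bookkeeping more literally.
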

\begin{proof}
It suffices to check the formula for every monomial $f$, which can
be done as for the formula in Lemma~\ref{Acomm:[y,f]} using now
Lemma~\ref{Acomm:[y^l,x]} replacing Lemma~\ref{Acomm:[y,x^l]}
therein.
\end{proof}

Now we are ready to compute the Dunkl operator for $x_i$'s.
\begin{theorem} \label{ADunkl:x}
Let $W=W_{A_{n-1}}$ and $M$ be a $K$-module. The action of $x_i$
on $\C[y_1,\ldots,y_n] \otimes M$ is realized as a Dunkl operator
as follows. For any polynomial $f \in \C[y_1,\ldots,y_n]$ and $m
\in M$, we have
\begin{eqnarray*}
    x_i \circ (f \otimes m) &=&
      u \sum_{k\neq i} \frac{f - f^{s_{ki}}}{y_i - y_k}
    \otimes (1+c_k c_i)s_{k i} m.
\end{eqnarray*}
\end{theorem}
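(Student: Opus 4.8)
The plan is to mimic the proof of the previous theorem (the Dunkl operator for $y_i$), transporting the action of $x_i$ through the chosen identification $M_x = \C[y_1,\ldots,y_n]\otimes M$. By construction of the induced module $M_x = \text{Ind}_{\aH_y}^{\dahc_W} M$, the elements $y_i$ act by left multiplication in the first factor and every element of $M$ sits in degree $0$ of the polynomial factor, so in particular $x_i\cdot(1\otimes m) = 0$ for $m\in M$ (the action of $x_i$ is trivial on $M$ viewed inside $\aH_y$). Hence for a polynomial $f\in\C[y_1,\ldots,y_n]$ and $m\in M$ we compute, inside $\dahc_W$,
\[
x_i\circ(f\otimes m) = x_i f\otimes m = [x_i,f]\otimes m + f x_i\otimes m = [x_i,f]\otimes m,
\]
since $f x_i\cdot(1\otimes m) = f\otimes(x_i\cdot m) = 0$. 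So the whole statement reduces to the commutator formula $[x_i,f] = [y_i,f]$-type identity, i.e. to Lemma~\ref{Acomm:[f,x]}: $[f,x_i] = -u\sum_{k\neq i}\frac{f-f^{s_{ki}}}{y_i-y_k}(1+c_kc_i)s_{ki}$, whence $[x_i,f] = u\sum_{k\neq i}\frac{f-f^{s_{ki}}}{y_i-y_k}(1+c_kc_i)s_{ki}$, and this is exactly the claimed formula after passing to the induced module.

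First I would invoke Lemma~\ref{Acomm:[f,x]} verbatim; it is already proved earlier in the excerpt (it is reduced there to Lemma~\ref{Acomm:[y^l,x]}, which in turn follows the type $B$ pattern). The only genuine content remaining is the bookkeeping that turns the identity in $\dahc_W$ into an identity of operators on $\C[y_1,\ldots,y_n]\otimes M$: one must check that each term $\frac{f-f^{s_{ki}}}{y_i-y_k}(1+c_kc_i)s_{ki}$, when applied to $1\otimes m$, produces $\frac{f-f^{s_{ki}}}{y_i-y_k}\otimes(1+c_kc_i)s_{ki}m$ under the diagonal $K$-action recalled just before the theorem (namely $w.(g\otimes m) = g^w\otimes wm$ and $c_i.(g\otimes m) = g\otimes c_i m$ since $g\in\C[y_1,\ldots,y_n]$ and the $c_i$ commute with the $y_j$ by \eqref{comm}). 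Because $\frac{f-f^{s_{ki}}}{y_i-y_k}$ is already a polynomial in the $y$'s (the numerator is divisible by $y_i-y_k$), left multiplication by it is unambiguous and no further $K$-twisting intervenes.

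I expect no real obstacle here: the argument is a direct translation of the preceding theorem's proof, with Lemma~\ref{Acomm:[f,x]} doing all the heavy lifting. The one point requiring a line of care — and the closest thing to a subtlety — is confirming that $x_i$ indeed kills $M\subset M_x$ and that moving $f$ past $x_i$ introduces only the commutator (so that the "diagonal" part $f x_i$ contributes nothing), which is immediate from the definition of $M_x$ as an induced module over $\aH_y$.
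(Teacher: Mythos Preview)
Your proof is correct and follows exactly the paper's own argument: compute $x_i\circ(f\otimes m) = [x_i,f]\otimes m$ using that $x_i$ annihilates $M$, then invoke Lemma~\ref{Acomm:[f,x]}. One notational slip: the induced module is $M_x = \text{Ind}_{\aH_x}^{\dahc_W} M$, not $\text{Ind}_{\aH_y}^{\dahc_W} M$, and it is because $x_i\in\aH_x$ (not $\aH_y$) that $x_i$ acts trivially on $M$ before inducing.
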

\begin{proof}
We observe that
$$
x_i \circ (f \otimes m) = [x_i,f]\otimes m + f \otimes x_i m =
[x_i,f]\otimes m.
$$
Now the result follows by Lemma~\ref{Acomm:[f,x]}.
\end{proof}

\subsection{The Dunkl Operators for $\dahc_{B_{n}}$ }

We first prepare a few lemmas. The proofs of
Lemmas~\ref{Bcomm:[y,x^l]}, \ref{Bcomm:[y,f]}, and
\ref{Bcomm:[y^l,x]} are postponed to the Appendix.

\begin{lemma} \label{Bcomm:[y,x^l]}
Let $W=W_{B_n}$. Then the following holds in $\dahc_W$ for $l \in \Z_+$ and $i\neq
j$:
\begin{eqnarray*}
\lbrack y_{i},x_{j}^l] &=& u \left(\frac{x_j^l - x_i^l}{x_j - x_i} +
\frac{x_j^l - (-x_i)^l}{x_j + x_i}c_i c_j\right)s_{i j}\\
&& \;\; - u \left(\frac{x_j^l - (-x_i)^l}{x_j + x_i} - \frac{x_j^l
-
x_i^l}{x_j - x_i}c_i c_j\right)\overline{s}_{i j}, \\
\lbrack y_{i},x_{i}^l] &=& -u \sum_{k\neq i} \left(\frac{x_i^l -
x_k^l}{x_i - x_k} +
\frac{x_i^l - (-x_k)^l}{x_i + x_k}c_k c_i\right)s_{k i} \\
&& \;\; - u \sum_{k\neq i} \left(\frac{x_i^l - (-x_k)^l}{x_i +
x_k} -\frac{x_i^l - x_k^l}{x_i - x_k}c_k c_i\right)\overline{s}_{k
i}
 - \sqrt{2}v \frac{x_i^l - (-x_i)^l}{2x_i}\tau_i.
\end{eqnarray*}
\end{lemma}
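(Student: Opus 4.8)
The plan is to prove both identities by induction on $l$, using the base-case relations~(\ref{Byjxi}--\ref{Byixi}) together with the commutation relations~(\ref{commCl}) between the $x_i$'s and the $c_j$'s and the conjugation relations~(\ref{conj-x}--\ref{conj-c}). First I would record the telescoping identity $[y_i, x_j^{l+1}] = [y_i,x_j^l]\,x_j + x_j^l\,[y_i,x_j]$, which reduces the inductive step to moving a single factor $x_j$ (resp. $x_i$) past the group elements $s_{ij}, \overline{s}_{ij}, \tau_i$ and the Clifford generators appearing in the right-hand side. The key move is that $s_{ij}$ (resp. $\overline{s}_{ij}$, $\tau_i$) conjugates $x_j$ to $x_i$ (resp. $-x_i$, and $\tau_i$ fixes $x_j$ for $j\neq i$), while $c_ic_j$ anticommutes with both $x_i$ and $x_j$; so pushing $x_j$ to the right through a term of the shape $g(x)\, c^\epsilon\, w$ turns it into $g(x)\, c^\epsilon\, w(x_j)$ up to a sign coming from~(\ref{commCl}).

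Concretely, for the first identity I would write $[y_i, x_j^l]\, x_j$ using the inductive hypothesis and push the trailing $x_j$ to the right past $s_{ij}$ and $\overline{s}_{ij}$: the $s_{ij}$-term contributes $\frac{x_j^l - x_i^l}{x_j-x_i}x_j$ without sign change (since $s_{ij}$ commutes with $c_ic_j$, and $x_j$ commutes with $\frac{1}{x_j-x_i}$ as both lie in $\C[\h^*]$) plus $\frac{x_j^l-(-x_i)^l}{x_j+x_i}c_ic_j\cdot(-x_j)$, the sign coming from $x_j c_i c_j = -c_ic_j x_j$; similarly the $\overline{s}_{ij}$-term produces a $(-x_i)$ on the non-Clifford piece and a $(+x_i)$ on the Clifford piece after the two sign flips cancel appropriately. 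Adding the leftover term $x_j^l[y_i,x_j]$, which is exactly the $l=1$ case, I would check that the numerators reassemble into $x_j^{l+1} - x_i^{l+1}$ and $x_j^{l+1} - (-x_i)^{l+1}$ via the elementary identities $\frac{x_j^l-x_i^l}{x_j-x_i}x_j + x_i^l = \frac{x_j^{l+1}-x_i^{l+1}}{x_j-x_i}$ and $\frac{x_j^l-(-x_i)^l}{x_j+x_i}x_j + (-x_i)^l \cdot(-1)\cdot(\text{appropriate sign}) = \frac{x_j^{l+1}-(-x_i)^{l+1}}{x_j+x_i}$, being careful that the second telescoping uses $x_j = (x_j+x_i) - x_i$.

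The second identity is handled the same way, with the sum over $k\neq i$ and the extra $\tau_i$-term: here one uses that $\tau_i$ fixes every $x_k$ with $k\neq i$ and sends $x_i\mapsto -x_i$, and that $\tau_i$ commutes with $c_kc_i$ for $k\neq i$ (as $\tau_i$ changes the sign of $e_i$ only, hence acts on $\Cl_n$ by $c_i\mapsto -c_i$, so $c_kc_i\mapsto -c_kc_i$ — I should double-check the sign bookkeeping here, since this is where the $\sqrt2 v$ term's coefficient $\frac{x_i^l-(-x_i)^l}{2x_i}$ must emerge: pushing $x_i$ past $\tau_i$ gives $-x_i$, producing the telescoping $\frac{x_i^l-(-x_i)^l}{2x_i}x_i + \frac{(-x_i)^l - (-x_i)^{l+1}/(-x_i)\cdots}{}$, which collapses to $\frac{x_i^{l+1}-(-x_i)^{l+1}}{2x_i}$ after adding the $l=1$ contribution $\frac{x_i-(-x_i)}{2x_i} = 1$). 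The main obstacle is precisely this sign bookkeeping: there are four interacting sources of signs — the anticommutation $x_ic_i = -c_ix_i$, the $\Cl_n$-action of the signed permutations on the $c$'s, the commutation of $x$'s with rational functions of $x$'s, and the $\pm$ structure of the $(1\pm c_kc_i)$ idempotent-like factors — and getting every term to land in the claimed combination requires care rather than cleverness. I would organize the computation by treating the four group-element types ($s_{ij}$, $\overline{s}_{ij}$ for $j\neq i$; $s_{ki}$, $\overline{s}_{ki}$, $\tau_i$ in the diagonal case) separately and verifying the scalar telescoping for each.
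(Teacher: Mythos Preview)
Your approach is exactly the paper's: induction on $l$ via the Leibniz identity $[y_i,x_j^{l+1}]=[y_i,x_j^{l}]\,x_j+x_j^{l}\,[y_i,x_j]$, then pushing the trailing $x$ leftward through the group element (using $s_{ij}x_j=x_is_{ij}$, $\overline{s}_{ij}x_j=-x_i\overline{s}_{ij}$, $\tau_i x_i=-x_i\tau_i$) and the Clifford factor, and recombining numerators. Two small slips to fix in your write-up, neither of which affects the strategy: $s_{ij}$ \emph{anti}commutes with $c_ic_j$ (it swaps $c_i\leftrightarrow c_j$), though you never actually need to commute them; and after moving $x_j$ left through $c_ic_js_{ij}$ the factor that emerges is $-x_i$, not $-x_j$, so the correct telescoping is $\tfrac{x_j^l-x_i^l}{x_j-x_i}\,x_i+x_j^l=\tfrac{x_j^{l+1}-x_i^{l+1}}{x_j-x_i}$ (and analogously for the other three pieces).
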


\begin{lemma} \label{Bcomm:[y,f]}
Let $W=W_{B_n}$, and $f \in \C[x_1,\ldots,x_n]$. Then the
following holds in $\dahc_W$:
\begin{eqnarray*}
\lbrack y_{i},f] &=& -u \sum_{k\neq i} \left(\frac{f -
f^{s_{ki}}}{x_i - x_k} +
\frac{f - f^{\overline{s}_{k i}}}{x_i + x_k}c_k c_i\right)s_{k i} \\
&& - u \sum_{k\neq i} \left(\frac{f - f^{\overline{s}_{k i}}}{x_i
+ x_k} - \frac{f - f^{s_{k i}}}{x_i - x_k}c_k
c_i\right)\overline{s}_{k i} - \sqrt{2}v \frac{f -
f^{\tau_i}}{2x_i} \tau_i.
\end{eqnarray*}
\end{lemma}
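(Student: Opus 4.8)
\textbf{Proof proposal for Lemma~\ref{Bcomm:[y,f]}.}
The plan is to reduce the general polynomial statement to the single-variable power case already recorded in Lemma~\ref{Bcomm:[y,x^l]}, exactly as is done in the type $A$ case (Lemma~\ref{Acomm:[y,f]}). By linearity it suffices to establish the formula for an arbitrary monomial $f = x_1^{l_1}\cdots x_n^{l_n}$, and I would proceed by induction on the number of variables actually occurring in $f$, using the Leibniz-type identity
\[
[y_i,\, g\, x_a^{l_a}] = [y_i, g]\, x_a^{l_a} + g\, [y_i, x_a^{l_a}],
\]
valid since $[y_i,\cdot]$ is a derivation of $\dahc_W$. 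The base case $f = x_a^{l_a}$ (with $a = i$ or $a\neq i$) is precisely Lemma~\ref{Bcomm:[y,x^l]}. The key conceptual point is that the right-hand side of the asserted formula is itself a ``twisted derivation'': writing $D_w(f)$ for the coefficient operator of each group element $w \in \{s_{ki}, \overline{s}_{ki}, \tau_i\}$, one must check that each such $D_w$ satisfies a product rule compatible with the commutation of $x_a^{l_a}$ past the reflection $w$ appearing to its right. Concretely, when $[y_i,x_a^{l_a}]$ is moved to sit in front of $g$ inside the term $[y_i,g]\,x_a^{l_a}$, each $s_{ki}$, $\overline{s}_{ki}$, or $\tau_i$ acts on $x_a^{l_a}$ via the substitution $x_a \mapsto x_a^{w}$, and this is exactly what turns $f^{w}$ (rather than $g^{w} x_a^{l_a}$) into the numerators appearing in the statement. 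The signs coming from the Clifford factors $c_k c_i$ anticommuting past the relevant $x$'s (relation~(\ref{commCl})) are handled by the same bookkeeping: $c_k c_i$ commutes with $x_a$ unless $a \in \{k,i\}$, which is consistent with the appearance of $f^{\overline{s}_{ki}}$ (not $f^{s_{ki}}$) in the Clifford-twisted numerators.

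The induction step then amounts to a direct but lengthy algebraic verification: expand $[y_i, g\, x_a^{l_a}]$ using the two terms of the Leibniz rule, insert the inductive hypothesis for $[y_i,g]$ and Lemma~\ref{Bcomm:[y,x^l]} for $[y_i,x_a^{l_a}]$, commute all group elements and Clifford monomials into the canonical order (group element on the right), and recognize the result as the asserted formula for $f = g\, x_a^{l_a}$. The only nontrivial identities needed are elementary rational-function identities of the shape
\[
\frac{g - g^{w}}{x_i \mp x_k}\, (x_a^{l_a})^{w} + g \cdot \frac{x_a^{l_a} - (x_a^{l_a})^{w}}{x_i \mp x_k}
= \frac{g x_a^{l_a} - (g x_a^{l_a})^{w}}{x_i \mp x_k},
\]
together with the parallel identities for the $\tau_i$ term (with denominator $2x_i$). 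These are immediate once one notes $(g x_a^{l_a})^{w} = g^{w} (x_a^{l_a})^{w}$.

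I expect the main obstacle to be purely organizational rather than conceptual: keeping careful track of the three distinct families of group elements ($s_{ki}$, $\overline{s}_{ki}$, $\tau_i$) simultaneously with the Clifford sign factors, since moving $c_k c_i$ past $x_a^{l_a}$ produces a sign $(-1)^{l_a(\delta_{ak}+\delta_{ai})}$ which must be absorbed correctly into the choice of $f^{s_{ki}}$ versus $f^{\overline{s}_{ki}}$ in each numerator. This is the same subtlety already present in Lemma~\ref{Bcomm:[y,x^l]}, so in practice the cleanest route is to phrase the induction so that it manipulates the full operator-valued expression (sum over $k$, both reflection types, and the $\tau_i$ term) as a single entity, invoking Lemma~\ref{Bcomm:[y,x^l]} and the inductive hypothesis verbatim and collapsing via the rational-function identities above. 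Since the statement is used only as a stepping stone to the Dunkl operator formula for $\dahc_{B_n}$ (the type $B$ analogue of Theorem~\ref{ADunkl:x} and its companion), and since the entirely parallel computation in type $A$ was already dispatched in one line in the proof of Lemma~\ref{Acomm:[y,f]}, I would relegate the detailed bookkeeping to the Appendix and present here only the Leibniz reduction together with the observation that the needed rational-function identities close the induction.
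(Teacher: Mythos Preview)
Your proposal is correct and follows essentially the same approach as the paper: both reduce to monomials by linearity, set up an induction using the Leibniz identity $[y_i, g\,x_a^{l_a}]=[y_i,g]\,x_a^{l_a}+g\,[y_i,x_a^{l_a}]$, and invoke Lemma~\ref{Bcomm:[y,x^l]} as the base case. The only organizational difference is that the paper first treats monomials $g=\prod_{j\neq i}x_j^{a_j}$ not involving $x_i$ and then handles $f=x_i^l g$ in a single displayed computation, whereas you induct uniformly on the number of variables; this is cosmetic.
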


Now we are ready to compute the Dunkl operator for $y_i$'s.
\begin{theorem} 
Let $W=W_{B_n}$ and $M$ be a $K$-module. The action of $y_i$ on
$\C[x_1,\ldots,x_n] \otimes M$ is realized as follows. For any
polynomial $f \in \C[x_1,\ldots,x_n]$ and $m \in M$, we have
\begin{eqnarray*}
    y_i \circ (f \otimes m) &=&
  - u \sum_{k\neq i} \left(\frac{f - f^{s_{ki}}}{x_i - x_k} + \frac{f -
    f^{\overline{s}_{k i}}}{x_i + x_k}c_k c_i\right) \otimes s_{k i}
    m \\
    && - u \sum_{k\neq i} \left(\frac{f - f^{\overline{s}_{k i}}}{x_i +
    x_k} - \frac{f - f^{s_{k i}}}{x_i - x_k}c_k c_i\right)\otimes \overline{s}_{k
    i}m \\
    &&- \sqrt{2}v \frac{f - f^{\tau_i}}{2x_i} \otimes \tau_i m.
\end{eqnarray*}
\end{theorem}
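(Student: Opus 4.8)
The plan is to realize the action of $y_i$ on the induced module $M_y = \C[x_1,\ldots,x_n]\otimes M$ by moving $y_i$ past the polynomial factor $f$ and then killing the remaining $y_i\otimes m$ term. Concretely, using the triangular decomposition $\dahc_W \cong \C[\h^*]\otimes K\otimes\C[\h]$ of Theorem~\ref{PBW:DBdaha}, every element of $M_y$ is uniquely of the form $f\otimes m$ with $f\in\C[x_1,\ldots,x_n]$, $m\in M$, and the $\aH_y$-module structure on $M$ forces $y_i m = 0$. Hence
\begin{eqnarray*}
y_i\circ(f\otimes m) = (y_i f)\otimes m = [y_i,f]\otimes m + f\otimes y_i m = [y_i,f]\otimes m,
\end{eqnarray*}
so the entire computation reduces to the commutator formula $[y_i,f]$ inside $\dahc_W$. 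That formula is exactly the content of Lemma~\ref{Bcomm:[y,f]} for type $B_n$: substituting its right-hand side and interpreting each element of $W$ appearing there (namely $s_{ki}$, $\overline{s}_{ki}$, $\tau_i$) as acting on the second tensor factor of $M_y$ gives precisely the claimed three-line Dunkl formula. So the proof is a two-line argument once Lemma~\ref{Bcomm:[y,f]} is in hand.

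The only subtle point worth spelling out is why the coefficient operators, which a priori are Laurent-type expressions like $\frac{f - f^{s_{ki}}}{x_i - x_k}$, actually lie in $\C[x_1,\ldots,x_n]$ so that the formula defines an honest operator on $M_y$ rather than on some localization. This is the standard divisibility observation: $f - f^{s_{ki}}$ is divisible by $x_i - x_k$, $f - f^{\overline{s}_{ki}}$ is divisible by $x_i + x_k$, and $f - f^{\tau_i}$ is divisible by $x_i$ (equivalently by $2x_i$ up to a scalar), each because the relevant element of $W$ fixes the corresponding linear form up to sign; and the convention $\frac{h}{g} = \frac1g\cdot h$ from Lemma~\ref{Acomm:[y,x^l]} tells us to read these ratios as genuine polynomials. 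I would either recall this divisibility explicitly or simply refer back to the monomial-by-monomial verification already used to establish Lemma~\ref{Bcomm:[y,f]} (which in turn rests on Lemma~\ref{Bcomm:[y,x^l]}).

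There is essentially no obstacle here: the real work is hidden in Lemma~\ref{Bcomm:[y,x^l]} and Lemma~\ref{Bcomm:[y,f]}, whose proofs are deferred to the Appendix, and in the PBW theorem which guarantees $M_y$ is well-defined with the stated vector-space identification. Given those, the theorem is immediate. If one wanted a self-contained argument one would also note that the formula is consistent — i.e.\ that these Dunkl operators together with left multiplication by the $x_i$ and the diagonal $K$-action really do satisfy all the defining relations of $\dahc_W$ — but that consistency is automatic from the fact that $M_y = \mathrm{Ind}_{\aH_y}^{\dahc_W} M$ is a genuine $\dahc_W$-module, so no extra verification is needed. Accordingly I would keep the proof to the short calculation displayed above followed by an invocation of Lemma~\ref{Bcomm:[y,f]}.
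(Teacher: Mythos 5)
Your proposal is correct and takes essentially the same route as the paper: reduce $y_i\circ(f\otimes m)$ to $[y_i,f]\otimes m$ using $y_i m=0$ in the induced module $M_y$, then substitute the commutator formula from Lemma~\ref{Bcomm:[y,f]}. The remarks on divisibility and the automatic consistency of the Dunkl formula are valid extra commentary, but they are not needed and do not appear in the paper's two-line proof.
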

\begin{proof}
We observe that
$$
y_i \circ (f \otimes m) = [y_i,f]\otimes m + f \otimes y_i m =
[y_i,f]\otimes m.
$$
Now the result follows from Lemma ~\ref{Bcomm:[y,f]}.
\end{proof}

\begin{lemma} \label{Bcomm:[y^l,x]}
Let $W=W_{B_n}$. Then the following holds in $\dahc_W$ for $l \in \Z_+$ and $i\neq
j$:
\begin{eqnarray*}
\lbrack y_{j}^l,x_{i}] &=& u \left(\frac{y_j^l - y_i^l}{y_j -
y_i}(1+c_j c_i)s_{i j} - \frac{y_j^l - (-y_i)^l}{y_j + y_i}(1-c_j
c_i)
\overline{s}_{i j}\right).\\
\lbrack y_{i}^l,x_{i}] &=& -u \sum_{k\neq i} \frac{y_i^l -
y_k^l}{y_i -
y_k} (1+c_k c_i)s_{k i}\\
&& - u \sum_{k\neq i} \frac{y_i^l - (-y_k)^l}{y_i + y_k} (1-c_k
c_i)\overline{s}_{k i} - \sqrt{2}v \frac{y_i^l -
(-y_i)^l}{2y_i}\tau_i.
\end{eqnarray*}
\end{lemma}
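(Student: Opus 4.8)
The plan is to fix $i$ and induct on $l$, starting from the defining relations (\ref{Byjxi})--(\ref{Byixi}), which are exactly the cases $l=1$. For the inductive step I would write $[y_j^l, x_i] = y_j [y_j^{l-1}, x_i] + [y_j, x_i] y_j^{l-1}$ and substitute the inductive hypothesis into the first term and the defining relation into the second. The key technical point is that each term on the right carries a group element ($s_{ij}$, $\overline{s}_{ij}$, or $\tau_i$) sitting to the right of a rational function of the $y_k$'s, and to combine the two contributions one must commute $y_j$ (from $[y_j,x_i]y_j^{l-1} = u(\cdots)s_{ij}y_j^{l-1} - \cdots$) past that group element. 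Using (\ref{conj-y}) we have $s_{ij} y_j^{l-1} = y_i^{l-1} s_{ij}$ and $\overline{s}_{ij} y_j^{l-1} = (-y_i)^{l-1}\overline{s}_{ij}$ (the sign from the sign-change), while $y_j$ commutes with $c_i, c_j$ by (\ref{comm}); so after straightening everything to the form (rational function in $y$'s)$\cdot$(Clifford element)$\cdot$(Weyl element), the two pieces assemble into the telescoping identity $y_j \cdot \frac{y_j^{l-1}-y_i^{l-1}}{y_j-y_i} + y_i^{l-1} = \frac{y_j^l - y_i^l}{y_j - y_i}$, and similarly $y_j\cdot\frac{y_j^{l-1}-(-y_i)^{l-1}}{y_j+y_i} + (-y_i)^{l-1} = \frac{y_j^l-(-y_i)^l}{y_j+y_i}$, giving the first formula.

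For the second formula I would argue the same way with $[y_i^l, x_i] = y_i[y_i^{l-1},x_i] + [y_i,x_i]y_i^{l-1}$, handling the sum over $k\neq i$ term by term exactly as above. The only genuinely new ingredient is the $\tau_i$ term: here $\tau_i y_i^{l-1} = (-y_i)^{l-1}\tau_i$ since $\tau_i$ is the sign change at $i$, and the telescoping reads $y_i\cdot\frac{y_i^{l-1}-(-y_i)^{l-1}}{2y_i} + \frac{1}{2}\big(y_i^{l-1} - (-y_i)^{l-1}\big)\cdot\frac{?}{}$ — more precisely one checks $\frac{1}{2}\big((-y_i)^{l-1} + y_i^{l-1} - (-y_i)^{l-1} + \cdots\big)$; the cleanest bookkeeping is to verify directly that $y_i\cdot\frac{y_i^{l-1}-(-y_i)^{l-1}}{2y_i} + (-y_i)^{l-1}\cdot\frac{1}{?}$ equals $\frac{y_i^l-(-y_i)^l}{2y_i}$, which is immediate since $y_i^l - (-y_i)^l = y_i\big(y_i^{l-1} - (-1)^{l-1}y_i^{l-1}\big) + \big((-1)^{l-1} - (-1)^l\big)y_i^{l-1}\cdot(\tfrac12\cdot 2y_i)/(2y_i)$, i.e. the numerator splits as $y_i\cdot(y_i^{l-1}-(-y_i)^{l-1}) + \big(y_i^{l-1}-(-y_i)^{l-1}\big)\cdot\big(\text{correction from }[y_i,x_i]\big)$ in the obvious telescoping manner.

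The one subtlety requiring care is the placement of the Clifford factors $c_kc_i$ relative to the reflections. When I commute $y_i$ (which is Clifford-central by (\ref{comm})) past $(1\pm c_kc_i)s_{ki}$ or $(1\pm c_kc_i)\overline{s}_{ki}$, nothing happens to the Clifford part, so the $c_kc_i$'s simply ride along unchanged; this is precisely why the type $B$ formula for $[y_j^l,x_i]$ is so much simpler than the one for $[y_i,x_j^l]$ (where one commutes $x_j$, which does anticommute with $c_j$, through the relations). I expect the main obstacle to be purely organizational: keeping the signs straight in the interplay between $\overline{s}_{ki}$ acting by a sign change on $y_i$ (turning $y_i^{l-1}$ into $(-y_i)^{l-1}$) and the $\pm$ in $(1\mp c_kc_i)$, so that the telescoping sums close up exactly to $\frac{y_i^l - y_k^l}{y_i-y_k}$ and $\frac{y_i^l-(-y_k)^l}{y_i+y_k}$ with the stated Clifford coefficients. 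Once the $l=1$ base case and the single commutation rule $\sigma y_j^{l-1} = \sigma(y_j)^{l-1}\sigma$ are in hand, the induction is routine.
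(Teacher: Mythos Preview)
Your approach is exactly the paper's: induct on $l$ via the Leibniz decomposition $[y_j^{l},x_i]=y_j[y_j^{l-1},x_i]+[y_j,x_i]y_j^{l-1}$, push the trailing $y_j^{l-1}$ through the reflection using (\ref{conj-y}), and telescope. The only place your write-up stumbles is the $\tau_i$ bookkeeping, which you leave as a string of question marks; in fact the identity you need is clean: since $\tau_i y_i^{l-1}=(-y_i)^{l-1}\tau_i$, the combined $\tau_i$ coefficient is
\[
y_i\cdot\frac{y_i^{l-1}-(-y_i)^{l-1}}{2y_i}+(-y_i)^{l-1}
=\frac{y_i^{l-1}+(-y_i)^{l-1}}{2}
=\frac{y_i^{l}-(-y_i)^{l}}{2y_i},
\]
and the induction closes.
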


Similarly as before, we can derive the next lemma from
Lemma~\ref{Bcomm:[y^l,x]}.
\begin{lemma} \label{Bcomm:[f,x]}
Let $W=W_{B_n}$, and $f \in \C[y_1,\ldots,y_n]$. Then the
following identity holds in $\dahc_W$:
\begin{eqnarray*}
\lbrack f,x_i] &=&
 - u \sum_{k\neq i} \frac{f - f^{s_{ki}}}{y_i - y_k}
    (1+c_k c_i)s_{k i}\\
    && - u \sum_{k\neq i} \frac{f - f^{\overline{s}_{k i}}}
    {y_i + y_k} (1-c_k c_i)\overline{s}_{k i}
    - \sqrt{2}v \frac{f - f^{\tau_i}}{2y_i}\tau_i.
\end{eqnarray*}
\end{lemma}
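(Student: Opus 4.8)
The plan is to reduce everything to the monomial case and then iterate the basic commutator identity of Lemma~\ref{Bcomm:[y^l,x]}. Since $\C[y_1,\ldots,y_n]$ is spanned by monomials $f = y_1^{l_1}\cdots y_n^{l_n}$, and since $[f,x_i]$ is linear in $f$, it suffices to verify the claimed formula when $f$ is such a monomial. I would proceed by induction on the total degree (equivalently, on the number of tensor factors $y_a^{l_a}$ appearing with nonzero exponent), using the Leibniz-type identity
$$
[y_1^{l_1}\cdots y_a^{l_a}y_{a+1}^{l_{a+1}},x_i]
= [y_1^{l_1}\cdots y_a^{l_a},x_i]\,y_{a+1}^{l_{a+1}}
+ y_1^{l_1}\cdots y_a^{l_a}\,[y_{a+1}^{l_{a+1}},x_i],
$$
exactly as in the proof of Lemma~\ref{Acomm:[y,f]}. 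The base case $f = y_j^{l}$ (a single power of one variable) is precisely Lemma~\ref{Bcomm:[y^l,x]}, covering both the subcases $j\neq i$ and $j=i$.

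The main technical point in carrying out the induction is to check that the divided-difference shape of the right-hand side is compatible with the Leibniz rule, and that the various group elements $s_{ki}$, $\overline{s}_{ki}$, $\tau_i$ are moved past the polynomial factors correctly. For each term one must commute $y_{a+1}^{l_{a+1}}$ (or the prefix $y_1^{l_1}\cdots y_a^{l_a}$) through $s_{ki}$, $\overline{s}_{ki}$, or $\tau_i$ using (\ref{conj-y}); this turns the polynomial into its image under the corresponding permutation and produces exactly the telescoping needed to combine the two inductive pieces into a single divided difference $\frac{f - f^{s_{ki}}}{y_i - y_k}$, $\frac{f - f^{\overline{s}_{ki}}}{y_i + y_k}$, $\frac{f - f^{\tau_i}}{2y_i}$ for the full monomial $f$. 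The key algebraic fact making the telescoping work is that for a product $fg$ one has
$$
\frac{fg - (fg)^{\sigma}}{y_i - y_k}
= \frac{f - f^{\sigma}}{y_i - y_k}\,g^{\sigma} + f\,\frac{g - g^{\sigma}}{y_i - y_k},
$$
and similarly with $y_i + y_k$ in place of $y_i - y_k$ (for $\sigma = \overline{s}_{ki}$) and with $2y_i$ (for $\sigma=\tau_i$); here one uses that $\sigma$ is an involution fixing the relevant linear form up to sign. Since in the $\dahc_{B_n}$ relations the coefficients $1\pm c_kc_i$ commute with all $y$'s by (\ref{comm}), the Clifford factors simply ride along and cause no sign subtleties in this computation.

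I do not expect a genuine obstacle here: this is the $y$-side analogue of the (deferred) Lemma~\ref{Bcomm:[y,f]}, and indeed the statement says as much ("Similarly as before, we can derive the next lemma from Lemma~\ref{Bcomm:[y^l,x]}"). The only place requiring care — and the step I would write out most explicitly — is the bookkeeping of which side of the group element the shifted polynomial lands on when applying the Leibniz identity, since getting $g^{\sigma}$ versus $g$ wrong would break the telescoping; once that is pinned down, the induction closes immediately and no further case analysis (beyond the $j=i$ versus $j\neq i$ split already handled in Lemma~\ref{Bcomm:[y^l,x]}) is needed.
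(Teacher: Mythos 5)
Your proposal is correct and follows the paper's own (terse) indication: reduce to monomials, take the base case from Lemma~\ref{Bcomm:[y^l,x]}, and close the induction via the Leibniz identity together with the divided-difference telescoping $\frac{fg-(fg)^\sigma}{y_i-y_k}=\frac{f-f^\sigma}{y_i-y_k}g^\sigma+f\frac{g-g^\sigma}{y_i-y_k}$ (and its $\overline{s}_{ki}$, $\tau_i$ analogues), after commuting the group element past the polynomial via (\ref{conj-y}). This is the same mechanism the paper spells out for the $x$-side analogue (Lemma~\ref{Bcomm:[y,f]}) in the Appendix, so there is nothing substantively different here.
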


Now we are ready to compute the Dunkl operator for $x_i$'s.

\begin{theorem} \label{BDunkl:x}
Let $W=W_{B_n}$. The action of $x_i$ on $\C[y_1,\ldots,y_n]
\otimes M$ is realized as follows. For any polynomial $f \in
\C[y_1,\ldots,y_n]$ and $m \in M$, we have
\begin{eqnarray*}
    x_i \circ (f \otimes m) &=&
    u \sum_{k\neq i} \frac{f - f^{s_{ki}}}{y_i - y_k}
    \otimes (1+c_k c_i)s_{k i} m \\
    && + u \sum_{k\neq i} \frac{f - f^{\overline{s}_{k i}}}{y_i +
    y_k} \otimes (1-c_k c_i)\overline{s}_{k i}m
    + \sqrt{2}v \frac{f - f^{\tau_i}}{2y_i} \otimes\tau_i m.
\end{eqnarray*}
\end{theorem}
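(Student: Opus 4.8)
The plan is to deduce the formula from the commutator identity of Lemma~\ref{Bcomm:[f,x]}, in the same way that Theorem~\ref{ADunkl:x} is deduced from Lemma~\ref{Acomm:[f,x]} in type $A$. Under the identification $M_x = \C[y_1,\ldots,y_n]\otimes M$, the copy $1\otimes M$ of $M$ is exactly the subspace on which every $x_i$ acts by zero (that is how $M$ was turned into an $\aH_x$-module before inducing), while $f\in\C[y_1,\ldots,y_n]$ acts by left multiplication on the polynomial factor and $K=\Cl_n\rtimes\C W$ acts diagonally. So the first step is the elementary computation, carried out inside $\dahc_W$,
$$
x_i\circ(f\otimes m) \;=\; (f x_i + [x_i,f])\cdot m \;=\; f\otimes(x_i m) + [x_i,f]\otimes m \;=\; [x_i,f]\otimes m ,
$$
since $x_i m = 0$; here $[x_i,f]\otimes m$ denotes the result of letting the element $[x_i,f]=\sum_j p_j a_j$ (with $p_j\in\C[y_1,\ldots,y_n]$, $a_j\in K$) act on $m$, namely $\sum_j p_j\otimes a_j m$.

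The second step is to write $[x_i,f]=-[f,x_i]$ and apply Lemma~\ref{Bcomm:[f,x]}. That lemma exhibits $[f,x_i]$ as a finite sum of terms of the form (polynomial in the $y_j$'s) times an element of $K$ --- each of the three displayed ratios being a genuine polynomial because $y_i-y_k$, $y_i+y_k$, respectively $y_i$ divide the corresponding numerators. Substituting and reversing the overall sign gives precisely
$$
x_i\circ(f\otimes m) = u\sum_{k\neq i}\frac{f-f^{s_{ki}}}{y_i-y_k}\otimes(1+c_kc_i)s_{ki}m \;+\; u\sum_{k\neq i}\frac{f-f^{\overline{s}_{ki}}}{y_i+y_k}\otimes(1-c_kc_i)\overline{s}_{ki}m \;+\; \sqrt{2}\,v\,\frac{f-f^{\tau_i}}{2y_i}\otimes\tau_i m ,
$$
which is the assertion; in particular the right-hand side is a well-defined operator on $\C[y_1,\ldots,y_n]\otimes M$.

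In this form the theorem is immediate; the real content lies one step below. Lemma~\ref{Bcomm:[f,x]} follows from the single-power identities of Lemma~\ref{Bcomm:[y^l,x]} by the same monomial-by-monomial induction used to pass from Lemma~\ref{Acomm:[y^l,x]} to Lemma~\ref{Acomm:[f,x]}, and Lemma~\ref{Bcomm:[y^l,x]} (whose proof is deferred to the Appendix) is where the only genuinely delicate bookkeeping occurs: one must simultaneously track the signs produced by commuting powers of $y_j$ past $x_i$ through the defining relations (\ref{Byjxi})--(\ref{Byixi}), the signs produced by moving the Clifford monomials $c_kc_i$ through polynomials in the $y$'s, and the extra contributions attached to $\overline{s}_{ki}$ and $\tau_i$ that have no type $A$ counterpart. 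Once that is settled, nothing further is needed for Theorem~\ref{BDunkl:x}.
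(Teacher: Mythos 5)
Your proof is correct and follows the same route as the paper: reduce $x_i\circ(f\otimes m)$ to $[x_i,f]\otimes m$ using that $x_i$ kills $1\otimes M$, then invoke Lemma~\ref{Bcomm:[f,x]} (with the sign flip $[x_i,f]=-[f,x_i]$). Your added remarks that the three ratios are genuine polynomials and that the substantive work is buried in Lemma~\ref{Bcomm:[y^l,x]} are accurate but not needed beyond what the paper records.
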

\begin{proof}
We observe that
$$
x_i \circ (f \otimes m) = [x_i,f]\otimes m + f \otimes x_i m =
[x_i,f]\otimes m.
$$
Now the result follows from Lemma
    ~\ref{Bcomm:[f,x]}.
\end{proof}

\subsection{The Dunkl Operators for $\dahc_{D_{n}}$}

Below, the actions of $x_i$'s and $y_i$'s are realized as Dunkl
operators. Due to the similarity of the bracket relations $\lbrack
-,-]$ in $\dahc_{D_{n}}$ and $\dahc_{B_{n}}$ (e.g. compare the
type $D$ relation (\ref{Dyixi}) with the type $B$ relation
(\ref{Byixi})), the formulas below for type $D_n$ are obtained
from their type $B_n$ counterparts in the previous subsection by
dropping the terms involving the parameter $v$. The proofs are the
same as for the type $B$, and thus will be skipped.

\begin{lemma} \label{Dcomm:[y,f]}
Let $W=W_{D_n}$, and $f \in \C[x_1,\ldots,x_n]$. Then the
following holds in $\dahc_W$:
\begin{eqnarray*}
\lbrack y_{i},f] &=&
 -u \sum_{k\neq i} \left(\frac{f -
f^{s_{ki}}}{x_i - x_k} +
\frac{f - f^{\overline{s}_{k i}}}{x_i + x_k}c_k c_i\right)s_{k i} \\
&& \quad - u \sum_{k\neq i} \left(\frac{f - f^{\overline{s}_{k
i}}}{x_i + x_k} - \frac{f - f^{s_{k i}}}{x_i - x_k}c_k
c_i\right)\overline{s}_{k i}.
\end{eqnarray*}
\end{lemma}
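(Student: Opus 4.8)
The plan is to mirror the strategy already used for type $B_n$, specializing away the parameter $v$. Since Lemma~\ref{Dcomm:[y,f]} computes $[y_i,f]$ for an arbitrary polynomial $f\in\C[x_1,\ldots,x_n]$, it suffices to establish the type $D$ analogue of Lemma~\ref{Bcomm:[y,x^l]}, i.e. the commutator $[y_i,x_j^l]$ and $[y_i,x_i^l]$, and then propagate from monomials $x_j^l$ to arbitrary monomials $x_1^{l_1}\cdots x_n^{l_n}$ by the Leibniz-type induction already used in the proof of Lemma~\ref{Acomm:[y,f]}.

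First I would prove the power-of-one-variable case by induction on $l$. For $l=1$ the formulas are exactly the defining relations (\ref{Dyjxi})--(\ref{Dyixi}). For the inductive step I would write $[y_i,x_j^{l+1}]=[y_i,x_j^l]\,x_j + x_j^l\,[y_i,x_j]$ and use three pieces of bookkeeping: (i) the commutation of $x_j$ past $s_{ki}$ and $\overline{s}_{ki}$, which sends $x_j\mapsto x_j^{s_{ki}}$ (resp. $x_j^{\overline s_{ki}}$), i.e. swaps/sign-swaps the index as appropriate; (ii) the Clifford relations (\ref{commCl}) governing how $x_j$ moves past $c_kc_i$, which on the relevant terms produces the sign flips $x_j\leftrightarrow -x_j$ reflected in the $x_i^l-(-x_k)^l$ numerators; (iii) the elementary rational-function identities of the shape $\frac{A}{x-y}\cdot x + \text{(base term)} = \frac{A\cdot x + \cdots}{x-y}$ that collapse the two summands into a single closed form. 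This is precisely the computation carried out for type $B$ in the Appendix, with every $v$-term and every $\tau_i$-term deleted; deleting those terms is consistent because $\tau_i\notin W_{D_n}$ and the $D_n$ defining relations (\ref{Dyjxi})--(\ref{Dyixi}) contain no $v$.

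Having the single-variable formula, I would then run the induction on the number of variables exactly as in Lemma~\ref{Acomm:[y,f]}: from $[y_i,x_1^{l_1}\cdots x_a^{l_a}x_{a+1}^{l_{a+1}}]=[y_i,x_1^{l_1}\cdots x_a^{l_a}]x_{a+1}^{l_{a+1}}+x_1^{l_1}\cdots x_a^{l_a}[y_i,x_{a+1}^{l_{a+1}}]$, one commutes the extra monomial factors past $s_{ki}$ and $\overline{s}_{ki}$ (each contributing the appropriate permuted/sign-changed polynomial in the numerator) and combines terms over the common denominators $x_i-x_k$ and $x_i+x_k$; the two contributions assemble into $\frac{f-f^{s_{ki}}}{x_i-x_k}$ and $\frac{f-f^{\overline s_{ki}}}{x_i+x_k}$ by linearity over monomials. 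Since the statement merely asserts that this bracket relation holds in $\dahc_{D_n}$, no PBW input is needed beyond what Theorem~\ref{PBW:DBdaha} already gives.

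The main obstacle is purely bookkeeping rather than conceptual: one must track carefully, term by term, how moving $x_j$ (or a general monomial) past the mixed objects $(1\pm c_kc_i)\overline{s}_{ki}$ interacts \emph{simultaneously} with the sign change coming from the Clifford element $c_kc_i$ and the sign change coming from the barred reflection $\overline{s}_{ki}$ acting on coordinates $x_i$ and $x_k$. The two sign sources interact so as to interchange the roles of the $s$-term and the $\overline{s}$-term (which is exactly why the numerators $f-f^{s_{ki}}$ and $f-f^{\overline{s}_{ki}}$ appear crossed with $c_kc_i$ in Lemma~\ref{Dcomm:[y,f]}), and getting every sign right in the telescoping is the delicate point. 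However, since these verifications are literally the type $B$ ones with the $v$-dependent terms struck out, I would simply cite the type $B$ proof in the Appendix and note the specialization, as the surrounding text already does; hence the proof is omitted.
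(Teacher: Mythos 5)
Your proposal is correct and matches the paper's approach exactly: the paper states that the type $D_n$ formulas, including this lemma, are obtained from the type $B_n$ counterparts by dropping the $v$-dependent terms, and that the proofs are the same. You correctly identify the two-stage structure (a power-of-one-variable lemma analogous to Lemma~\ref{Bcomm:[y,x^l]}, then a Leibniz induction over monomials) and correctly observe that the $\tau_i$ and $v$ terms simply disappear because $\tau_i\notin W_{D_n}$.
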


\begin{lemma} \label{Dcomm:[f,x]}
Let $W=W_{D_n}$, and $f \in \C[y_1,\ldots,y_n]$. Then the
following identity holds in $\dahc_W$:
\begin{eqnarray*}
\lbrack f,x_i] &=&
 - u \sum_{k\neq i} \frac{f - f^{s_{ki}}}{y_i - y_k}
    (1+c_k c_i)s_{k i}
     - u \sum_{k\neq i} \frac{f - f^{\overline{s}_{k i}}}
    {y_i + y_k} (1-c_k c_i)\overline{s}_{k i}.
\end{eqnarray*}
\end{lemma}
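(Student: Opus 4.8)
The plan is to reduce Lemma~\ref{Dcomm:[f,x]} to the commutator formulas $[y_j^l,x_i]$ and $[y_i^l,x_i]$ for type $D_n$, exactly as in the type $A$ and type $B$ cases. First I would record the type $D_n$ analogue of Lemma~\ref{Bcomm:[y^l,x]}, namely
\begin{eqnarray*}
\lbrack y_{j}^l,x_{i}] &=& u \left(\frac{y_j^l - y_i^l}{y_j -
y_i}(1+c_j c_i)s_{i j} - \frac{y_j^l - (-y_i)^l}{y_j + y_i}(1-c_j
c_i)\overline{s}_{i j}\right), \\
\lbrack y_{i}^l,x_{i}] &=& -u \sum_{k\neq i} \frac{y_i^l -
y_k^l}{y_i - y_k} (1+c_k c_i)s_{k i} - u \sum_{k\neq i}
\frac{y_i^l - (-y_k)^l}{y_i + y_k} (1-c_k c_i)\overline{s}_{k i},
\end{eqnarray*}
which is obtained from Lemma~\ref{Bcomm:[y^l,x]} by dropping the $\tau_i$ term, and whose proof is identical to the (Appendix) proof of Lemma~\ref{Bcomm:[y^l,x]} once one discards all $\tau_i$ contributions (the defining relation (\ref{Dyjxi}--\ref{Dyixi}) is (\ref{Byjxi}--\ref{Byixi}) without the $-\sqrt 2 v\tau_i$ term). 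These base cases follow from (\ref{Dyjxi}--\ref{Dyixi}) by an induction on $l$, using $[y^{l},x_i] = y[y^{l-1},x_i] + [y,x_i]y^{l-1}$ together with the commutation relations (\ref{comm})--(\ref{conj-c}) to move $y$'s past $s_{ki}$, $\overline{s}_{ki}$ and $c_kc_i$ — the only subtlety being the sign flips $y_k \mapsto -y_k$ produced by $\overline{s}_{ki}$ when pushing a $y$-power through it, which is precisely what produces the two distinct rational kernels $\frac{y_i^l - y_k^l}{y_i - y_k}$ and $\frac{y_i^l - (-y_k)^l}{y_i + y_k}$.

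Next I would deduce Lemma~\ref{Dcomm:[f,x]} from the two displayed formulas above by $\C$-linearity and an induction on the number of variables appearing in a monomial, via the Leibniz-type identity
$$
\lbrack f g,x_i] = f[g,x_i] + [f,x_i]g, \qquad f,g\in\C[y_1,\ldots,y_n].
$$
Since $s_{ki}$ and $\overline{s}_{ki}$ act on $\C[y_1,\ldots,y_n]$ by the substitutions $y_k \leftrightarrow y_i$ (resp. $y_k \leftrightarrow -y_i$ with the other sign changes), one checks on monomials that the operator-valued coefficients
$$
\frac{f - f^{s_{ki}}}{y_i - y_k}, \qquad \frac{f - f^{\overline{s}_{k i}}}{y_i + y_k}
$$
are again polynomials in the $y$'s, and that the inductive step matches: writing $f = y_j^l g$ one applies the base formula for $[y_j^l,x_i]$, the inductive hypothesis for $[g,x_i]$, and moves the remaining $y$-powers past $s_{ki}$ and $\overline{s}_{ki}$ (picking up exactly the sign changes that turn $g^{s_{ki}}, g^{\overline{s}_{ki}}$ into $f^{s_{ki}}, f^{\overline{s}_{ki}}$). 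This is the same bookkeeping carried out for Lemma~\ref{Bcomm:[f,x]} (hence for Lemma~\ref{Acomm:[f,x]}), now with the $v$-terms absent, so no new ideas are needed.

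I do not expect a genuine obstacle here: the whole point of the remark preceding Lemma~\ref{Dcomm:[y,f]} is that the type $D$ bracket relations are literally the type $B$ ones with the $\tau_i$-terms deleted, and since $\tau_i\in W_{B_n}\setminus W_{D_n}$ never reappears once it is absent, every step of the type $B$ argument specializes verbatim. The only mild care needed is to confirm that deleting the $v$-terms does not disturb the interplay between the $s_{ki}$- and $\overline{s}_{ki}$-contributions — but these two families of terms never interact in the inductive step (each is handled by moving $y$-powers through the corresponding group element independently), so the type $D$ proof is complete once the type $B$ proof is. Accordingly I would simply state Lemma~\ref{Dcomm:[f,x]} together with the intermediate Lemma~\ref{Dcomm:[y,f]} and indicate that the proofs are obtained from those of Lemmas~\ref{Bcomm:[y,f]} and~\ref{Bcomm:[f,x]} by omitting the terms involving $v$ and $\tau_i$.
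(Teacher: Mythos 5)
Your proposal is correct and matches the paper's approach exactly: the paper itself states that the type $D_n$ Dunkl lemmas and their proofs are obtained from the type $B_n$ ones (Lemmas~\ref{Bcomm:[y^l,x]} and \ref{Bcomm:[f,x]}) verbatim by dropping the $v$-terms, and it proves the type $B$ version by first establishing the commutator formula for $y$-powers via induction on $l$ and then extending to arbitrary monomials through the Leibniz identity, which is precisely your reduction. The paper skips writing out the type $D$ proofs for this reason, so there is nothing missing in your argument.
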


\begin{theorem} 
Let $W=W_{D_n}$, and let $M$ be a $K$-module. The action of $y_i$
on $\C[x_1,\ldots,x_n] \otimes M$ is realized as a Dunkl operator
as follows. For any polynomial $f \in \C[x_1,\ldots,x_n]$ and $m
\in M$, we have
\begin{eqnarray*}
    y_i \circ (f \otimes m) &=&
     - u \sum_{k\neq i} \left( \frac{f - f^{s_{ki}}}{x_i - x_k} + \frac{f -
    f^{\overline{s}_{k i}}}{x_i + x_k}c_k c_i\right) \otimes s_{k i}
    m \\
    && \quad - u \sum_{k\neq i} \left(\frac{f - f^{\overline{s}_{k i}}}{x_i +
    x_k} - \frac{f - f^{s_{k i}}}{x_i - x_k}c_k c_i\right)\otimes \overline{s}_{k
    i}m.
\end{eqnarray*}
\end{theorem}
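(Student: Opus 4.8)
The plan is to obtain the formula by the same two‑step argument already used for the $y_i$‑Dunkl operators in types $A_{n-1}$ and $B_n$ above. First I would unwind the module structure: under the identification $M_y=\C[x_1,\ldots,x_n]\otimes M$ coming from the triangular decomposition $\dahc_{D_n}\cong\C[\h^*]\otimes K\otimes\C[\h]$, the subalgebra $\aH_y$ (generated by $K$ and the $y_i$) acts on $1\otimes M$ through its quotient $K$, so that $y_i\cdot(1\otimes m)=0$, while each $x_i$ acts by left multiplication in the first tensor factor. Hence, for $f\in\C[x_1,\ldots,x_n]$ and $m\in M$,
\[
y_i\circ(f\otimes m) = y_i f\otimes m = \bigl(f y_i + [y_i,f]\bigr)\otimes m = f\otimes y_i m + [y_i,f]\otimes m = [y_i,f]\otimes m .
\]
Given this, the asserted identity is nothing but Lemma~\ref{Dcomm:[y,f]} read in the $\otimes$‑notation of $M_y$: one lets each $s_{ki}$ and $\overline{s}_{ki}$ occurring in the expression for $[y_i,f]$ act on $m$, lets the even Clifford elements $c_k c_i$ act on $m$ (without extra sign, as they are applied in $x$‑degree $0$), and lets the surviving rational functions in the $x$'s act by multiplication in the first tensor factor --- which is exactly how $K$ and $\C[\h^*]$ operate on $M_y$.

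Since Lemma~\ref{Dcomm:[y,f]} is already available, this essentially finishes the proof; for completeness I indicate how I would establish that lemma. I would run the proof of its type $B_n$ analogue, Lemma~\ref{Bcomm:[y,f]}, \emph{verbatim, simply discarding every $\tau_i$‑term}, since relations (\ref{Dyjxi}--\ref{Dyixi}) agree with (\ref{Byjxi}--\ref{Byixi}) once the summand $-\sqrt{2}\,v\tau_i$ is deleted. Concretely: first prove the type $D_n$ analogue of Lemma~\ref{Bcomm:[y,x^l]}, namely the closed formulas for $[y_i,x_j^l]$ and $[y_i,x_i^l]$, by induction on $l$ --- the base case $l=1$ being (\ref{Dyjxi})/(\ref{Dyixi}) and the inductive step commuting $y_i$ past one more factor $x_j$ using $x_j c_j = -c_j x_j$, $x_j c_k = c_k x_j$ ($k\ne j$), and the conjugation rule $w c w^{-1} = w(c)$ of (\ref{conj-c}); the finite geometric sums such as $\sum_{a+b=l-1} x_j^a x_i^b = (x_j^l - x_i^l)/(x_j - x_i)$ and their sign‑twisted versions produce the displayed difference quotients. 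Then pass from powers to arbitrary monomials, and hence to a general polynomial $f$, via the Leibniz identity $[y_i, fg] = [y_i,f]g + f[y_i,g]$ exactly as in the proof of Lemma~\ref{Acomm:[y,f]}; a telescoping collapses the partial sums into $(f - f^{s_{ki}})/(x_i - x_k)$ and $(f - f^{\overline{s}_{ki}})/(x_i + x_k)$.

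The one genuinely delicate point --- hence the main obstacle in establishing Lemma~\ref{Dcomm:[y,f]} --- is the sign bookkeeping forced by the anticommuting generators $c_i$: commuting $y_i$ across $x_j$ produces the even Clifford element $c_i c_j$ (equivalently $\pm c_k c_i$), and one must track precisely how it interacts with the transpositions $s_{ki}$ and, above all, with the sign‑change‑coupled transpositions $\overline{s}_{ki}$ (for instance the identity $c_k c_i\, f^{s_{ki}} = f^{\overline{s}_{ki}}\, c_k c_i$), as well as the extra sign produced each time $c_i$ is dragged past $x_i$. This is, however, precisely the subtlety already handled in type $B_n$, and since in type $D_n$ each $\overline{s}_{ki}$ is expressed through $s_n$ and ordinary transpositions (as recorded just before the definition of $\dahc_{D_n}$), the $D_n$ computation is a literal specialization of the $B_n$ one and no new cases arise. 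With Lemma~\ref{Dcomm:[y,f]} in hand, the theorem follows from the one‑line computation above, in complete parallel with Theorem~\ref{BDunkl:x} and its type $A_{n-1}$ counterpart.
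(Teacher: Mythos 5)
Your argument reproduces the paper's proof exactly: you first reduce the action to $y_i\circ(f\otimes m)=[y_i,f]\otimes m$ using the triangular decomposition and the fact that $y_i$ annihilates $1\otimes M$, then invoke Lemma~\ref{Dcomm:[y,f]}, which (as the paper also observes) is the type $B_n$ commutator identity of Lemma~\ref{Bcomm:[y,f]} with the $\tau_i$-terms dropped, proved by the same induction on monomials. This is the route the authors indicate for the type $D_n$ Dunkl theorems, so the proposal is correct and essentially identical to the paper's.
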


\begin{theorem} 
Let $W=W_{D_n}$, and let $M$ be a $K$-module. . The action of
$x_i$ on $\C[y_1,\ldots,y_n] \otimes M$ is realized as follows.
For any $f \in \C[y_1,\ldots,y_n]$ and $m \in M$, we have
\begin{eqnarray*}
    x_i \circ (f \otimes m) &=&
     u \sum_{k\neq i} \frac{f - f^{s_{ki}}}{y_i - y_k}
    \otimes (1+c_k c_i)s_{k i} m \\
    && \quad + u \sum_{k\neq i} \frac{f - f^{\overline{s}_{k i}}}{y_i +
    y_k} \otimes (1-c_k c_i)\overline{s}_{k i}m .
\end{eqnarray*}
\end{theorem}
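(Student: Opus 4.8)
The plan is to run verbatim the argument already used for the type $B_n$ statement in Theorem~\ref{BDunkl:x}, the type $D_n$ case being obtained from it simply by discarding all $\tau_i$ (parameter $v$) contributions --- exactly as Lemma~\ref{Dcomm:[f,x]} was deduced from Lemma~\ref{Bcomm:[f,x]}. First I would recall the setup: $M_x = \C[y_1,\dots,y_n]\otimes M = \text{Ind}_{\aH_x}^{\dahc_W} M$, where the $K$-module $M$ is turned into an $\aH_x$-module by letting every $x_j$ act as $0$. Under the identification $M_x\cong\C[\h]\otimes M$ provided by the PBW theorem (Theorem~\ref{PBW:DBdaha}), the subalgebra $\C[\h]=\C[y_1,\dots,y_n]$ acts by left multiplication on the first tensor factor, while $K=\Cl_n\rtimes\C W$ acts on $1\otimes m$ exactly as it does on $M$.

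The computation itself is then three lines. For $f\in\C[\h]$ and $m\in M$,
\[
x_i\circ(f\otimes m) = x_i f\otimes_{\aH_x} m = f x_i\otimes_{\aH_x} m + [x_i,f]\otimes_{\aH_x} m = [x_i,f]\otimes_{\aH_x} m,
\]
since $x_i\in\aH_x$ forces $f x_i\otimes_{\aH_x}m = f\otimes_{\aH_x} x_i m = 0$. Then I invoke Lemma~\ref{Dcomm:[f,x]} to expand $[x_i,f]=-[f,x_i]$ as an explicit finite sum $\sum_k g_k\kappa_k$, where the $g_k\in\C[\h]$ are the divided differences $\frac{f-f^{s_{ki}}}{y_i-y_k}$ and $\frac{f-f^{\overline{s}_{ki}}}{y_i+y_k}$, and the $\kappa_k\in K$ are the coefficients $(1+c_kc_i)s_{ki}$ and $(1-c_kc_i)\overline{s}_{ki}$. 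Since each $\kappa_k$ lies in $\aH_x$, it may be moved across $\otimes_{\aH_x}$, giving $g_k\kappa_k\otimes_{\aH_x}m = g_k\otimes_{\aH_x}(\kappa_k m)$, which under the identification above reads $g_k\otimes(\kappa_k m)$. Summing over $k$ reproduces the asserted formula verbatim.

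I do not anticipate a genuine obstacle at this stage: the entire weight of the argument has been pushed into Lemma~\ref{Dcomm:[f,x]}, and behind it into Lemma~\ref{Bcomm:[y^l,x]} (whose proof is relegated to the Appendix), where the sign bookkeeping forced by the anticommutation $x_ic_i = -c_ix_i$ and the twisting $wyw^{-1}=w(y)$ is already carried out. Two minor points are worth a line of comment: the quotients above are honest polynomials rather than merely rational functions, because $f-f^{s_{ki}}$ vanishes on the fixed hyperplane $\{y_i = y_k\}$ of $s_{ki}$ and $f-f^{\overline{s}_{ki}}$ vanishes on the fixed hyperplane $\{y_i + y_k = 0\}$ of $\overline{s}_{ki}$; and the step $g_k\kappa_k\otimes_{\aH_x}m = g_k\otimes(\kappa_k m)$ is precisely where one uses both the chosen identification $M_x\cong\C[\h]\otimes M$ and the inclusion $K\subseteq\aH_x$.
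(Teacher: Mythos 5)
Your proof is correct and follows precisely the route the paper takes (and explicitly delegates to the type $B$ argument): reduce $x_i\circ(f\otimes m)$ to $[x_i,f]\otimes m$ using that $x_i$ kills $M$, then expand $[x_i,f]$ via Lemma~\ref{Dcomm:[f,x]} and move the $K$-coefficients across the tensor product. The extra remarks on polynomiality of the divided differences and on where the induced-module identification is used are sound but not needed beyond what the paper records.
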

\subsection{The even center for $\dahc_W$}

Recall that the {\em even center} $\mathcal Z (A)$ of a superalgebra
$A$ consists of the even central elements of $A$. It turns out the
algebra $\dahc_W$ has a large center.

\begin{proposition} \label{CenDaHa}
Let $W$ be $W_{A_{n-1}}$, $W_{D_n}$ or $W_{B_n}$. The even center
$\mathcal Z(\dahc_W)$ contains $\C[y_1,\ldots, y_n]^W$ and $\C
[x_1^2,\ldots, x_n^2]^W$ as subalgebras. In particular, $\dahc_W$
is module-finite over its even center.
\end{proposition}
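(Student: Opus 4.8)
The proof has two parts: first exhibiting the commuting subalgebras that land in the even center, and then deducing module-finiteness. For the first part I would verify directly that the generators of $\C[y_1,\dots,y_n]^W$ and of $\C[x_1^2,\dots,x_n^2]^W$ commute with every generator of $\dahc_W$. These polynomial subalgebras are obviously commutative and are $W$-stable, so the only relations to check are the commutators with $x_i$ (resp.\ $y_i$) and the conjugation action of $W$ and of the Clifford generators $c_i$. The key computational input is already available in the Dunkl-operator lemmas: Lemma~\ref{Acomm:[f,x]}, Lemma~\ref{Bcomm:[f,x]}, and Lemma~\ref{Dcomm:[f,x]} give $[f,x_i]$ for $f\in\C[y_1,\dots,y_n]$ as a sum of terms each carrying a factor $f - f^{s_{ki}}$, $f - f^{\overline{s}_{ki}}$, or $f - f^{\tau_i}$. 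Hence if $f$ is $W$-invariant, every such difference vanishes and $[f,x_i]=0$; combined with the obvious relations $[f,y_j]=0$, $[f,c_i]=0$ (from \eqref{comm}), and $wfw^{-1}=f$, this shows $f$ is central. Since $f$ is a polynomial in the $y_i$, which are even, $f$ is even, so $\C[y_1,\dots,y_n]^W\subseteq\mathcal Z(\dahc_W)$.

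\textbf{The $x$-side.} The symmetric statement for the $x_i$'s is where the extra square appears. From Lemmas~\ref{Bcomm:[y,f]} and \ref{Dcomm:[y,f]} (and their type $A$ analogue Lemma~\ref{Acomm:[y,f]}) one has, for $g\in\C[x_1,\dots,x_n]$, that $[y_i,g]$ is a sum of terms involving $g - g^{s_{ki}}$, $g - g^{\overline{s}_{ki}}$, $g c_k c_i - c_k c_i g^{\overline{s}_{ki}}$, and $g - g^{\tau_i}$ (in type $B$). For $g$ to be central we need all these to vanish. Invariance under all $s_{ki}$, $\overline{s}_{ki}$ forces $g\in\C[x_1^2,\dots,x_n^2]^W$ only after we also kill the sign changes $\tau_i$: note $\tau_i$ (and the $\overline{s}_{ij}$) negate $x_i$, so a polynomial invariant under the full group $W_{B_n}$ of signed permutations is automatically a symmetric polynomial in the $x_i^2$, and conversely the generators $p_r=\sum_j x_j^{2r}$ of $\C[x_1^2,\dots,x_n^2]^W$ are $W_{B_n}$-invariant. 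The one subtlety is the mixed term $g c_k c_i - c_k c_i g^{\overline{s}_{ki}}$: since $c_k c_i$ anticommutes with $c_k$ and $c_i$ and commutes with the other $c_j$, moving $g$ past $c_k c_i$ replaces $x_k\mapsto -x_k$, $x_i\mapsto -x_i$ and fixes the other $x_j$'s, i.e.\ $c_k c_i\, g = g^{\overline{s}_{ki}\cdot(\text{no permutation})} c_k c_i$; thus this term becomes $(g^{\sigma} - g^{\overline{s}_{ki}}) c_k c_i s_{ki}$ where $\sigma$ is the simultaneous sign change at $i,k$, which vanishes precisely when $g$ is invariant under sign changes at pairs — again guaranteed for $g\in\C[x_1^2,\dots,x_n^2]^W$ (in type $A$ there is no $\overline{s}$ term, and one checks the analogous cancellation using that $c_k c_i$-conjugation matches $\overline{s}_{ki}$ up to the transposition). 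Then $[y_i,g]=0$; together with $[x_j,g]=0$, $[c_i,g]=0$ (from \eqref{commCl}, here we crucially use that the odd Clifford contributions organize into $c_k c_i$, which is \emph{even}, so $g$ commutes with each $c_i$ exactly because $g$ has even degree in each $x$), and $wgw^{-1}=g$, we get $g\in\mathcal Z(\dahc_W)$; and $g$ is even. This handles all three types uniformly.

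\textbf{Module-finiteness.} Having the two polynomial subalgebras in the center, I would invoke the PBW basis (Theorem~\ref{PBW:DBdaha}): $\dahc_W\cong\C[\h^*]\otimes\Cl_n\otimes\C W\otimes\C[\h]$ as a vector space. As a module over the central subalgebra $\C[x_1^2,\dots,x_n^2]^W\otimes\C[y_1,\dots,y_n]^W$, the factor $\Cl_n\otimes\C W$ is finite-dimensional, and $\C[\h^*]$ is finite over $\C[x_1^2,\dots,x_n^2]^W$ (it is finite over $\C[x_1^2,\dots,x_n^2]$, which is finite over its $W$-invariants by classical invariant theory, $W$ being finite) while $\C[\h]$ is finite over $\C[y_1,\dots,y_n]^W$ for the same reason. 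Hence $\dahc_W$ is finitely generated as a module over a (finitely generated, commutative) subalgebra of its even center, so it is module-finite over $\mathcal Z(\dahc_W)$.

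\textbf{Main obstacle.} The routine but genuinely delicate point is the $x$-side computation of $[y_i,g]$: one must confirm that the Clifford-twisted term $g\, c_k c_i - c_k c_i\, g^{\overline{s}_{ki}}$ (and, in type $B$, the $\tau_i$ term) really does cancel for $g$ symmetric in the $x_j^2$, which amounts to bookkeeping how $c_k c_i$-conjugation interacts with the reflections $s_{ki}$ versus $\overline{s}_{ki}$. Everything else is a direct consequence of the Dunkl-operator lemmas and the PBW theorem already established.
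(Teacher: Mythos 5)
Your proposal is correct and follows essentially the same route as the paper: the paper also observes that for $f\in\C[y_1,\ldots,y_n]^W$ the commutator formulas of Lemmas~\ref{Acomm:[f,x]}, \ref{Bcomm:[f,x]}, \ref{Dcomm:[f,x]} have every summand carrying a vanishing difference $f-f^{w}$, and symmetrically for $f\in\C[x_1^2,\ldots,x_n^2]^W$ it uses Lemmas~\ref{Acomm:[y,f]}, \ref{Bcomm:[y,f]}, \ref{Dcomm:[y,f]}, noting first that such an $f$ automatically commutes with $\Cl_n$ because it is a polynomial in the $x_i^2$. The paper is terser than you on the Clifford-twisted term: it simply records that $f$ commutes with $\Cl_n$ up front, whereupon for types $B,D$ (where $c_kc_i$ is a coefficient multiplying the difference $f-f^{\overline{s}_{ki}}$) the term vanishes by $W$-invariance alone, and for type $A$ (where the lemma has the form $f\,c_kc_i-c_kc_i\,f^{s_{ki}}$) centrality in $\Cl_n$ lets one pull $c_kc_i$ out and again reduce to $f-f^{s_{ki}}=0$; your slightly more laborious bookkeeping that converts $c_kc_i$-conjugation into the simultaneous sign change at $i,k$ reaches the same conclusion, although you quoted the Clifford-sandwich shape $g\,c_kc_i-c_kc_i\,g^{\overline{s}_{ki}}$ which in the paper occurs only in the type $A$ lemma (with $s_{ki}$, not $\overline{s}_{ki}$); this does not affect the validity of your argument. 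Your module-finiteness paragraph is also the paper's (unspelled-out) argument: PBW decomposition plus finiteness of $\C[\h^*]$ over $\C[x_1^2,\ldots,x_n^2]^W$, of $\C[\h]$ over $\C[y_1,\ldots,y_n]^W$, and finite-dimensionality of $\Cl_n\rtimes\C W$.
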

\begin{proof}
Let $f\in\C[y_1,\ldots, y_n]^W$. Then by the definition of
$\dahc_W$, $f$ commutes with $\Cl_n$, $W$, and $y_i$ for all
$1\leq i \leq n$. Since $f = f^{w}$ for all $w \in W$, it follows
by Lemmas \ref{Acomm:[f,x]}, \ref{Bcomm:[f,x]} or
\ref{Dcomm:[f,x]} (for type $A, D$ or $B$ respectively) that
$\lbrack f, x_i] = 0$ for each $i$. Hence $f$ commutes with
$\Cl_n$, $W$, and $\C[x_1,\ldots,x_n]$. Therefore $f$ is in the
even center $\mathcal Z(\dahc_W)$.

Suppose now that $f \in \C [x_1^2,\ldots, x_n^2]^W$, then by the
definition of $\dahc_W$, $f$ commutes with $\Cl_n$, $W$, and $x_i$
for all $1\leq i \leq n$. By Lemma \ref{Acomm:[y,f]},
\ref{Bcomm:[y,f]} or \ref{Dcomm:[y,f]} (for type $A, D$ or $B$
respectively), we have $\lbrack y_i, f] = 0$ for each $i$. Therefore
$f$ is in the even center.

The module-finiteness over the even center now follows from the
PBW property of $\dahc_W$ (see Theorem~\ref{PBW:DBdaha}).
\end{proof}

\section{Rational spin double affine Hecke algebras (sDaHa)}
\label{sec:sdaha}

In this section, we introduce the rational spin double affine
Hecke algebras associated to the Weyl group $W$ of type $A_{n-1},
D_n$ and $B_n$, and then establish their PBW property.
\subsection{Elements in $\C W^-$ of order $2$}
Recall that the spin group algebra $\C W^-$ has a presentation
with generator $t_i$ given in Section~\ref{sec:finite}. Introduce
the following notation
\begin{eqnarray*}
t_{i\uparrow j} &=& \left \{
 \begin{array}{ll}
 t_it_{i+1}\cdots t_j, & \text{ if } i\leq j\\
 1, &\text{ otherwise},
 \end{array}
 \right. \\
t_{i\downarrow j} &=& \left \{
 \begin{array}{ll}
 t_it_{i-1}\cdots t_j, & \text{ if } i\geq j\\
 1, &\text{ otherwise}.
 \end{array}
 \right.
\end{eqnarray*}
Define the following odd elements in $\C W^-$ of order $2$, which
are analogs of reflections in $W$, for $1\leq i\,<j\leq n$:
{\allowdisplaybreaks
\begin{eqnarray*}
 t_{i j} \equiv [i,j]
 &=& (-1)^{j-i-1}t_{j-1}\ldots t_{i+1}t_{i}t_{i+1}\ldots t_{j-1}\\
 t_{ji} \equiv {[}j,i] &= & -[i,j] \\
\overline{t}_{i j} \equiv \overline{[i,j]}
 &=& \left \{
 \begin{array}{ll}
  (-1)^{j-i-1} t_{j \uparrow n-1} t_{i \uparrow n-2}
  {t}_{n}
   t_{n-2 \downarrow i} t_{n-1 \downarrow j}, & \text{for type } D_n \\
  (-1)^{j-i} t_{j \uparrow n-1} t_{i \uparrow n-2}
   {t}_{n}{t}_{n-1}{t}_n
   t_{n-2 \downarrow i} t_{n-1 \downarrow j}, & \text{for type } B_n
  \end{array}
  \right.
\\
\overline{t}_{ji} \equiv \overline{[j,i]} &=& \overline{[i,j]} \\
 \overline{t}_i \equiv \overline{[i]}
  &=& (-1)^{n-i} t_i\cdots t_{n-1}t_n t_{n-1}\cdots t_i \qquad (1\le i \leq n).
\end{eqnarray*}
}

Note the natural inclusions of algebras $\C W_{A_{n-1}}^- \leq \C
W_{D_n}^- \leq \C W^-_{B_n}$. In particular, $t_1,\ldots, t_{n-1}$
and ${t}_{n}{t}_{n-1}{t}_n$ generate a subalgebra of $\C
W^-_{B_n}$ which is isomorphic to $\C W_{D_n}^-$ (where
$\,-{t}_{n}{t}_{n-1}{t}_n$ corresponds to the $n$-th generator for
$\C W_{D_n}^-$). Hence, the notations $[i,j], \overline{[i,j]}$
here are consistent with such a subalgebra structure. Although we
will not use it in this paper, we can show for $i<j$ that
$\overline{[i,j]} ={[}j,n] [i,n-1] {t}_{n} [i,n-1] [j,n]$.

\subsection{The algebra $\sdaha_W$ of type $A_{n-1}$}

The following algebra $\sdaha_{A_{n-1}}$ was introduced in
\cite{W1} under the notation of $\mathcal B_u$. We recall the
definition here for the convenience of the subsequent subsections.

\begin{definition}
Let $u\in \C$, and let $W =W_{A_{n-1}}$. The rational spin double
affine Hecke algebra of type $A_{n-1}$, denoted by $\sdaha_W$ or
$\sdaha_{A_{n-1}}$, is the algebra generated by $\xi_{i},y_{i}$ for
$1\leq i\leq n$ and $\C W^-$, subject to the following relations:
{\allowdisplaybreaks
\begin{align}
%
%
%
y_{i}y_{j} =y_{j}y_{i}, & \quad
%
\xi_{i}\xi_{j} =-\xi_{j}\xi_{i} \quad (i\neq j) \label{xi} \\
t_{i}y_{i} =y_{i+1}t_{i}, & \quad t_{i}\xi_{i} =-\xi_{i+1}t_{i}
 \\
t_{i}y_{j} =y_{j}t_{i}, & \quad t_{i}\xi_{j} =-\xi_{j}t_{i},\quad
(j\neq i,i+1) \label{sapart} \\
 \nonumber \\
\lbrack y_{j},\xi_{i}] & =-u[i,j] \quad \qquad (i\neq j)\\
\lbrack y_{i},\xi_{i}] & = u\sum_{k\neq i} [i,k].
\end{align}
}
\end{definition}
\subsection{The algebra $\sdaha_W$ of type $D_{n}$}

\begin{definition}
Let $u\in \C$, and let $W =W_{D_n}$. The rational spin double affine
Hecke algebra of type $D_n$, denoted by $\sdaha_W$ or
$\sdaha_{D_n}$, is the algebra generated by $\xi_{i},y_{i}$ for
$1\leq i\leq n$ and $\C W^-$, subject to the relations
(\ref{xi}--\ref{sapart}) and the following additional relations:
{\allowdisplaybreaks
\begin{align*}
t_{n}y_{n} =-y_{n-1}t_{n}, & \quad
 t_{n}\xi_{n} =-\xi_{n-1}t_{n}
 \\
t_{n}y_{j} =y_{j}t_{n}, & \quad t_{n}\xi_{j} =-\xi_{j}t_{n},\quad
\quad (j\neq n-1,n) \\
\lbrack y_{j},\xi_{i}] & =-u[i,j]+u\overline{[i,j]}\quad (i\neq j) \\
\lbrack y_{i},\xi_{i}] & = u\sum_{k\neq i}\left( [i,k]+\overline
{[i,k]}\right).
\end{align*}
}
\end{definition}

\subsection{The algebra $\sdaha_W$ of type $B_n$}

\begin{definition}
Let $u,v\in \C$, and $W=W_{B_n}$. The rational spin double affine
Hecke algebra of type $B_n$, denoted by $\sdaha_W$ or
$\sdaha_{B_n}$, is the algebra generated by $\xi_{i},y_{i}$ for
$1\leq i\leq n$ and $\C W_{B_n}^-$, subject to the relations
(\ref{xi}--\ref{sapart}) and the following additional relations:
{\allowdisplaybreaks
\begin{align*}
t_{n}y_{n}=-y_{n}t_{n}, &\quad t_{n}\xi_{n} = -\xi_{n}t_{n}
\\
t_{n}y_{j} =y_{j}t_{n}, & \quad t_{n}\xi_{j} =-\xi_{j}t_{n},\quad
\quad (j\neq n) \\
\lbrack y_{j},\xi_{i}] & =-u[i,j]+u\overline{[i,j]}\quad (i\neq j) \\
\lbrack y_{i},\xi_{i}]
 & = u\sum_{k\neq i}\left( [i,k]+\overline {[i,k]}\right) +v \overline{[i]}.
\end{align*}
}
\end{definition}
We write $\sdaha_W (u,v)$ for $\sdaha_W$ to indicate the
dependence on $u,v$ if necessary.
%
%
\subsection{Isomorphism of superalgebras}

The algebra $\sdaha_W$ contains several distinguished subalgebras:
the skew-polynomial algebra $\Cl[\xi_1,\ldots,\xi_n]$, the spin
Weyl group algebra $\C W^-$, and the polynomial algebra
$\C[y_1,\ldots,y_n]$. The algebra $\sdaha_W$ has a superalgebra
structure with $y_i$ even and $\xi_i, t_i$ odd for all $i$.

\begin{lemma} \label{identify}
Let $W$ be one of the Weyl groups $W_{A_{n-1}}$, $W_{D_n}$ or
$W_{B_n}$. The map $\Phi: \Cl_n \rtimes \C W \rightarrow \Cl_n
\otimes \C W^-$ (which is an isomorphism by
Theorem~\ref{th:isofinite}) sends
\begin{align}
(c_k-c_i) s_{ik} &\longmapsto -\sqrt{-2}\;[k,i] \label{transpositionA} \\
(c_k +c_i) \overline{s}_{ik} &\longmapsto
-\sqrt{-2}\;\overline{[k,i]}
 \label{transpositionD} \\
c_i \tau_i &\longmapsto -\sqrt{-1} \, \overline{[i]}
\label{transpositionB}
\end{align}
for $i \neq k$, whenever it is applicable.
\end{lemma}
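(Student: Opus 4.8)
The strategy is to verify the three image formulas directly by applying the isomorphism $\Phi$ of Theorem~\ref{th:isofinite}, which extends the identity on $\Cl_n$ and sends $s_i \mapsto -\sqrt{-1}\,\be_i t_i$. Since $\Phi$ is a homomorphism of superalgebras, it suffices to express each of $s_{ik}$, $\overline{s}_{ik}$, $\tau_i$ in terms of the simple generators $s_j$, apply $\Phi$ factor by factor (tracking the Koszul signs coming from the odd elements $c_j$ and $t_j$), and then simplify the resulting product in $\Cl_n \otimes \C W^-$ using the Clifford relations \eqref{clifford} and the relations among the $t_j$ in Table~2. In parallel one rewrites the left-hand sides $(c_k-c_i)s_{ik}$, etc., and matches. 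The appearance of $\sqrt{-2}$ and $\sqrt{-1}$ is forced by the normalization $\be_i^2=1$: for type $A_{n-1}$ one has $\be_i = \tfrac1{\sqrt2}(c_i-c_{i+1})$, so $c_k-c_i = \sqrt2\,\be_{?}$ only after an appropriate product of simple-root vectors is assembled, and the imaginary unit enters through $s_i \mapsto -\sqrt{-1}\,\be_i t_i$.

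First I would treat \eqref{transpositionA}. Write $s_{ik}$ (for $i<k$, say) as the standard reduced word $s_{k-1}\cdots s_{i+1} s_i s_{i+1}\cdots s_{k-1}$ in $W_{A_{n-1}}$, apply $\Phi$ to each factor, and collect the Clifford part and the $\C W^-$ part separately, being careful that moving a $t_j$ past a $\be_\ell$ produces a sign $(-1)^{|t_j||\be_\ell|}=-1$ in the twisted tensor product. The $\C W^-$ part will reassemble, up to an explicit power of $-1$, into the element $[i,k]=(-1)^{k-i-1}t_{k-1}\cdots t_{i+1}t_i t_{i+1}\cdots t_{k-1}$ defined in Section~5.1, while the Clifford part telescopes (via $\be_j \be_{j+1}\be_j$-type identities and the fact that conjugation by the simple reflections permutes the $c_\ell$) to a scalar multiple of $c_k-c_i$. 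Comparing the two sides and keeping track of the accumulated powers of $\sqrt{-1}$ from the $n$ simple factors and the sign bookkeeping yields the factor $-\sqrt{-2}$. The same computation, performed inside the subalgebra generated by $t_1,\dots,t_{n-1}$ and $t_nt_{n-1}t_n$ (isomorphic to $\C W_{D_n}^-$), handles the $\overline{s}_{ik}$ case \eqref{transpositionD}, using the explicit expressions $\overline{s}_{ij}=s_{jn}s_{i,n-1}s_n s_{i,n-1}s_{jn}$ and $\overline{t}_{ij}=\overline{[i,j]}$ from Section~5.1, together with $\be_n=c_n$ (type $B$) or $\be_n=\tfrac1{\sqrt2}(c_{n-1}+c_n)$ (type $D$). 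For \eqref{transpositionB}, one uses $\tau_i = s_{in}s_n s_{in}$ in $W_{B_n}$, applies $\Phi$ with $\be_n=c_n$, and simplifies against $\overline{[i]}=(-1)^{n-i}t_i\cdots t_{n-1}t_n t_{n-1}\cdots t_i$; here only one "long" factor $\be_n$ appears, which accounts for the single $\sqrt{-1}$ rather than $\sqrt{-2}$.

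An alternative, arguably cleaner, organization is to avoid spelling out reduced words: one checks first the base cases $[i,i+1]$ and $\overline{[n-1,n]}$ (for which $\Phi$ can be applied to a single generator or a short product), then proves the general formulas by induction on $k-i$ (and on $n-i$ for $\tau_i$), using the braid-type recursions $[i,j]=-t_{j-1}[i,j-1]t_{j-1}$ and the analogous recursion for $\overline{[i,j]}$, matched against the conjugation relations $s_{j-1}s_{i,j-1}s_{j-1}=s_{ij}$ in $W$ and the conjugation action of $s_{j-1}$ on $c_i$. At each inductive step one applies $\Phi(s_{j-1})=-\sqrt{-1}\,\be_{j-1}t_{j-1}$ twice; the two imaginary units combine to $(\sqrt{-1})^2=-1$ and the two $\be_{j-1}$'s sandwich the previous Clifford factor and rotate it, preserving the shape $c_k-c_i$ (resp. $c_k+c_i$) while the scalar stays fixed — this is what makes the normalizing constant independent of $j$.

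The main obstacle will be the sign bookkeeping in the twisted (super) tensor product $\Cl_n\otimes\C W^-$: each time an odd $t_j$ is commuted past an odd $\be_\ell$ one picks up a $-1$, and there are many such moves in a long word, so the count of these signs together with the powers of $-\sqrt{-1}$ and the intrinsic signs $(-1)^{j-i-1}$ (resp. $(-1)^{j-i}$) in the definitions of $[i,j]$ and $\overline{[i,j]}$ must be tallied with care. I expect the cleanest route is the inductive one, where at each step exactly two odd-past-odd transpositions and two factors of $-\sqrt{-1}$ occur, so the constant is manifestly preserved; the base cases and the $B_n$-specific terms ($\be_n=c_n$, the element $\overline{[i]}$) are then the only genuinely computational pieces, and these are short. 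Once Lemma~\ref{identify} is established in this way, it feeds directly into the construction of $\Phi$ at the level of $\dahc_W$ and $\sdaha_W$ in the rest of Section~5.
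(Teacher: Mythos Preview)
Your proposal is correct, and the inductive route you describe as the ``alternative, arguably cleaner'' one is precisely what the paper does: it inducts on the combined element $(c_k-c_i)s_{ik}$ (and its $D$- and $B$-type analogues), conjugating by a single simple reflection at each step and using $\Phi(s_j)=-\sqrt{-1}\,\be_j t_j$. One small sharpening: since by the induction hypothesis $\Phi((c_k-c_i)s_{ik})=-\sqrt{-2}\,[k,i]$ already lies in $1\otimes\C W^-$ with no Clifford component, the two $\be_{j-1}$'s in the inductive step do not ``rotate'' a Clifford factor but simply square away via $\be_{j-1}^2=1$; the passage from $c_k-c_i$ to $c_k-c_{i+1}$ happens on the source side in $\Cl_n\rtimes\C W$ before $\Phi$ is applied.
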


\begin{proof}
We may assume that $i>k$ without loss of generality.

We prove (\ref{transpositionA}) by induction on $i$. First,
(\ref{transpositionA}) for $i=k+1$ holds by
Theorem~\ref{th:isofinite}. Assuming that (\ref{transpositionA})
holds for $i$, i.e. $\Phi((c_k -c_i) s_{ik}) = -\sqrt{-2}\;[k,i]$,
we have by Theorem~\ref{th:isofinite} and the definition of
${[k,i]}$ that
\begin{align*}
\Phi((c_k -c_{i+1}) s_{{i+1}, k})
 & =\Phi(s_i(c_k -c_i) s_{ik}s_i) \\
 & =(-\sqrt{-1} \be_it_i ) (-\sqrt{-2}\;[k,i]) (-\sqrt{-1} \be_it_i) \\
 & = \sqrt{-2}\; t_i [k,i] t_i
  =-\sqrt{-2} [k,i+1].
\end{align*}

We now prove (\ref{transpositionB}) by a similar downward
induction on $i$, whose initial case $i=n$ is taken care of by
Theorem~\ref{th:isofinite}. Assume that (\ref{transpositionB})
holds for $i+1 \le n$, i.e. $\Phi (c_{i+1}\tau_{i+1}) = -
\sqrt{-1} \overline{[i+1]}.$ Then, by Theorem~\ref{th:isofinite}
and the definition of $\overline{[i]}$, we have
\begin{align*}
\Phi (c_{i}\tau_{i})
 &=\Phi (s_i c_{i+1}\tau_{i+1} s_i) \\
 & =(-\sqrt{-1} \be_it_i ) ( - \sqrt{-1} \, \overline{[i+1]}) (-\sqrt{-1} \be_it_i) \\
 & =\sqrt{-1}\, t_i\, \overline{[i+1]} \, t_i
  = - \sqrt{-1}\, \overline{[i]}.
\end{align*}

Now, we prove (\ref{transpositionD}) by downward induction first
on $k$ and then on $i$, for $W=W_{D_n}$. The initial case $i=n,
k=n-1$ holds by Theorem~\ref{th:isofinite}. Then, it follows by
the induction assumption that $\Phi( (c_{k+1} +c_n)
\overline{s}_{n,k+1}) =-\sqrt{-2}\;\overline{[k+1,n]}$,
Theorem~\ref{th:isofinite} and the definition of
$\overline{[k,n]}$ that
\begin{align*}
\Phi( (c_k +c_n) \overline{s}_{nk})
 & = \Phi( s_k(c_{k+1} +c_n) \overline{s}_{n,k+1}s_k) \\
&= (-\sqrt{-1} \be_kt_k) \cdot (-\sqrt{-2}\;\overline{[k+1,n]})
\cdot (-\sqrt{-1} \be_k t_k)\\
&= \sqrt{-2}\; t_k \overline{[k+1,n]} \, t_k
 = -\sqrt{-2}\;\overline{[k,n]}.
\end{align*}
This in turn becomes the initial step when $i=n$ for proving
(\ref{transpositionD}) by downward induction on $i$ (with fixed
$k<n$). By induction assumption (\ref{transpositionD}) holds for
$i>k+1$. Then
\begin{align*}
\Phi ((c_k +c_{i-1}) \overline{s}_{{i-1}, k})
  & =\Phi(s_{i-1}(c_k +c_i) \overline{s}_{ik}s_{i-1}) \\
 & =(-\sqrt{-1} \be_{i-1}t_{i-1}) (-\sqrt{-2}\;\overline{[k,i]}) (-\sqrt{-1} \be_{i-1}t_{i-1}) \\
 & = \sqrt{-2} t_{i-1} \overline{[k,i]} t_{i-1}
  = -\sqrt{-2}\;\overline{[k,i-1]}.
\end{align*}
This completes the proof of (\ref{transpositionD}) for type $D$.

The formula (\ref{transpositionD}) for $W=W_{B_n}$ is similarly
proved by double downward inductions on $k$ and then on $i$. The
only difference from the type $D$ case is that for type $B$ we
have to check the initial case when $k=n-1$ and $i=n$, which uses
(\ref{transpositionA}) and (\ref{transpositionB}):
\begin{align*}
\Phi( (c_{n-1} +c_n) \overline{s}_{n-1,n})
 & = \Phi( \tau_n (c_{k+1} -c_n) {s}_{n-1,n} \tau_n) \\
&= (-\sqrt{-1} c_n t_n) \cdot (-\sqrt{-2}\; t_{n-1})
\cdot (-\sqrt{-1} c_n t_n)\\
&= \sqrt{-2}\; t_n t_{n-1} t_n
 = -\sqrt{-2}\;\overline{[n-1,n]}.
\end{align*}
Thus the lemma is proved.
\end{proof}

Recall the isomorphism of superalgebras $\Phi: \Cl_n \rtimes \C W
\rightarrow \Cl_n \otimes \C W^-$ and its inverse $\Psi$ given in
Theorem~\ref{th:isofinite}.
\begin{theorem} \label{th:isomDBdaha}
Let $W$ be one of the Weyl groups $W_{A_{n-1}}$, $W_{D_n}$ or
$W_{B_n}$. Then,
\begin{enumerate}
\item there exists an isomorphism of superalgebras
 $$\Phi:\dahc_W\longrightarrow\Cl_n\otimes \sdaha_W
 $$
which extends $\Phi: \Cl_n \rtimes \C W \rightarrow \Cl_n \otimes
\C W^-$ and sends
$$
y_{i}\mapsto y_{i},\;
x_{i} \mapsto\sqrt{-2}c_{i}\xi_{i},\;
 s_i \mapsto -\sqrt{-1} \be_i t_i, \;
  c_i \mapsto c_i, \quad \forall i;
$$

\item the inverse
$$\Psi: \Cl_n\otimes \sdaha_W \longrightarrow
\dahc_W
$$
extends $\Psi: \Cl_n \otimes \C W^- \rightarrow \Cl_n \rtimes \C
W$ and sends
$$
y_{i}\mapsto y_{i},\;
\xi _{i}\mapsto\displaystyle\frac{1}{\sqrt{-2}}c_{i}x_{i},\;
t_i \mapsto \sqrt{-1} \be_i s_i,\;
c_i \mapsto c_i, \quad \forall
i.
$$
\end{enumerate}
\end{theorem}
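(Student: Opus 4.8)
The plan is to prove Theorem~\ref{th:isomDBdaha} by exhibiting $\Phi$ and $\Psi$ explicitly as the unique algebra homomorphisms determined on generators by the stated formulas, and then checking that they are mutually inverse. The first task is to verify that the assignments defining $\Phi$ respect all the defining relations of $\dahc_W$, so that $\Phi$ is a well-defined algebra map into $\Cl_n\otimes\sdaha_W$; by symmetry the same must be done for $\Psi$. Since $\Phi$ restricts to the known isomorphism $\Phi\colon\Cl_n\rtimes\C W\to\Cl_n\otimes\C W^-$ of Theorem~\ref{th:isofinite}, the relations (\ref{clifford}) and (\ref{conj-x})--(\ref{conj-c}) among the $c_i$'s and $W$ are automatic, as is the relation $s_i\mapsto-\sqrt{-1}\be_it_i$. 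The relations involving $y_i$ only -- commutativity of the $y_i$'s, and $wy_iw^{-1}=w(y)$, and $y_ic_j=c_jy_i$ -- transport directly since $y_i\mapsto y_i$ and the $y_i$ are even and central over $\Cl_n$ in $\Cl_n\otimes\sdaha_W$. It remains to check: (i) the commutation relations (\ref{comm}), (\ref{commCl}) between the $x_i$'s and $c_i$'s, i.e. that $\sqrt{-2}c_i\xi_i$ anticommutes with $c_i$, commutes with $c_j$ for $j\ne i$, and that the images $\sqrt{-2}c_i\xi_i$ of the $x_i$ commute with each other; and (ii) the crucial bracket relations (\ref{Ayjxi})--(\ref{Ayixi}) for type $A$, (\ref{Dyjxi})--(\ref{Dyixi}) for type $D$, and (\ref{Byjxi})--(\ref{Byixi}) for type $B$.

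For (i): in $\Cl_n\otimes\sdaha_W$ one has $c_i(c_i\xi_i)=\xi_i$ and $(c_i\xi_i)c_i=c_i\xi_ic_i=-c_ic_i\xi_i=-\xi_i$ using that $\xi_i$ is odd and $c_i,\xi_i$ super-commute past each other with a sign from the tensor product rule; similarly $c_j(c_i\xi_i)=-c_ic_j\xi_i$ while $(c_i\xi_i)c_j=c_i\xi_ic_j=-c_ic_j\xi_i$, wait -- one must be careful: $\xi_i$ and $c_j$ live in different tensor factors, so $\xi_ic_j=(1\otimes\xi_i)(c_j\otimes1)=(-1)^{|\xi_i||c_j|}c_j\otimes\xi_i=-c_j\xi_i$, giving $(c_i\xi_i)c_j=-c_ic_j\xi_i=c_jc_i\xi_i=c_j(c_i\xi_i)$ after moving $c_j$ past $c_i$, so they commute as required. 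The mutual commutativity $x_ix_j=x_jx_i$ becomes $(c_i\xi_i)(c_j\xi_j)=(c_j\xi_j)(c_i\xi_i)$ for $i\ne j$: expand both sides, moving $\xi_i$ past $c_j$ (sign $-1$) and $c_i$ past $c_j$ (sign $-1$) and $c_i$ past $\xi_j$ is in the same factor --- one collects an even number of signs and uses $\xi_i\xi_j=-\xi_j\xi_i$; this is a short bookkeeping exercise. For (ii), which is the heart of the matter, I would compute $[\,y_j,\Phi(x_i)\,]=\sqrt{-2}[\,y_j,c_i\xi_i\,]=\sqrt{-2}c_i[y_j,\xi_i]$ (since $y_j$ is even and commutes with $c_i$), and then substitute the sDaHa bracket relations. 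For type $A$ with $i\ne j$ this gives $\sqrt{-2}c_i(-u[i,j])=-u\sqrt{-2}\,c_i[i,j]$, and one must recognize this as the image under $\Phi$ of $u(1+c_jc_i)s_{ji}$. This identification is precisely where Lemma~\ref{identify} enters, together with a computation expressing $(1+c_jc_i)s_{ji}$ in terms of $(c_k-c_i)s_{ik}$-type elements: explicitly, $(1+c_jc_i)s_{ji}=\tfrac12 c_j(c_j-c_i)(c_j+c_i)\cdot\text{(something)}$ --- more directly, one checks $(1+c_jc_i)s_{ij}=-\tfrac{1}{2}(c_i-c_j)^2 s_{ij}=\ldots$; the cleanest route is to observe $\Phi\big((1+c_jc_i)s_{ij}\big)$ can be evaluated using $\Phi(s_{ij})$ and multiplicativity, but since Lemma~\ref{identify} already hands us $\Phi\big((c_k-c_i)s_{ik}\big)=-\sqrt{-2}[k,i]$, the right move is to write $c_i(1+c_jc_i)s_{ij}$ or the relevant combination in terms of $(c_k-c_i)s_{ik}$ and apply the lemma. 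The type $D$ and type $B$ cases are entirely parallel, now also invoking (\ref{transpositionD}) for the $\overline{s}_{ik}$ terms and (\ref{transpositionB}) for the $\tau_i$ term in (\ref{Byixi}); the $\sqrt{2}$ normalization in (\ref{Byixi}) is exactly what makes the $v\overline{[i]}$ term of $\sdaha_{B_n}$ match, which the Remark after Definition of $\dahc_{B_n}$ already flags.

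The diagonal relations $[y_i,x_i]$ require the same manipulation summed over $k\ne i$, and present no new difficulty beyond notation. Once $\Phi$ and $\Psi$ are both shown to be well-defined algebra homomorphisms, it remains to check $\Psi\circ\Phi=\mathrm{id}$ and $\Phi\circ\Psi=\mathrm{id}$; it suffices to verify this on generators. On $c_i$ and on the group/spin-group generators this is Theorem~\ref{th:isofinite}. On $y_i$ it is trivial. On $x_i$: $\Psi\Phi(x_i)=\Psi(\sqrt{-2}c_i\xi_i)=\sqrt{-2}c_i\cdot\tfrac{1}{\sqrt{-2}}c_ix_i=c_i^2x_i=x_i$, and on $\xi_i$: $\Phi\Psi(\xi_i)=\Phi(\tfrac1{\sqrt{-2}}c_ix_i)=\tfrac1{\sqrt{-2}}c_i\cdot\sqrt{-2}c_i\xi_i=\xi_i$; so the two maps are mutually inverse bijections, hence isomorphisms. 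Finally one notes both maps are even (degree-preserving): $x_i$ is even and $c_i\xi_i$ is a product of two odd elements, hence even, etc., so $\Phi$ is an isomorphism of superalgebras. The main obstacle is the computation in (ii): correctly tracking the super tensor-product signs when moving $c_i$'s past $\xi_i$'s and past the odd elements $[i,j],\overline{[i,j]},\overline{[i]}$ of $\C W^-$, and matching the resulting expressions against Lemma~\ref{identify} --- in particular getting all the signs and the factors of $\sqrt{-1},\sqrt{-2},\sqrt{2}$ to line up; once the type $A$ relation is done, types $D$ and $B$ follow the same template with the extra barred and $\tau$ terms handled by the corresponding parts of Lemma~\ref{identify}.
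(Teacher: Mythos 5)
Your proposal follows the same strategy as the paper: verify that $\Phi$ and $\Psi$ preserve the defining relations (the nontrivial cases being the bracket relations, handled via Lemma~\ref{identify}), then check that they are inverse to each other on generators. The bookkeeping you carry out — $\Phi([y_j,x_i])=\sqrt{-2}\,c_i[y_j,\xi_i]$, matching against $\Phi\big((1\pm c_jc_i)s_{ij}\big)$, $\Phi\big((1\pm c_kc_i)\overline s_{ki}\big)$, $\Phi(c_i\tau_i)$ from Lemma~\ref{identify}, and the generator-level verification of $\Psi\circ\Phi=\Phi\circ\Psi=\mathrm{id}$ — is precisely what the paper does.
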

In the terminology of \cite{W2}, $\dahc_W$ and $\sdaha_W$ are
Morita super-equivalent by Theorem~\ref{th:isomDBdaha}.

\begin{proof}
Recall that $\Phi$ extends the isomorphism $\Cl_n\rtimes\C W
\stackrel{\simeq}{\longrightarrow} \Cl_n \otimes\C W^-$. Among all
the relations (\ref{comm}-\ref{Byixi}) for $\dahc_W$, it is easy
to check that (\ref{comm}-\ref{conj-c}) are preserved by $\Phi$.
So it remains to check that $\Phi$ preserves the relations
(\ref{Ayjxi}--\ref{Ayixi}), (\ref{Dyjxi}--\ref{Dyixi}), and
(\ref{Byjxi}--\ref{Byixi}) for $W=W_{A_{n-1}}, W_{D_n}$, and
$W_{B_n}$ respectively.

We shall verify in detail that $\Phi$ preserves
(\ref{Byjxi}--\ref{Byixi}) with $W=W_{B_n}$. Indeed, by
Lemma~\ref{identify}, we have for $i\neq j$ that
\begin{align*}
\Phi (\text{l.h.s. of }(\ref{Byjxi})) &= \sqrt{-2} [y_j, c_i\xi_i] \\
& =\sqrt{-2} c_i (-u[i,j]+u\overline{[i,j]}) \\
&= \Phi \left(u\left( (1+c_{j}c_{i})s_{ji}-(1-c_{j}
c_{i})\overline{s}_{ij}\right)\right ) \\
& = \Phi (\text{r.h.s. of }(\ref{Byjxi})).
\end{align*}
Also, by Lemma~\ref{identify}, we have
\begin{align*}
\Phi (\text{l.h.s. of }(\ref{Byixi})) &= \sqrt{-2} [y_i, c_i\xi_i] \\
& = \sqrt{-2} uc_i\sum_{k\neq i}
 \left( [i,k]+\overline {[i,k]}\right) +\sqrt{-2} v c_i \overline{[i]} \\
&= \Phi \left( -u
 \sum_{k\neq i}((1+c_{k}c_{i})s_{ki}+(1-c_{k}
c_{i})\overline{s}_{ki})- \sqrt{2} v\tau_{i}\right ) \\
& = \Phi (\text{r.h.s. of }(\ref{Byixi})).
\end{align*}

By dropping the terms involving $v$ in the above equations, we
verify that the relations (\ref{Dyjxi}--\ref{Dyixi}) with
$W=W_{D_n}$ are preserved by $\Phi$. By further dropping the terms
involving $\overline{[ij]}, \overline{s}_{ij}$ etc., we can also
verify (\ref{Ayjxi}--\ref{Ayixi}) with $W =W_{A_{n-1}}$.

So, the homomorphism $\Phi$ is well defined. Similarly, one shows
that $\Psi$ is a well-defined algebra homomorphism. For example, the
relation $t_{n}\xi_{n} = -\xi_{n-1}t_{n}$ in $\sdaha_W$ for
$W=W_{D_n}$ is preserved by $\Psi$, since
\begin{align*}
\Psi(t_{n}\xi_{n})
 & =\frac{\sqrt{-1}}{\sqrt{2}} (c_{n-1} +c_n)s_n \frac{1}{\sqrt{-2}}c_n
 x_n \\
 & = \frac{1}{2}(c_{n-1} +c_n) c_{n-1} x_{n-1} s_n \\
 & = \frac{1}{2}c_{n-1} x_{n-1} (-c_{n-1} -c_n) s_n
 = -\Psi (\xi_{n-1}t_{n}).
\end{align*}

On the other hand the relation $t_{n}\xi_{n} =- \xi_{n}t_{n}$ in
$\sdaha_W$ for $W=W_{B_n}$ is preserved by $\Psi$, since
\begin{align*}
\Psi(t_{n}\xi_{n})
 & = \sqrt{-1} c_n s_n \frac{1}{\sqrt{-2}}c_n x_n
 = \frac{1}{\sqrt{2}} x_{n} s_n
 = -\frac{1}{\sqrt{2}} c_n x_{n} c_n s_n
 = -\Psi (\xi_{n}t_{n}).
\end{align*}

Since $\Phi$ and $\Psi$ are inverses on generators, they are
(inverse) algebra isomorphisms.
\end{proof}

\subsection{The PBW property for $\sdaha_W$}

We have the following PBW type property for the algebra $\sdaha_W$.
\begin{theorem} \label{PBW:sdaha}
Let $W$ be one of the Weyl groups $W_{A_{n-1}}$, $W_{D_n}$ or
$W_{B_n}$. The multiplication of the subalgebras induces an
isomorphism of vector spaces
\[
\Cl[\xi_1,\ldots,\xi_n] \otimes\mathbb{C }W^- \otimes\C
[y_1,\ldots,y_n] \longrightarrow \sdaha_W.
\]
Equivalently, the set $\{\xi^{\al} \sigma y^{\gamma}\}$ forms a
basis for $\sdaha_W$, where $\sigma$ runs over a basis for $\C W^-$,
and $\al, \gamma \in \Z_+^n$.
\end{theorem}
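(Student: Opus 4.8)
The plan is to deduce the PBW property for $\sdaha_W$ directly from the already-established PBW property for $\dahc_W$ (Theorem~\ref{PBW:DBdaha}) via the Morita super-equivalence isomorphism $\Phi:\dahc_W \xrightarrow{\;\simeq\;} \Cl_n \otimes \sdaha_W$ of Theorem~\ref{th:isomDBdaha}. Since $\Phi$ is an algebra isomorphism sending $x_i \mapsto \sqrt{-2}\,c_i\xi_i$, $y_i\mapsto y_i$, and restricting to the finite isomorphism $\Cl_n \rtimes \C W \to \Cl_n \otimes \C W^-$, it carries the triangular decomposition of $\dahc_W$ to a triangular decomposition of $\Cl_n \otimes \sdaha_W$; the task is to extract from this the triangular decomposition of the tensor factor $\sdaha_W$.

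First I would make precise what $\Phi$ does to the three subalgebras. We have $\Phi(\C[y_1,\dots,y_n]) = 1\otimes \C[y_1,\dots,y_n]$ on the nose. For the $x$-side, note $\Phi(x_i^2) = \Phi(x_i)^2 = (\sqrt{-2}c_i\xi_i)^2 = -2\,c_i\xi_i c_i\xi_i = 2\,\xi_i^2$ (using $c_i\xi_i = -\xi_i c_i$ inside $\Cl_n\otimes\sdaha_W$ and $c_i^2=1$), so more generally $\Phi$ maps the ordered monomial basis $x^\alpha c^\epsilon$ of $\C[\h^*]\otimes\Cl_n$ into $\Cl_n \otimes \Cl[\xi_1,\dots,\xi_n]$: indeed each $x_i$ becomes $\sqrt{-2}\,c_i\xi_i$, and reordering all the $c$'s to the left (they super-commute with the $\xi$'s up to sign) shows that $\Phi\big(\C[\h^*]\otimes\Cl_n\big) = \Cl_n \otimes \Cl[\xi_1,\dots,\xi_n]$ as subspaces, with the obvious basis going to a basis. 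Combining with the finite isomorphism $\Phi(\Cl_n\rtimes\C W)=\Cl_n\otimes\C W^-$ and the identity on the $y$'s, the PBW basis $\{x^\alpha c^\epsilon w\, y^\gamma\}$ of $\dahc_W$ is sent under $\Phi$ to a basis of $\Cl_n\otimes\sdaha_W$ consisting of elements lying in $\Cl_n\otimes\big(\Cl[\xi]\cdot\C W^-\cdot\C[y]\big)$. Since $\Phi$ is a linear isomorphism, this proves that $\Cl_n \otimes \big(\Cl[\xi_1,\dots,\xi_n]\cdot\C W^-\cdot\C[y_1,\dots,y_n]\big) = \Cl_n\otimes\sdaha_W$ and that the spanning set $\{\xi^\alpha\sigma y^\gamma\}$, tensored with a basis of $\Cl_n$, is linearly independent.

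To finish, I would observe that $\Cl_n$ is a nonzero algebra (of dimension $2^n$), so tensoring with it reflects both spanning and linear independence: from $\Cl_n\otimes A = \Cl_n\otimes\sdaha_W$ with $A$ the span of $\{\xi^\alpha\sigma y^\gamma\}$ inside $\sdaha_W$ we get $A=\sdaha_W$ (apply any linear functional on $\Cl_n$ that is nonzero on $1$), and from linear independence of $\{c^\epsilon\otimes\xi^\alpha\sigma y^\gamma\}$ in $\Cl_n\otimes\sdaha_W$ — equivalently, of their $\Phi$-preimages, which are part of the PBW basis of $\dahc_W$ — we get linear independence of $\{\xi^\alpha\sigma y^\gamma\}$ in $\sdaha_W$. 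Hence multiplication $\Cl[\xi_1,\dots,\xi_n]\otimes\C W^-\otimes\C[y_1,\dots,y_n]\to\sdaha_W$ is a bijection, as claimed. I do not expect a serious obstacle: the only point requiring care is the bookkeeping of signs when rewriting $\Phi(x^\alpha c^\epsilon)$ in the ordered form $c^{\epsilon'}\xi^{\alpha'}$ (the $c_i$'s anticommute with the $\xi_j$'s and among themselves), but since $\Phi$ is known to be an isomorphism this is purely formal and costs no more than unwinding the definitions; alternatively one can avoid it entirely by noting that $\Phi$ maps the filtered/graded pieces $\C[\h^*]^{\le d}\otimes\Cl_n\otimes\C W\otimes\C[\h]^{\le e}$ isomorphically onto $\Cl_n\otimes\big(\Cl[\xi]^{\le d}\cdot\C W^-\cdot\C[y]^{\le e}\big)$ and comparing dimensions degree by degree.
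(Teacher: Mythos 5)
Your proposal is correct and follows essentially the same route as the paper: both establish linear independence of $\{\xi^{\al}\sigma y^{\gamma}\}$ by transferring the question to $\dahc_W$ via the isomorphism $\Psi=\Phi^{-1}$ of Theorem~\ref{th:isomDBdaha} and then invoking the PBW property of $\dahc_W$ (Theorem~\ref{PBW:DBdaha}); the paper obtains the spanning statement directly from the defining relations of $\sdaha_W$, while you derive it through $\Phi$ as well, and both routes are fine.

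One small imprecision worth flagging: the elements $\Psi(c^{\epsilon}\otimes\xi^{\al}\sigma y^{\gamma})$ are not literally ``part of the PBW basis of $\dahc_W$'' as you state. Each such image is, up to a nonzero scalar, $x^{\al}\bigl(c^{\epsilon'}\Psi(\sigma)\bigr)y^{\gamma}$, and $\Psi(\sigma)\in\Cl_n\rtimes\C W$ is in general a genuine linear combination of several monomials $c^{\delta}w$, so these images are linear combinations of PBW monomials rather than single PBW monomials. The conclusion you need still holds, but for a slightly different reason: Theorem~\ref{PBW:DBdaha} asserts that multiplication gives a vector space isomorphism $\C[\h^*]\otimes(\Cl_n\rtimes\C W)\otimes\C[\h]\to\dahc_W$, so $\{x^{\al}\,b\,y^{\gamma}\}$ is a basis of $\dahc_W$ for \emph{any} basis $\{b\}$ of $\Cl_n\rtimes\C W$; since $\{c^{\epsilon'}\Psi(\sigma)\}$ is such a basis (it is the image of the basis $\{c^{\epsilon'}\otimes\sigma\}$ under the finite isomorphism $\Psi$ of Theorem~\ref{th:isofinite}), the elements $\Psi(c^{\epsilon}\otimes\xi^{\al}\sigma y^{\gamma})$ are (up to scalars) members of a basis of $\dahc_W$, hence linearly independent.
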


\begin{proof}
It follows from the defining relations for $\sdaha_W$ that
$\sdaha_W$ is spanned by the elements $\xi^{\al} \sigma y^{\gamma}$
where $\sigma$ runs over a basis for $\C W^-$, and $\al, \gamma \in
\Z_+^n$. By the isomorphism $\Psi: \Cl_n\otimes \sdaha_W
\longrightarrow\dahc_W$ in Theorem~\ref{th:isomDBdaha}, we see that
the image $\Psi (\xi^{\al} \sigma y^{\gamma})$ are linearly
independent in $\dahc_W$ by the PBW property for $\dahc_W$ (see
Theorem~\ref{PBW:DBdaha}). So the elements $\xi^{\al} \sigma
y^{\gamma}$ are linearly independent in $\sdaha_W$.

Therefore, the set $\{\xi^{\al} \sigma y^{\gamma}\}$ forms a basis
for $\sdaha_W$.
\end{proof}
The tensor product in the above theorem gives a triangular
decomposition of the algebra $\sdaha_W$.

\section{The Dunkl Operators for sDaHa} \label{sec:sdahaDunkl}

Denote by $\h_{\xi}$ the subalgebra of $\sdaha_W$ generated by
$\xi_i$'s $(1\leq i \leq n)$ and $\C W^-$. For a $\C W^-$-module V,
it can be extended to a $\h_{\xi}$- modules by letting the actions
of $\xi_i$'s on V be trivial. We define

$$
    V_{\xi}:=\text{Ind}_{\h_{\xi}}^{\sdaha_W} V \cong C[y_1,\ldots,y_n]\otimes V.
$$
We will always identify $V_{\xi} = \C[y_1,\ldots,y_n]\otimes V$. On
$\C[y_1,\ldots,y_n]\otimes V$, the element $t_i \in \C W^-$ acts as
$s_i \otimes t_i$, $y_i$ acts by left multiplication, and $\xi_i$
acts as Dunkl operators, which we will describe in this section.

Under Lemma~\ref{identify} and the superalgebra isomorphism
$\Phi:\dahc_W\rightarrow\Cl_n\otimes \sdaha_W $ in
Theorem~\ref{th:isomDBdaha}, the results in this section are fairly
straightforward counterparts of those in
Section~\ref{sec:daHCaDunkl}, and we omit the proofs.

\subsection{The Dunkl Operator for $\sdaha_{A_{n-1}}$}


The following lemma is the counterpart of Lemma~\ref{Acomm:[f,x]}.
\begin{lemma} \label{Acomm:s[f,xi]}
Let $W=W_{A_{n-1}}$, and $f \in \C[y_1,\ldots,y_n]$. Then the
following identity holds in $\sdaha_W$:
\begin{eqnarray*}
\lbrack f,\xi_i] &=& - u
    \sum_{k\neq i} \frac{f - f^{s_{ki}}}{y_i - y_k}
    \lbrack k, i].
\end{eqnarray*}
\end{lemma}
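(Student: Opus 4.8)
The plan is to reduce the statement to the case of monomials $f = y_1^{l_1}\cdots y_n^{l_n}$ and then use a two-step induction: first establish the single-variable-power identity $[\, y_j^l, \xi_i\,]$ and $[\, y_i^l, \xi_i\,]$ (the counterpart of Lemma~\ref{Acomm:[y^l,x]}), and then pass to an arbitrary monomial by an induction on the number of variables appearing, using the Leibniz-type product rule for the commutator $[-,\xi_i]$. Since everything here can be transported through the superalgebra isomorphism $\Phi:\dahc_W\to\Cl_n\otimes\sdaha_W$ of Theorem~\ref{th:isomDBdaha} (under which $\xi_i \mapsto \frac{1}{\sqrt{-2}}c_i x_i$, $y_i\mapsto y_i$, and $[k,i]$ is related to $(c_k-c_i)s_{ik}$ by Lemma~\ref{identify}), the cleanest route is to derive the $\sdaha_W$ identity directly from the already-proved type $A$ DaHCa identity in Lemma~\ref{Acomm:[f,x]}.

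Concretely, I would start from Lemma~\ref{Acomm:[f,x]}, which gives $[f, x_i] = -u\sum_{k\neq i} \frac{f - f^{s_{ki}}}{y_i - y_k}(1+c_kc_i)s_{ki}$ in $\dahc_W$. Apply $\Psi$ (or work inside $\Cl_n\otimes\sdaha_W$ via $\Phi$): one has $\xi_i = \frac{1}{\sqrt{-2}}c_i x_i$ inside $\Cl_n\otimes\sdaha_W$, and $f\in\C[y_1,\dots,y_n]$ is fixed by $\Phi$ and central with respect to the $c_i$. Thus $[f,\xi_i] = \frac{1}{\sqrt{-2}}[f, c_i x_i] = \frac{1}{\sqrt{-2}} c_i [f, x_i]$, and substituting the DaHCa formula gives $[f,\xi_i] = \frac{-u}{\sqrt{-2}}\sum_{k\neq i}\frac{f-f^{s_{ki}}}{y_i-y_k} c_i(1+c_kc_i)s_{ki}$. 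Now $c_i(1+c_kc_i)s_{ki} = (c_i + c_ic_kc_i)s_{ki} = (c_i - c_k)s_{ki}$ using $c_i^2=1$ and $c_ic_k=-c_kc_i$; and by Lemma~\ref{identify}, $(c_k-c_i)s_{ik}$ maps to $-\sqrt{-2}\,[k,i]$, equivalently $(c_i-c_k)s_{ki}$ corresponds to $(c_k-c_i)s_{ik}$ (since $s_{ik}=s_{ki}$) which equals $-\sqrt{-2}[k,i]$ under $\Phi$. Wait — I must be careful with signs: $(c_i - c_k)s_{ki} = -(c_k-c_i)s_{ik}$, so it corresponds to $+\sqrt{-2}[k,i]$; tracking this through, the $\frac{1}{\sqrt{-2}}$ and the $\sqrt{-2}$ cancel and one lands on $[f,\xi_i] = -u\sum_{k\neq i}\frac{f-f^{s_{ki}}}{y_i-y_k}[k,i]$, as claimed. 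I would double-check this sign bookkeeping carefully in the write-up, as it is the one place errors creep in.

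An alternative, self-contained route (not relying on $\Phi$) is to mimic the proof of Lemma~\ref{Acomm:[f,x]}: first prove the power identities $[y_j^l,\xi_i] = -u\frac{y_j^l-y_i^l}{y_j-y_i}[i,j]$ for $i\neq j$ and $[y_i^l,\xi_i] = u\sum_{k\neq i}\frac{y_i^l-y_k^l}{y_i-y_k}[i,k]$ directly from the defining relations $[y_j,\xi_i]=-u[i,j]$, $[y_i,\xi_i]=u\sum_{k\neq i}[i,k]$ together with the commutation rules $t_iy_i = y_{i+1}t_i$ etc. (so that $[i,k]y_j = y_{s_{ik}(j)}[i,k]$, reflecting the $W$-action on the $y$'s), by induction on $l$ using $[y_j^l,\xi_i] = [y_j^{l-1},\xi_i]y_j + y_j^{l-1}[y_j,\xi_i]$; then extend to monomials by induction on the number of active variables via $[fg,\xi_i] = f[g,\xi_i] + [f,\xi_i]g$ (legitimate since $f,g\in\C[y]$ commute), exactly as in Lemma~\ref{Acomm:[y,f]}. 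The main obstacle in either approach is the same: getting all the signs right, both the $\sqrt{-1},\sqrt{-2}$ scalar factors from $\Phi$ and the $[j,i] = -[i,j]$ antisymmetry of the spin "reflections," together with the subtle sign rule $t_i\xi_j = -\xi_j t_i$ for $j\neq i,i+1$; these superalgebra signs are exactly what makes the statement nontrivial, and I would organize the computation so that each sign is justified by an explicitly cited relation. Given that the excerpt says "we omit the proofs," the intended proof is surely the short one via $\Phi$ and Lemma~\ref{identify}, which is what I would present.
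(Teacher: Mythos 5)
Your primary route — transporting Lemma~\ref{Acomm:[f,x]} through the isomorphism $\Psi$ of Theorem~\ref{th:isomDBdaha}, simplifying $c_i(1+c_kc_i)s_{ki} = (c_i-c_k)s_{ki}$, and translating via Lemma~\ref{identify} — is exactly the paper's intended (omitted) proof, and your sign bookkeeping, including the self-correction, is accurate. The self-contained induction you sketch as a backup is also sound, but the isomorphism argument is the one the paper points to, so you have reproduced the expected approach.
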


The following is the counterpart of Theorem~\ref{ADunkl:x}.
\begin{proposition} \label{ADunkl:xi}
Let $W=W_{A_{n-1}}$ and $V$ be a $\C W^-$-module. The action of
$\xi_i$ on $\C[y_1,\ldots,y_n] \otimes V$ is realized as a Dunkl
operator as follows. For any polynomial $f \in \C[y_1,\ldots,y_n]$
and $v \in V$, we have
\begin{eqnarray*}
    \xi_i \circ (f \otimes v) &=&
    u \sum_{k\neq i} \frac{f - f^{s_{ki}}}{y_i - y_k}
    \otimes \lbrack k, i] v.
\end{eqnarray*}
\end{proposition}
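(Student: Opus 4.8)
The plan is to deduce Proposition~\ref{ADunkl:xi} from the already-established commutation identity in Lemma~\ref{Acomm:s[f,xi]}, in exactly the way Theorem~\ref{ADunkl:x} is deduced from Lemma~\ref{Acomm:[f,x]}. Concretely, $V_\xi = \C[y_1,\ldots,y_n]\otimes V = \mathrm{Ind}_{\h_\xi}^{\sdaha_W} V$, so by construction $\xi_i$ acts on $1\otimes v$ trivially for $v\in V$, i.e.\ $\xi_i\circ(1\otimes v)=0$; more generally, for a polynomial $f\in\C[y_1,\ldots,y_n]$ viewed as sitting in the first tensor factor, the element $f\otimes v$ is just $f\cdot(1\otimes v)$ in the induced module. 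Then I would write
\begin{eqnarray*}
\xi_i\circ(f\otimes v) &=& \xi_i\, f\,(1\otimes v)
 \;=\; [\xi_i,f]\,(1\otimes v) + f\,\xi_i\,(1\otimes v)\\
 &=& [\xi_i,f]\,(1\otimes v) + 0
 \;=\; -[f,\xi_i]\,(1\otimes v).
\end{eqnarray*}

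Next I would substitute the formula for $[f,\xi_i]$ from Lemma~\ref{Acomm:s[f,xi]}, namely $[f,\xi_i] = -u\sum_{k\neq i}\frac{f-f^{s_{ki}}}{y_i-y_k}[k,i]$, so that $-[f,\xi_i] = u\sum_{k\neq i}\frac{f-f^{s_{ki}}}{y_i-y_k}[k,i]$. The only remaining point is to interpret how this element of $\sdaha_W$ acts on $1\otimes v$: the coefficient $\frac{f-f^{s_{ki}}}{y_i-y_k}$ lies in $\C[y_1,\ldots,y_n]$ (it is a genuine polynomial, since $y_i-y_k$ divides $f-f^{s_{ki}}$), and it acts by left multiplication in the first tensor factor; the element $[k,i]\in\C W^-$ acts on $1\otimes v$ as $1\otimes [k,i]v$ (the $\h_\xi$-action, as recorded right before this subsection, has $\C W^-$ acting in the second factor on $V_\xi$ — here on the cyclic generator the polynomial-factor twist is trivial). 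Hence
\[
\xi_i\circ(f\otimes v)
 = u\sum_{k\neq i}\frac{f-f^{s_{ki}}}{y_i-y_k}\otimes [k,i]v,
\]
which is the claimed formula.

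Alternatively — and this is probably the cleanest phrasing for the paper — one can transport the already-proven Theorem~\ref{ADunkl:x} through the isomorphism $\Phi:\dahc_W\to\Cl_n\otimes\sdaha_W$ of Theorem~\ref{th:isomDBdaha}, under which $x_i\mapsto\sqrt{-2}\,c_i\xi_i$ and, via Lemma~\ref{identify}, $(c_k+c_i)\,\cdots$ and $(c_k-c_i)s_{ik}\mapsto -\sqrt{-2}\,[k,i]$; matching the $c_i$-free parts on both sides converts the Dunkl formula for $x_i$ into the Dunkl formula for $\xi_i$. I would present the first (direct) argument as the main proof and mention the second as a consistency remark.

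I do not expect any genuine obstacle here: the content is entirely in Lemma~\ref{Acomm:s[f,xi]} (itself a formal consequence of the defining relations and the type-$A$ analog of the bracket computations already carried out), and the passage to the induced-module action is the same bookkeeping used in the DaHCa case. The one mild point to state carefully is that $\frac{f-f^{s_{ki}}}{y_i-y_k}$ is an honest polynomial so that ``acts by left multiplication'' makes sense; this is immediate since $s_{ki}$ is a transposition and $f-f^{s_{ki}}$ vanishes on the hyperplane $y_i=y_k$.
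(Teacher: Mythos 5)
Your proof is correct and matches the paper's intent: the paper omits the proof of this proposition, noting that the results of this section are straightforward counterparts of those in Section~4 (either by mirroring the DaHCa arguments or by transporting through $\Phi$), and your main argument is exactly the counterpart of the paper's proof of Theorem~\ref{ADunkl:x} — expanding $\xi_i f(1\otimes v)$ via the commutator, using that $\xi_i(1\otimes v)=0$ in the induced module, and substituting the formula from Lemma~\ref{Acomm:s[f,xi]}. Your alternative via the isomorphism $\Phi$ is precisely the route the paper explicitly gestures at, so both readings are consistent with the source.
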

\subsection{The Dunkl Operator for $\sdaha_{B_{n}}$}

The following lemma is the counterpart of
Lemma~\ref{Bcomm:[y^l,x]}.
\begin{lemma} \label{Bcomm:s[y^l,xi]}
Let $W=W_{B_n}$ and $l \in \Z_+$. Then we have
\begin{eqnarray*}
\lbrack y_{j}^l,\xi_{i}] &=& u \frac{y_j^l - y_i^l}{y_j -
y_i}\lbrack j, i] + u \frac{y_j^l - (-y_i)^l}{y_j + y_i}
\overline{\lbrack j, i]}.\\
\lbrack y_{i}^l,\xi_{i}] &=&
 -u \sum_{k\neq i} \frac{y_i^l - y_k^l}{y_i -
y_k} \lbrack k, i]\\
&&\quad + u \sum_{k\neq i} \frac{y_i^l - (-y_k)^l}{y_i + y_k}
\overline{\lbrack k, i]} + v \frac{y_i^l -
 (-y_i)^l}{2y_i} \overline{\lbrack i]}.
\end{eqnarray*}
\end{lemma}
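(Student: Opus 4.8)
The plan is to obtain Lemma~\ref{Bcomm:s[y^l,xi]} by transporting its already-established $\dahc_{B_n}$ analogue, Lemma~\ref{Bcomm:[y^l,x]}, across the superalgebra isomorphism $\Phi:\dahc_W\to\Cl_n\otimes\sdaha_W$ of Theorem~\ref{th:isomDBdaha} (equivalently one may apply its inverse $\Psi$). Recall that $\Phi$ fixes each $y_i$, fixes each $c_i$, and sends $x_i\mapsto\sqrt{-2}\,c_i\xi_i$, while $c_i$ commutes with every $y_j$ by \eqref{comm}. Hence, applying $\Phi$ to the two identities of Lemma~\ref{Bcomm:[y^l,x]} produces, on the left-hand sides, $\Phi([y_j^l,x_i])=\sqrt{-2}\,c_i\,[y_j^l,\xi_i]$ and $\Phi([y_i^l,x_i])=\sqrt{-2}\,c_i\,[y_i^l,\xi_i]$, with the scalar and the Clifford generator simply factoring out of the bracket.

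For the right-hand sides I would first rewrite each group-algebra term as a left multiple of $c_i$, using the Clifford identities $1+c_kc_i=-c_i(c_k-c_i)$ and $1-c_kc_i=c_i(c_k+c_i)$ (both immediate from $c_i^2=1$ and $c_ic_k=-c_kc_i$), and then invoke Lemma~\ref{identify}, which identifies $\Phi((c_k-c_i)s_{ki})=-\sqrt{-2}\,[k,i]$, $\Phi((c_k+c_i)\overline{s}_{ki})=-\sqrt{-2}\,\overline{[k,i]}$ and $\Phi(c_i\tau_i)=-\sqrt{-1}\,\overline{[i]}$ (whence $\Phi(\tau_i)=-\sqrt{-1}\,c_i\overline{[i]}$). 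Keeping track of the scalars through $\sqrt{-2}=\sqrt{2}\,\sqrt{-1}$, one checks that $\Phi$ carries the right-hand side of each formula in Lemma~\ref{Bcomm:[y^l,x]} to $\sqrt{-2}\,c_i$ times precisely the right-hand side of the corresponding formula in Lemma~\ref{Bcomm:s[y^l,xi]}, now viewed inside $1\otimes\sdaha_W$. Since $c_i$ is invertible in $\Cl_n$, left multiplication by $\sqrt{-2}\,c_i$ on $\Cl_n\otimes\sdaha_W$ is injective, so the common factor $\sqrt{-2}\,c_i$ may be cancelled on both sides, and the stated identities in $\sdaha_W$ follow.

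There is no genuine obstacle here: the argument is purely formal once Lemma~\ref{Bcomm:[y^l,x]}, Lemma~\ref{identify} and Theorem~\ref{th:isomDBdaha} are in hand, which is exactly why the proof is omitted in the text. The only point that repays a moment's care is the sign-and-square-root bookkeeping in the Clifford rewriting, in particular verifying that the final summand emerges as $+\,v\,\frac{y_i^l-(-y_i)^l}{2y_i}\,\overline{[i]}$ with the correct sign and coefficient; this is where the factor $\sqrt{2}$ deliberately inserted in \eqref{Byixi} does its job. (Alternatively one could prove Lemma~\ref{Bcomm:s[y^l,xi]} directly, imitating verbatim the Appendix proof of Lemma~\ref{Bcomm:[y^l,x]} with $x_i$ replaced by $\xi_i$ and the defining relations of $\dahc_W$ replaced by those of $\sdaha_W$, but the route through $\Phi$ is the shorter one.)
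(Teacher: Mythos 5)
Your argument is correct and follows exactly the route the paper indicates: the authors state at the start of Section~\ref{sec:sdahaDunkl} that the results there are obtained from Section~\ref{sec:daHCaDunkl} via Lemma~\ref{identify} and the isomorphism $\Phi$ of Theorem~\ref{th:isomDBdaha}, with proofs omitted. Your Clifford rewriting $1\pm c_kc_i=\mp c_i(c_k\mp c_i)$, the use of Lemma~\ref{identify}, and the cancellation of the invertible factor $\sqrt{-2}\,c_i$ carry this out faithfully.
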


The following lemma is the counterpart of Lemma~\ref{Bcomm:[f,x]}.
\begin{lemma} \label{Bcomm:s[f,xi]}
Let $W=W_{B_n}$. The following identity holds in $\sdaha_W$:
\begin{eqnarray*}
\lbrack f, \xi]
 &=& - u\sum_{k\neq i} \frac{f - f^{s_{ki}}}{y_i - y_k} \lbrack k,i]
  + u \sum_{k\neq i} \frac{f - f^{\overline{s}_{k i}}}{y_i + y_k}
\overline{\lbrack k,i]} + v \frac{f - f^{\tau_i}}{2y_i}
\overline{\lbrack i]}.
\end{eqnarray*}
\end{lemma}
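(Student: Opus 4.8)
The plan is to reduce Lemma~\ref{Bcomm:s[f,xi]} (the bracket $[f,\xi_i]$ for an arbitrary polynomial $f\in\C[y_1,\ldots,y_n]$) to the special case of powers $f=y_j^l$, which is exactly Lemma~\ref{Bcomm:s[y^l,xi]}, and then to bootstrap from single powers to monomials by a Leibniz-type induction, just as Lemma~\ref{Bcomm:[f,x]} was derived from Lemma~\ref{Bcomm:[y^l,x]} on the DaHCa side. Concretely, I would first note that since both sides of the asserted identity are additive in $f$, it suffices to treat a monomial $f=y_1^{l_1}\cdots y_n^{l_n}$; and since $[-,\xi_i]$ is a derivation on the $y$-subalgebra (because $\C[y_1,\ldots,y_n]$ is commutative), I can peel off one variable at a time via
\begin{eqnarray*}
[y_1^{l_1}\cdots y_a^{l_a}y_{a+1}^{l_{a+1}},\xi_i]
 &=& [y_1^{l_1}\cdots y_a^{l_a},\xi_i]\,y_{a+1}^{l_{a+1}}
   + y_1^{l_1}\cdots y_a^{l_a}\,[y_{a+1}^{l_{a+1}},\xi_i],
\end{eqnarray*}
and induct on the number of variables, with base case supplied by Lemma~\ref{Bcomm:s[y^l,xi]}. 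The difference-quotient bookkeeping works because $\frac{f-f^{s_{ki}}}{y_i-y_k}$, $\frac{f-f^{\overline s_{ki}}}{y_i+y_k}$, and $\frac{f-f^{\tau_i}}{2y_i}$ all satisfy the same Leibniz rule with respect to the divided actions of $s_{ki}$, $\overline s_{ki}$, $\tau_i$, so the inductive step matches termwise.

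Alternatively — and this is cleaner given what is already available — I would transport the statement through the superalgebra isomorphism $\Psi:\Cl_n\otimes\sdaha_W\to\dahc_W$ of Theorem~\ref{th:isomDBdaha}. Under $\Psi$ we have $\xi_i\mapsto \frac{1}{\sqrt{-2}}c_ix_i$ and $y_i\mapsto y_i$, so $[f,\xi_i]$ maps to $\frac{1}{\sqrt{-2}}c_i\,[f,x_i]$ for any $f\in\C[y_1,\ldots,y_n]$ (the $c_i$ commutes past $f$ since $f$ involves only $y$'s). Now apply the already-established Lemma~\ref{Bcomm:[f,x]} for $[f,x_i]$ in $\dahc_W$, and then rewrite the resulting combinations $c_i(1+c_kc_i)s_{ki}=(c_i+c_k)s_{ki}$, $c_i(1-c_kc_i)\overline s_{ki}=(c_i-c_k)\overline s_{ki}=-(c_k-c_i)\overline s_{ki}$... wait, I should be careful with signs here; the clean route is instead to use Lemma~\ref{identify}, which gives $\Phi((c_k-c_i)s_{ik})=-\sqrt{-2}[k,i]$, $\Phi((c_k+c_i)\overline s_{ik})=-\sqrt{-2}\,\overline{[k,i]}$, $\Phi(c_i\tau_i)=-\sqrt{-1}\,\overline{[i]}$, so that applying $\Phi$ to $\frac{1}{\sqrt{-2}}c_i[f,x_i]$ and matching the three types of terms reproduces exactly the three terms on the right-hand side of Lemma~\ref{Bcomm:s[f,xi]}, with the constants $-u$, $+u$, $+v$ emerging from the scalar factors $\sqrt{-2}\cdot\frac{1}{\sqrt{-2}}$ and $\sqrt{-1}\cdot\frac{1}{\sqrt{-2}}\cdot\sqrt 2$ in the respective summands.

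I would actually present only the short version: \emph{this lemma is the $\sdaha$-counterpart of Lemma~\ref{Bcomm:[f,x]}, obtained by applying the isomorphism $\Phi$ of Theorem~\ref{th:isomDBdaha} together with the substitution rules of Lemma~\ref{identify}; alternatively it follows from Lemma~\ref{Bcomm:s[y^l,xi]} by the monomial induction above.} In keeping with the style of the surrounding lemmas (e.g. the proofs of Lemmas~\ref{Acomm:[y,f]} and \ref{Acomm:[f,x]}, and the remark that the section's results are ``fairly straightforward counterparts''), no detailed computation need be displayed.

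The main obstacle — such as it is — is purely clerical: tracking the scalar normalizations ($\sqrt{-2}$, $\sqrt{-1}$, the $\sqrt 2$ in the $\tau_i$/$\overline{[i]}$ term) and the signs attached to $c_k\pm c_i$ versus $[k,i]$ and $\overline{[k,i]}$ through the isomorphism, so that the coefficients $-u$, $+u$, $+v$ and the divided-difference denominators $y_i-y_k$, $y_i+y_k$, $2y_i$ come out matched to Lemma~\ref{Bcomm:s[y^l,xi]}. There is no conceptual difficulty: once Lemma~\ref{identify}, Theorem~\ref{th:isomDBdaha}, and Lemma~\ref{Bcomm:[f,x]} are in hand, the identity is forced. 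The one point requiring a moment's care is that $\Psi(\xi_i)=\frac{1}{\sqrt{-2}}c_ix_i$ and $c_i$ anticommutes with $x_i$ but commutes with all $y_j$, so the reduction $\Psi([f,\xi_i])=\frac{1}{\sqrt{-2}}c_i[f,x_i]$ is legitimate precisely because $f$ lies in the $y$-polynomial subalgebra.
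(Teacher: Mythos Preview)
Your proposal is correct and matches the paper's approach exactly: the paper states that Lemma~\ref{Bcomm:s[f,xi]} is the counterpart of Lemma~\ref{Bcomm:[f,x]} and omits the proof, remarking that all results in that section follow from the isomorphism $\Phi$ of Theorem~\ref{th:isomDBdaha} together with Lemma~\ref{identify}. Your two routes --- transport through $\Phi$ using Lemma~\ref{identify}, or monomial induction from Lemma~\ref{Bcomm:s[y^l,xi]} --- are precisely the two justifications the paper implicitly invokes.
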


The following is the counterpart of Theorem~\ref{BDunkl:x}.
\begin{proposition}
Let $W=W_{B_n}$, $V$ be a $\C W^-$-module. The action of $\xi_i$
on $\C[y_1,\ldots,y_n] \otimes V$ is realized as a Dunkl operator
as follows. For any polynomial $f \in \C[y_1,\ldots,y_n]$ and $v
\in V$, we have
\begin{eqnarray*}
    \xi_i \circ (f \otimes v) &=&
    u \sum_{k\neq i} \frac{f - f^{s_{ki}}}{y_i - y_k}
    \otimes \lbrack k,i] v \\
    && - u \sum_{k\neq i} \frac{f - f^{\overline{s}_{k i}}}{y_i +
    y_k} \otimes \overline{\lbrack k,i]}v
    - v \frac{f - f^{\tau_i}}{2y_i} \otimes\overline{\lbrack i]} v.
\end{eqnarray*}
\end{proposition}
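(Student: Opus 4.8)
The statement to be proved is the last Proposition: the realization of $\xi_i$ as a Dunkl operator on $\C[y_1,\ldots,y_n]\otimes V$ for $W=W_{B_n}$. I would treat this exactly as the DaHCa Dunkl theorems (e.g. Theorem~\ref{BDunkl:x}) were treated. The key reduction is that $V$ is induced from $\h_\xi$ by declaring each $\xi_i$ to act as zero on $V$, so for $v\in V$ and $f\in\C[y_1,\ldots,y_n]$ one has $\xi_i\cdot(f\otimes v)=(\xi_i f)\otimes v$, and writing $\xi_i f = f\xi_i + [\xi_i,f] = f\xi_i - [f,\xi_i]$ and using $\xi_i v=0$ yields
\[
\xi_i\circ(f\otimes v) = -[f,\xi_i]\otimes v \cdot(\text{more precisely, apply the straightened form of }[f,\xi_i]\text{ to }v).
\]
So the entire content is the commutator identity of Lemma~\ref{Bcomm:s[f,xi]}, applied to $v$, together with the bookkeeping of the sign. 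Thus the first step is simply to invoke Lemma~\ref{Bcomm:s[f,xi]}.

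\textbf{Key steps.} First I would establish the single-variable identities of Lemma~\ref{Bcomm:s[y^l,xi]} by induction on $l$: the base case $l=1$ is the defining relation $[y_j,\xi_i]=-u[i,j]+u\overline{[i,j]}$ (and its $i=j$ analog with the extra $v\overline{[i]}$ term), and the inductive step uses $[y_i^{l+1},\xi_i]=y_i[y_i^l,\xi_i]+[y_i,\xi_i]y_i^l$ together with the commutation rules between $y_i$ and the elements $[k,i],\overline{[k,i]},\overline{[i]}\in\C W^-$ — these rules are exactly what produce the telescoping rational-function coefficients $\frac{y_i^l-y_k^l}{y_i-y_k}$ and $\frac{y_i^l-(-y_k)^l}{y_i+y_k}$, the sign twist coming from the fact that $[k,i]$ intertwines $y_i\leftrightarrow y_k$ while $\overline{[k,i]}$ intertwines $y_i\leftrightarrow -y_k$. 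Second, I would pass from monomials to arbitrary $f$ by additivity and the Leibniz rule $[y_i,gh]=[y_i,g]h+g[y_i,h]$, as in the proof of Lemma~\ref{Acomm:[f,x]}, obtaining Lemma~\ref{Bcomm:s[f,xi]}. Third, and finally, I would substitute this into $\xi_i\circ(f\otimes v)=-[f,\xi_i]\otimes v$ (interpreting the $\C W^-$-coefficients as acting on $v$), read off the three sums, and check the signs match the claimed formula.

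\textbf{Shortcut via $\Phi$.} In fact, since Theorem~\ref{th:isomDBdaha} gives the superalgebra isomorphism $\Phi:\dahc_W\to\Cl_n\otimes\sdaha_W$ sending $x_i\mapsto\sqrt{-2}\,c_i\xi_i$, $y_i\mapsto y_i$, the cleanest route is to transport Theorem~\ref{BDunkl:x} directly: under $\Phi$, the $\dahc_{B_n}$-module $\C[y_1,\ldots,y_n]\otimes M$ with $M=\Cl_n\otimes V$ corresponds to the $\sdaha_{B_n}$-module $\C[y_1,\ldots,y_n]\otimes V$ tensored with $\Cl_n$, and the action of $x_i=\Phi^{-1}(\sqrt{-2}c_i\xi_i)$ given by Theorem~\ref{BDunkl:x} translates, after stripping the Clifford factor using Lemma~\ref{identify} (which converts $(1+c_kc_i)s_{ki}$, $(1-c_kc_i)\overline{s}_{ki}$, $\tau_i$ into $[k,i]$, $\overline{[k,i]}$, $\overline{[i]}$ up to scalars $\sqrt{-2},\sqrt{-1}$), into precisely the stated formula. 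This is why the paper says ``we omit the proofs''; I would present the direct inductive proof as the primary argument and remark that the $\Phi$-transport gives an alternative.

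\textbf{Main obstacle.} The only real difficulty is sign/scalar bookkeeping: tracking the $\sqrt{-2}$ versus $\sqrt{-1}$ normalizations from Lemma~\ref{identify}, the minus sign in $\xi_i\circ(f\otimes v)=-[f,\xi_i]\otimes v$, and the $\pm$ in $\frac{f-f^{\overline{s}_{ki}}}{y_i+y_k}$ (note the overall minus on the $\overline{[k,i]}$ and $\overline{[i]}$ terms in the target formula, which differs in sign pattern from the DaHCa analog because of how $c_kc_i$ absorbs a sign under $\Phi$). Everything else is a routine induction and Leibniz-rule expansion with no conceptual content.
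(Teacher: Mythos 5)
Your proof is correct, and you have in fact sketched both viable routes; the difference from the paper is one of emphasis. The paper's actual (omitted) argument is exactly what you call the ``shortcut'': Section~\ref{sec:sdahaDunkl} states that ``under Lemma~\ref{identify} and the superalgebra isomorphism $\Phi$ \dots\ the results in this section are fairly straightforward counterparts of those in Section~\ref{sec:daHCaDunkl}, and we omit the proofs,'' i.e.\ Theorem~\ref{BDunkl:x} for $\dahc_{B_n}$ is transported through $\Phi:\dahc_W\to\Cl_n\otimes\sdaha_W$ using Lemma~\ref{identify} to convert $(1+c_kc_i)s_{ki}$, $(1-c_kc_i)\overline{s}_{ki}$, $\tau_i$ into $[k,i]$, $\overline{[k,i]}$, $\overline{[i]}$ (after absorbing the $\sqrt{-2}$ from $x_i\mapsto\sqrt{-2}\,c_i\xi_i$). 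You instead take as primary the direct inductive argument — prove Lemma~\ref{Bcomm:s[y^l,xi]} from the defining relations, pass to arbitrary $f$ by the Leibniz rule, and apply $\xi_i\circ(f\otimes v)=[\xi_i,f]\otimes v$ (with coefficients in $\C W^-$ acting on $v$). This route is more self-contained (it does not rely on having first done the full DaHCa Dunkl computation) and makes the sign mechanism — $[k,i]$ intertwining $y_i\leftrightarrow y_k$, $\overline{[k,i]}$ intertwining $y_i\leftrightarrow -y_k$, $\overline{[i]}$ anticommuting with $y_i$ — transparent, at the cost of essentially reproving the $B_n$ telescoping once more; the paper's transport is shorter because all that bookkeeping was already done on the DaHCa side. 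One small slip: in your second step the Leibniz identity should be $[gh,\xi_i]=g[h,\xi_i]+[g,\xi_i]h$ (you wrote it in the form $[y_i,gh]$, carried over from the DaHCa case), but this does not affect the argument.
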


\subsection{The Dunkl Operator for $\sdaha_{D_{n}}$}

\begin{proposition}
Let $W=W_{D_n}$, $V$ be a $\C W^-$-module. The action of $\xi_i$
on $\C[y_1,\ldots,y_n] \otimes V$ is realized as a Dunkl operator
as follows. For any polynomial $f \in \C[y_1,\ldots,y_n]$ and $v
\in V$, we have
\begin{eqnarray*}
    \xi_i \circ (f \otimes v) &=&
    u \sum_{k\neq i} \frac{f - f^{s_{ki}}}{y_i - y_k}
    \otimes \lbrack k,i] v
    -u \sum_{k\neq i} \frac{f - f^{\overline{s}_{k i}}}{y_i +
    y_k} \otimes \overline{\lbrack k,i]}v .
\end{eqnarray*}
\end{proposition}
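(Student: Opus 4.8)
The plan is to reduce the statement to a commutator computation inside $\sdaha_{D_n}$, mirroring the way the $x_i$-Dunkl realization for $\dahc_{D_n}$ is deduced from Lemma~\ref{Dcomm:[f,x]}. Write $\tilde v$ for the image of $v\in V$ in $V_\xi=\text{Ind}_{\h_\xi}^{\sdaha_W}V$, so that a general element of $V_\xi$ is $f\,\tilde v$ with $f\in\C[y_1,\dots,y_n]$. Since $\xi_i$ annihilates $V$ inside $V_\xi$ and $f$ is even, $\xi_i\circ(f\,\tilde v)=(\xi_if)\,\tilde v=[\xi_i,f]\,\tilde v+f\,\xi_i\tilde v=[\xi_i,f]\,\tilde v=-[f,\xi_i]\,\tilde v$. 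Hence it suffices to prove the type $D_n$ analogue of Lemma~\ref{Bcomm:s[f,xi]}, namely that in $\sdaha_{D_n}$ one has $[f,\xi_i]=-u\sum_{k\neq i}\frac{f-f^{s_{ki}}}{y_i-y_k}[k,i]+u\sum_{k\neq i}\frac{f-f^{\overline{s}_{ki}}}{y_i+y_k}\overline{[k,i]}$; applying $-[f,\xi_i]$ to $\tilde v$, with the $\C[y]$-coefficients acting by left multiplication and $[k,i],\overline{[k,i]}\in\C W^-$ acting on $V$, yields precisely the asserted formula.

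To obtain this commutator identity I would first establish the type $D_n$ analogue of Lemma~\ref{Bcomm:s[y^l,xi]}, i.e. compute $[y_j^l,\xi_i]$ for all $j$ and $l\in\Z_+$. Expanding $[y_j^l,\xi_i]=\sum_{a=0}^{l-1}y_j^{a}\,[y_j,\xi_i]\,y_j^{\,l-1-a}$ and inserting the defining brackets $[y_j,\xi_i]=-u[i,j]+u\overline{[i,j]}$ for $i\neq j$ and $[y_i,\xi_i]=u\sum_{k\neq i}([i,k]+\overline{[i,k]})$, the sums collapse once one moves the $y$'s past the odd elements $[k,i]$ and $\overline{[k,i]}$. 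From the relations of $\sdaha_{D_n}$ one reads off that conjugation by $[k,i]$ swaps $y_i\leftrightarrow y_k$ with no sign, so the geometric-type series contributes a denominator $y_i-y_k$, whereas conjugation by $\overline{[k,i]}$ sends $y_i\mapsto-y_k$ and $y_k\mapsto-y_i$ (the sign being forced, e.g. from $t_ny_n=-y_{n-1}t_n$ together with $t_n^2=1$), so the alternating series produces a denominator $y_i+y_k$. One then passes from monomials $y_1^{l_1}\cdots y_n^{l_n}$ to arbitrary $f$ by the Leibniz rule $[fg,\xi_i]=f[g,\xi_i]+[f,\xi_i]g$ and induction on the number of variables, by the same monomial induction used to deduce Lemma~\ref{Acomm:[f,x]} from Lemma~\ref{Acomm:[y^l,x]}, and then extends to all $f$ by linearity. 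Since $W_{D_n}$ contains no sign change $\tau_i$, there is no $\overline{[i]}$-term and no parameter $v$: this is the only structural difference from the type $B_n$ computation, and the remainder is literally the type $B$ argument with every $v$-term erased, exactly as the $\dahc_{D_n}$ Dunkl formulas were obtained from their $\dahc_{B_n}$ counterparts.

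I expect the only genuine point of care to be the sign discipline in the spin setting: correctly recording that $\overline{[k,i]}$ conjugates $y_i,y_k$ with a minus sign (which is what converts the $y_i-y_k$ denominators into $y_i+y_k$), and checking that the even polynomial factors commute past $[k,i]$ and $\overline{[k,i]}$ only up to the relevant permutation of variables, with no stray signs from the $\Z_2$-grading (the $y$'s being even). A conceptually cleaner, essentially bookkeeping-free alternative is to transport the already-proven $x_i$-Dunkl realization for $\dahc_{D_n}$ through the superalgebra isomorphism of Theorem~\ref{th:isomDBdaha} (under which $\xi_i\mapsto\frac{1}{\sqrt{-2}}c_ix_i$), using the identifications $(c_k-c_i)s_{ik}\mapsto-\sqrt{-2}\,[k,i]$ and $(c_k+c_i)\overline{s}_{ik}\mapsto-\sqrt{-2}\,\overline{[k,i]}$ of Lemma~\ref{identify}; on that route the one thing to verify is the compatibility of the induced $K$-module $\C[x]\otimes M$ with the induced $\C W^-$-module $V_\xi=\C[y]\otimes V$ under $\Phi$, after which the signs are inherited for free. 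I would present the direct commutator computation as the main argument and remark that the second route recovers the same formula.
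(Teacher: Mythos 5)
Your main argument (direct commutator computation inside $\sdaha_{D_n}$) is correct, but it is a genuinely different route from the paper. The paper's own proof of this proposition is the route you relegate to your final paragraph: it explicitly states that the Dunkl formulas for $\sdaha_W$ are obtained by transporting the already-established $\dahc_W$ formulas through the superalgebra isomorphism $\Phi:\dahc_W\to\Cl_n\otimes\sdaha_W$ of Theorem~\ref{th:isomDBdaha} and the identifications of Lemma~\ref{identify}, and omits the details. What your direct computation buys is self-containedness: you re-derive the type $D$ analogues of Lemma~\ref{Bcomm:s[y^l,xi]} and Lemma~\ref{Bcomm:s[f,xi]} straight from the defining brackets $[y_j,\xi_i]=-u[i,j]+u\overline{[i,j]}$ and $[y_i,\xi_i]=u\sum_{k\neq i}([i,k]+\overline{[i,k]})$, using the telescoping expansion $[y_j^l,\xi_i]=\sum_a y_j^a[y_j,\xi_i]y_j^{l-1-a}$ and the conjugation action of $[k,i]$ (swapping $y_i\leftrightarrow y_k$) and $\overline{[k,i]}$ (sending $y_i\mapsto -y_k$, $y_k\mapsto -y_i$), without ever invoking $\dahc_W$. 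What the paper's route buys is economy: the heavier computations were already done once in the Clifford setting (Lemmas~\ref{Bcomm:[y,x^l]}, \ref{Bcomm:[y^l,x]}, \ref{Bcomm:[f,x]}), and $\Phi$ together with Lemma~\ref{identify} converts them into the spin statements essentially for free, avoiding a second round of sign bookkeeping. Both arguments are sound; your sign analysis ($[k,i]$ acts without sign on the even generators $y_i$, while $\overline{[k,i]}$ introduces the sign that converts $y_i-y_k$ denominators into $y_i+y_k$) is exactly right.
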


\subsection{The even center for $\sdaha_W$}

\begin{proposition}
Let $W$ be one of the Weyl groups $W_{A_{n-1}}$, $W_{D_n}$ or
$W_{B_n}$. The even center for $\sdaha_W$ contains $\C[y_1,\ldots,
y_n]^W$ and $\C [\xi_1^2,\ldots, \xi_n^2]^W$. In particular,
$\sdaha_W$ is module-finite over its even center.
\end{proposition}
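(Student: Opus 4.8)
The plan is to deduce the proposition from the corresponding statement for $\dahc_W$ (Proposition~\ref{CenDaHa}) by transport along the superalgebra isomorphism $\Psi : \Cl_n \otimes \sdaha_W \to \dahc_W$ of Theorem~\ref{th:isomDBdaha}, where throughout $\sdaha_W$ is regarded as a subalgebra of $\Cl_n \otimes \sdaha_W$ via $f \mapsto 1 \otimes f$. The first step is to record what $\Psi$ does to the two commutative subalgebras in question. Since $\Psi(y_i) = y_i$ and the $y_i$ commute on both sides, $\Psi$ restricts to the identity on $\C[y_1,\ldots,y_n]$. Since $\Psi(\xi_i) = \tfrac{1}{\sqrt{-2}}c_i x_i$ and $c_i x_i c_i x_i = -x_i^2$ by the relations (\ref{clifford}) and (\ref{commCl}), one gets $\Psi(\xi_i^2) = \tfrac12 x_i^2$; as the $\xi_i^2$ commute with one another and with every $\xi_j$, the subalgebra $\C[\xi_1^2,\ldots,\xi_n^2]$ is a genuine polynomial ring inside $\Cl[\xi_1,\ldots,\xi_n]$, and $\Psi$ restricts to an algebra isomorphism $\C[\xi_1^2,\ldots,\xi_n^2] \stackrel{\simeq}{\longrightarrow} \C[x_1^2,\ldots,x_n^2]$. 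Finally, since $\Psi(t_i) = \sqrt{-1}\be_i s_i$ with $\be_i^2 = 1$ and $\be_i$ commuting with $\C[x_1^2,\ldots,x_n^2]$, conjugation by $\Psi(t_i)$ acts on $\C[x_1^2,\ldots,x_n^2]$ exactly as conjugation by $s_i$ (a short check using $s_i\be_i s_i = -\be_i$); hence $\Psi$ is $W$-equivariant on these subalgebras and carries $\C[\xi_1^2,\ldots,\xi_n^2]^W$ onto $\C[x_1^2,\ldots,x_n^2]^W$, while it obviously carries $\C[y_1,\ldots,y_n]^W$ onto itself.

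Given this, let $f$ be an element of $\C[y_1,\ldots,y_n]^W$ or of $\C[\xi_1^2,\ldots,\xi_n^2]^W$. By the first step and Proposition~\ref{CenDaHa}, the image $\Psi(1 \otimes f)$ lies in the even center of $\dahc_W$; since $\Psi$ is an isomorphism of superalgebras, $1 \otimes f$ lies in the even center of $\Cl_n \otimes \sdaha_W$. Because $1$ is even in $\Cl_n$, the multiplication rule of the super tensor product gives $(1 \otimes f)(1 \otimes b) = 1 \otimes fb$ and $(1 \otimes b)(1 \otimes f) = 1 \otimes bf$ for every $b \in \sdaha_W$; centrality of $1 \otimes f$ therefore forces $fb = bf$ for all $b$, and since $f$ is even we conclude $f \in \mathcal Z(\sdaha_W)$. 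This proves the two claimed containments.

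For module-finiteness I would proceed as in the proof of Proposition~\ref{CenDaHa}. By the PBW theorem (Theorem~\ref{PBW:sdaha}), $\sdaha_W$ is spanned by the monomials $\xi^\al \sigma y^\gamma$. Writing $\xi^\al = m\,\xi^\epsilon$ with $m \in \C[\xi_1^2,\ldots,\xi_n^2]$ and $\epsilon \in \Z_2^n$, and using that $\C[\xi_1^2,\ldots,\xi_n^2]$ is module-finite over $\C[\xi_1^2,\ldots,\xi_n^2]^W$, that $\C[y_1,\ldots,y_n]$ is module-finite over $\C[y_1,\ldots,y_n]^W$, and that the two invariant rings are central (hence can be moved freely past the remaining finitely many factors), one sees that $\sdaha_W$ is generated as a module over the commutative subalgebra $\C[\xi_1^2,\ldots,\xi_n^2]^W \cdot \C[y_1,\ldots,y_n]^W$ of its even center by finitely many elements. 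Therefore $\sdaha_W$ is module-finite over its even center.

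I do not anticipate a serious obstacle. The only computations requiring a little care are the identity $\Psi(\xi_i^2) = \tfrac12 x_i^2$ and the $W$-equivariance of $\Psi$ on these subalgebras, both routine from the explicit formulas in Theorem~\ref{th:isomDBdaha} and the relations (\ref{clifford}) and (\ref{commCl}). Alternatively one can give a self-contained proof mirroring Proposition~\ref{CenDaHa}: for $W$-invariant $f \in \C[y_1,\ldots,y_n]$ one has $[f,\xi_i] = 0$ by Lemmas~\ref{Acomm:s[f,xi]} and \ref{Bcomm:s[f,xi]} (and the type $D$ analogue), while for $g \in \C[\xi_1^2,\ldots,\xi_n^2]^W$ one needs the analogous formulas for $[y_i,g]$, which follow from the bracket relations in the definition of $\sdaha_W$; the transport argument above is shorter and reuses exactly the structure already set up.
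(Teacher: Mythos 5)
Your proof is correct and follows essentially the same route as the paper: transport the statement for $\dahc_W$ (Proposition~\ref{CenDaHa}) across the superalgebra isomorphism of Theorem~\ref{th:isomDBdaha}, observe that the relevant invariant subalgebras sit inside $1\otimes\sdaha_W$ and hence in $\mathcal Z(\sdaha_W)$, and deduce module-finiteness from the PBW basis. The only difference is cosmetic — you work with $\Psi$ where the paper writes $\Phi$ — and you spell out the verification that $\Psi(\xi_i^2)=\tfrac12 x_i^2$, the $W$-equivariance, and the step that centrality of $1\otimes f$ in $\Cl_n\otimes\sdaha_W$ forces $f\in\mathcal Z(\sdaha_W)$, all of which the paper leaves implicit.
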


\begin{proof}

By the isomorphism $\Phi: \dahc_W \rightarrow \Cl_n \otimes
\sdaha_W$ (see Theorems~\ref{th:isomDBdaha}) and
Proposition~\ref{CenDaHa}, we have

\[
\C[y_1,\ldots, y_n]^W \subseteq \Phi(Z(\dahc_W)) = Z(\Cl_n \otimes
\sdaha_W),
\]
\[
 \C [\xi_1^2,\ldots, \xi_n^2]^W \subseteq \Phi(Z(\dahc_W)) =
Z(\Cl_n \otimes \sdaha_W).
\]
The first statement follows by noting that $\C[y_1,\ldots, y_n]^W$
and $\C[\xi_1^2,\ldots, \xi_n^2]^W$ actually lie in $\sdaha_W$. The
second statement now follows from the PBW property of $\sdaha_W$
(Theorem~\ref{PBW:sdaha}).
\end{proof}
\section{Rational covering double affine Hecke algebras (cDaHa)}
\label{sec:coveringHecke}

In this section, the rational covering double affine Hecke
algebras (cDaHa) $\cdaH_W$ associated to classical Weyl groups $W$
are introduced. It has as its natural quotients the usual rational
DaHa $\daha_W$ \cite{EG} (which will be recalled below) and the
rational sDaHa $\sdaha_W$ introduced in Section~\ref{sec:sdaha}.

\subsection{``Reflections" in $\wtd{W}$}

Recall the distinguished double cover $\wtd{W}$ of a Weyl group $W$
with generators $\td{t}_i$'s from Section~\ref{subsec:spinWeyl}.

Introduce the notation
\begin{eqnarray*}
\td{t}_{i\uparrow j} &=& \left \{
 \begin{array}{ll}
 \td{t}_i\td{t}_{i+1}\cdots \td{t}_j, & \text{ if } i\leq j\\
 1, &\text{ otherwise},
 \end{array}
 \right. \\
\td{t}_{i\downarrow j} &=& \left \{
 \begin{array}{ll}
 \td{t}_i\td{t}_{i-1}\cdots \td{t}_j, & \text{ if } i\geq j\\
 1, &\text{ otherwise}.
 \end{array}
 \right.
\end{eqnarray*}

Define the following elements in $\wtd{W}$, which are
distinguished preimages of reflections in $W$ under the canonical
map $\wtd{W} \rightarrow W$, for $1\leq i\,<j\leq n$:
{\allowdisplaybreaks
\begin{eqnarray*}
\{i,j\}
  &= & z^{j-i-1}\td{t}_{j-1}\ldots \td{t}_{i+1}\td{t}_{i}\td{t}_{i+1}\ldots \td{t}_{j-1} \\
\{j,i\} &=& z \{i,j\} \\
  \overline{\{i,j\}}
 &=& \left \{
 \begin{array}{ll}
  z^{j-i-1} \td{t}_{j \uparrow n-1} \td{t}_{i \uparrow n-2}
  \td{t}_{n}
   \td{t}_{n-2 \downarrow i} \td{t}_{n-1 \downarrow j}, & \text{for type } D_n \\
  z^{j-i} \td{t}_{j \uparrow n-1} \td{t}_{i \uparrow n-2}
   \td{t}_{n} \td{t}_{n-1} \td{t}_n
   \td{t}_{n-2 \downarrow i} \td{t}_{n-1 \downarrow j}, & \text{for type } B_n
  \end{array}
  \right.
\\
\overline{\{j,i\}} &=& \overline{\{i,j\}} \\
 \{i\}
  &=& z^{n-i} \td{t}_i\cdots \td{t}_{n-1}\td{t}_n \td{t}_{n-1}\cdots \td{t}_i \qquad (1 \le i \le n).
\end{eqnarray*}
 }
We have $\{i,j\} \in \wtd{W}_{A_{n-1}}, \overline{\{i,j\}} \in
\wtd{W}_{D_{n}}$ for $1\leq i\,<j\leq n$, and $\{i\} \in
\wtd{W}_{B_{n}}$ for $1\leq i \leq n$, while noting that we have a
sequence of subgroups $\wtd{W}_{A_{n-1}} \leq \wtd{W}_{D_{n}} \leq
\wtd{W}_{B_{n}}$. The next lemma is straightforward from the
definitions, and it helps to explain our choices of notations
(recall $s_{ij} =(i,j)$ and $\overline{s}_{ij} =\overline{(i,j)}$).

\begin{lemma} \label{lem:quotient}
Let $W$ be $\wtd{W}_{A_{n-1}}, \wtd{W}_{D_{n}}$, or
$\wtd{W}_{B_{n}}$. The canonical quotient map $\Upsilon_+: \C
\wtd{W} \rightarrow \C W$ sends (for $i \neq j$)
$$
\{i,j\} \longmapsto (i,j), \; \overline{\{i,j\}} \longmapsto
 \overline{(i,j)}, \; \{i\} \longmapsto \tau_i,
$$
and the canonical quotient map $\Upsilon_-: \C \wtd{W} \rightarrow
\C W^-$ sends (for $i \neq j$)
$$
\{i,j\} \longmapsto [i,j], \; \overline{\{i,j\}} \longmapsto
\overline{[i,j]}, \; \{i\} \longmapsto \overline{[i]}
$$
whenever it makes sense for the given $W$.
\end{lemma}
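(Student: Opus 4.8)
The plan is to verify the claimed images of the three families of elements $\{i,j\}$, $\overline{\{i,j\}}$, and $\{i\}$ directly from their definitions in $\wtd{W}$, using nothing more than the fact that $\Upsilon_+$ and $\Upsilon_-$ are algebra homomorphisms sending $\td{t}_i \mapsto s_i$ and $\td{t}_i \mapsto t_i$ respectively, and $z \mapsto 1$ (resp. $z \mapsto -1$). Indeed, by the very constructions recalled in Section~\ref{subsec:spinWeyl}, $\C W = \C\wtd{W}/\langle z-1\rangle$ and $\C W^- = \C\wtd{W}/\langle z+1\rangle$, so $\Upsilon_\pm$ are the corresponding quotient maps and are determined on generators by $\Upsilon_+(\td{t}_i) = s_i$, $\Upsilon_+(z) = 1$, $\Upsilon_-(\td{t}_i) = t_i$, $\Upsilon_-(z) = -1$. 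Then the lemma becomes a purely formal comparison: apply $\Upsilon_\pm$ word-by-word to the defining product expressions for $\{i,j\}$, $\overline{\{i,j\}}$, $\{i\}$, and match against the defining product expressions for $s_{ij}$, $\overline{s}_{ij}$, $\tau_i$ (from Section~\ref{sec:daHCa}) and for $[i,j]$, $\overline{[i,j]}$, $\overline{[i]}$ (from Section~\ref{sec:sdaha}).

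Concretely, I would organize the check into three cases. First, for $\{i,j\} = z^{j-i-1}\td{t}_{j-1}\cdots\td{t}_{i+1}\td{t}_i\td{t}_{i+1}\cdots\td{t}_{j-1}$: applying $\Upsilon_+$ kills the $z^{j-i-1}$ factor and turns the word in the $\td{t}$'s into the same word in the $s$'s, which is exactly the standard reduced-word expression for the transposition $(i,j) = s_{ij}$; applying $\Upsilon_-$ replaces $z^{j-i-1}$ by $(-1)^{j-i-1}$ and the $\td{t}$-word by the corresponding $t$-word, which is precisely the definition of $[i,j]$. Second, for $\overline{\{i,j\}}$ one does the analogous comparison in each of the two cases (type $D_n$ and type $B_n$) against the displayed formulas for $\overline{s}_{ij}$ and $\overline{[i,j]}$ — here one must be careful to track the exponent of $z$ ($z^{j-i-1}$ for $D_n$, $z^{j-i}$ for $B_n$) so that $\Upsilon_-$ produces the correct sign $(-1)^{j-i-1}$ or $(-1)^{j-i}$, and to use the identities $\overline{(n-1,n)} = s_n$ and $\td{t}_n = $ lift of $s_n$ to reconcile the boundary generator. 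Third, $\{i\} = z^{n-i}\td{t}_i\cdots\td{t}_{n-1}\td{t}_n\td{t}_{n-1}\cdots\td{t}_i$ maps under $\Upsilon_+$ to $s_i\cdots s_{n-1}s_ns_{n-1}\cdots s_i = \tau_i$ and under $\Upsilon_-$ to $(-1)^{n-i}t_i\cdots t_{n-1}t_nt_{n-1}\cdots t_i = \overline{[i]}$, matching the definitions verbatim. The relations $\{j,i\} = z\{i,j\}$ and $\overline{\{j,i\}} = \overline{\{i,j\}}$ are likewise transported: $\Upsilon_+$ gives $(j,i) = (i,j)$ and $\Upsilon_-$ gives $[j,i] = -[i,j]$, consistent with the conventions already fixed in the earlier sections.

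Since $\Upsilon_\pm$ are ring homomorphisms, there is essentially no obstacle of substance here: once the generator images and the $z$-images are pinned down, each assertion is a one-line symbol substitution. The only thing requiring genuine attention — and the "hard part" in the sense of bookkeeping — is keeping the powers of $z$ and the resulting signs straight in the type $B$ versus type $D$ formulas for $\overline{\{i,j\}}$, and being consistent with the (already chosen) sign conventions $[j,i] = -[i,j]$, $\overline{[j,i]} = \overline{[i,j]}$ in $\C W^-$. Because these conventions were set up precisely so that the definitions of $\{i,j\}$ etc.\ in $\wtd{W}$ reduce correctly, the verification goes through cleanly, and I would simply remark that the computation is immediate from comparing the definitions.
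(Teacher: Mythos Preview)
Your proposal is correct and matches the paper's own treatment: the paper does not give a proof at all but simply remarks that the lemma ``is straightforward from the definitions,'' which is exactly the symbol-by-symbol verification you outline. Your attention to the sign bookkeeping (the powers of $z$ in the type $D$ versus type $B$ formulas for $\overline{\{i,j\}}$) is appropriate, and indeed the definitions of $[i,j]$, $\overline{[i,j]}$, $\overline{[i]}$ in $\C W^-$ and of $\{i,j\}$, $\overline{\{i,j\}}$, $\{i\}$ in $\wtd{W}$ were set up precisely so that this comparison is immediate.
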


\subsection{The rational Cherednik algebras}

Recall that $\mathfrak h =\C^n$, and we have identified $\C
[\mathfrak h] =\C [x_1,\ldots,x_n]$ and $\C [\mathfrak h^*] =\C
[y_1,\ldots, y_n]$. Below we shall recall the definition \cite{EG}
of rational double affine Hecke algebras (also called rational
Cherednik algebras) associated to the classical Weyl groups in a
more concrete form.

Let $t,u \in \C$. Let $W$ be one of the Weyl groups $W_{A_{n-1}},
W_{D_{n}}$, or $W_{B_{n}}$ respectively. The rational Cherednik
algebra $\daha_W$ is the algebra generated by $x_i,y_i$ $(1\leq i
\leq n)$ and $W$, subject to the common relations
(\ref{xx;yy}--\ref{xykappa:S_n}), and the additional relations
(\ref{commAij}--\ref{commAii}) for type $A$,
(\ref{commDij}--\ref{commDii}) for type $D$,
(\ref{commBij}--\ref{commBii}) for type $B$, respectively:
\begin{align}
x_i x_j = x_j x_i, & \quad y_i y_j = y_j y_i \qquad (\forall
i,j) \label{xx;yy} \\
\sigma x = x^\sigma \sigma, & \quad \sigma y = y^{\sigma}\sigma
\qquad (\sigma \in W, x\in \mathfrak h^*, y\in \mathfrak h) \label{xykappa:S_n}\\
\lbrack y_j, x_i] &= u s_{ij} \qquad (i\neq j) \label{commAij}\\
\lbrack y_i, x_i] &= t\cdot 1 -u \sum_{k\neq i} s_{k i}
\label{commAii} \\
\lbrack y_j, x_i] &= u(s_{i j}-\overline{s}_{i j}) \qquad (i\neq j) \label{commDij}\\
\lbrack y_i, x_i] &= t \cdot 1 -u \sum_{k\neq i}(s_{k
i}+\overline{s}_{k i}) \label{commDii}\\
\lbrack y_j, x_i] &= u(s_{i j}-\overline{s}_{i j})
 \qquad (i\neq j) \label{commBij}\\
\lbrack y_i, x_i] &= t \cdot 1 -u \sum_{k\neq i}(s_{k
i}+\overline{s}_{ki}) - v\tau_i. \label{commBii}
\end{align}
The algebra $\daha_W$ has the following well-known PBW property:
the multiplication of the subalgebras induces a vector space
isomorphism
\[
\C[\h^*] \otimes\C W \otimes \C[\h]
\stackrel{\simeq}{\longrightarrow} \daha_W.
\]
Equivalently, $\{x^\al w y^\gamma| \al, \gamma \in\Z_{+}^n, w\in
W\}$ form a PBW basis for $\daha_W$.

\subsection{The rational covering double affine Hecke algebra $\cdaH_W$ }

Recall that the group $\widetilde{W}$ from
Section~\ref{sec:finite} has the defining relations given in
TABLE~1, Section~\ref{sec:finite}, and $\widetilde{W}$ contains a
central element $z$ of order $2$.

\begin{definition}
Let $W = W_{A_{n-1}}$, and let $t,u\in \C$. The rational covering
double affine Hecke algebra of type $A_{n-1}$, denoted by
$\cdaH_W$ or $\cdaH_{A_{n-1}}$, is the algebra generated by
$\td{x}_i,\td{y}_i$ $(1\le i \le n)$ and $z, \td{t}_1, \ldots,
\td{t}_{n-1}$ subject to the relations for $\wtd{W}$, and the following relations: $z$ is central and
\begin{align}
%
   \td{x}_i \td{x}_j = z \td{x}_j \td{x}_i,
    &\quad%
    \td{y}_i \td{y}_j = \td{y}_j \td{y}_i \quad (i\neq j) \label{xy} \\
    \td{t}_i \td{x}_j = z \td{x}_j \td{t}_i,
    & \quad
    \td{t}_i \td{y}_j = \td{y}_j \td{t}_i\quad (j\neq i, i+1) \label{txjyj}\\
    \td{t}_i \td{x}_{i+1} = z \td{x}_i \td{t}_i ,
    &\quad
    \td{t}_i \td{y}_{i+1} = \td{y}_i \td{t}_i\label{txiyi}\\
    \lbrack \td{y}_{j},\td{x}_{i}] &= uz \{i, j\}\quad (j \neq i) \nonumber \\
    \lbrack \td{y}_{i},\td{x}_{i}] &=
    -uz \sum_{k\neq i}\{i,k\}\nonumber. 
\end{align}
\end{definition}

\begin{definition}
Let $W = W_{D_{n}}$, and let $u\in \C$. The rational covering
double affine Hecke algebra of type $D_{n}$, denoted by $\cdaH_W$
or $\cdaH_{D_{n}}$, is the algebra generated by
$\td{x}_i,\td{y}_i$ $(1\le i \le n)$ and $z, \td{t}_1, \ldots,
\td{t}_{n}$, subject to the relations for $\wtd{W}$, relations (\ref{xy}--\ref{txiyi}), and the following additional relations: $z$ is central and
\begin{align*}
    \td{t}_n \td{x}_{j} & = z \td{x}_j \td{t}_n, \qquad\quad\;\,
    \td{t}_n \td{y}_{j} = \td{y}_j \td{t}_n \quad (i\neq n-1,n) \\
    \td{t}_n \td{x}_{n} & = - \td{x}_{n-1} \td{t}_n, \qquad%
    \td{t}_n \td{y}_{n} = - \td{y}_{n-1} \td{t}_n\\
\lbrack \td{y}_{j},\td{x}_{i}]
&= uz \left (\{i,j\} - \overline{\{i,j\}} \right) \quad (j\neq i) \\
\lbrack \td{y}_{i},\td{x}_{i}] &= -uz \sum_{k\neq i} \left
(\{i,k\} + \overline{\{i,k\}} \right).
\end{align*}
\end{definition}

\begin{definition}
Let $W = W_{B_{n}}$, and let $u,v\in \C$. The rational covering
double affine Hecke algebra of type $B_{n}$, denoted by $\cdaH_W$
or $\cdaH_{B_{n}}$, is the algebra generated by
$\td{x}_i,\td{y}_i$ $(1\le i \le n)$ and $z, \td{t}_1, \ldots,
\td{t}_{n}$, subject to the relations for $\wtd{W}$, relations (\ref{xy}--\ref{txiyi}), and the following additional relations: $z$ is central and
\begin{align*}
    \td{t}_n \td{x}_{i} & = z \td{x}_i \td{t}_n, \quad\quad
    \td{t}_n \td{y}_{i} = \td{y}_i \td{t}_n \quad (i\neq n) \\
    \td{t}_n \td{x}_{n} & = - \td{x}_{n} \td{t}_n, \quad\;
    \td{t}_n \td{y}_{n} = - \td{y}_{n} \td{t}_n\\
\lbrack \td{y}_{j},\td{x}_{i}]
&= uz \left (\{i,j\} - \overline{\{i,j\}} \right) \quad (j\neq i) \\
\lbrack \td{y}_{i},\td{x}_{i}] &= -uz \sum_{k\neq i}\left
(\{i,k\} + \overline{\{i,k\}} \right) - vz {\{i\}}.
\end{align*}
\end{definition}

\subsection{PBW basis for $\cdaH_W$}

The following result provides a link between the rational
Cherednik algebra $\daha_W^{t=0}$ with the specialization $t=0$
and the rational sDaHa via the notion of rational cDaHa.

\begin{proposition} \label{dquotient}
Let $W = W_{A_{n-1}}, W_{D_n}$, or $W_{B_n}$. Then the quotient of
the rational cDaHa $\cdaH_W$ by the ideal $\langle z-1 \rangle$
(respectively, by the ideal $\langle z+1 \rangle$) is isomorphic
to the rational Cherednik algebra $\daha_W^{t=0}$ (respectively,
the rational sDaHa $\sdaha_W$).
\end{proposition}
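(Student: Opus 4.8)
The plan is to compare explicit presentations. Since $z$ is central of order two in $\cdaH_W$, in the quotient $\cdaH_W/\langle z-1\rangle$ one has $z=1$ and in $\cdaH_W/\langle z+1\rangle$ one has $z=-1$; so it suffices to exhibit a bijection of generators under which the defining relations of $\cdaH_W$, after this specialization, become a complete set of defining relations for $\daha_W^{t=0}$ (resp.\ for $\sdaha_W$), and conversely. Because $\cdaH_W$, $\daha_W^{t=0}$ and $\sdaha_W$ are all given by generators and relations, each such matching yields a well-defined algebra homomorphism; the two homomorphisms obtained in each case are manifestly inverse to one another on generators, hence are isomorphisms.

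For the quotient by $\langle z-1\rangle$ I send $\td x_i\mapsto x_i$, $\td y_i\mapsto y_i$, $\td t_i\mapsto s_i$, $z\mapsto 1$. With $z=1$: the relations of $\wtd W$ become those of $W$ (as $\wtd W/\langle z-1\rangle\cong W$, Section~\ref{sec:finite}); \eqref{xy} becomes \eqref{xx;yy}; \eqref{txjyj}--\eqref{txiyi} say that each $s_i$ conjugates $\td x_j,\td y_j$ to $s_i(x_j),s_i(y_j)$, i.e.\ \eqref{xykappa:S_n}; and by Lemma~\ref{lem:quotient} the bracket relations $[\td y_j,\td x_i]=uz\{i,j\}$ (and their type $D$, $B$ analogues built from $\overline{\{i,j\}},\{i\}$) become exactly \eqref{commAij}--\eqref{commBii} with $t=0$ --- the constant term $t\cdot 1$ is simply absent because $\cdaH_W$ has no $\wtd W$-free summand on the right-hand sides. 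This gives $\cdaH_W/\langle z-1\rangle\cong\daha_W^{t=0}$.

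For the quotient by $\langle z+1\rangle$ I instead identify $\td x_i\leftrightarrow\xi_i$, $\td y_i\leftrightarrow y_i$, $\td t_i\leftrightarrow t_i$, $z=-1$. Now the relations of $\wtd W$ become precisely the relations of $\C W^-$ (this is the definition $\C W^-=\C\wtd W/\langle z+1\rangle$); \eqref{xy} becomes $\xi_i\xi_j=-\xi_j\xi_i$ together with $y_iy_j=y_jy_i$, i.e.\ \eqref{xi}; \eqref{txjyj} becomes $t_i\xi_j=-\xi_jt_i$, $t_iy_j=y_jt_i$ for $j\neq i,i+1$; and \eqref{txiyi} becomes $t_i\xi_{i+1}=-\xi_it_i$, $t_iy_{i+1}=y_it_i$, which (multiplying through by $t_i$ and using $t_i^2=1$) are equivalent to $t_i\xi_i=-\xi_{i+1}t_i$, $t_iy_i=y_{i+1}t_i$; together these are relations \eqref{sapart} and their companions. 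Finally, by Lemma~\ref{lem:quotient}, $[\td y_j,\td x_i]=uz\{i,j\}$ etc.\ become $[y_j,\xi_i]=-u[i,j]$ etc.\ --- the $-u$ coming from $z=-1$, and $\overline{\{i,j\}}\mapsto\overline{[i,j]}$, $\{i\}\mapsto\overline{[i]}$ producing the extra type $D$ and $B$ terms --- which are the remaining defining relations of $\sdaha_W$. Hence $\cdaH_W/\langle z+1\rangle\cong\sdaha_W$.

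The one point requiring care is that the $z$-twisted commutation relations and $z$-scaled bracket relations of $\cdaH_W$ must reproduce, upon setting $z=-1$, \emph{precisely} the sign conventions hard-wired into $\C W^-$ and into the odd order-two elements $[i,j],\overline{[i,j]},\overline{[i]}$; this is exactly what Lemma~\ref{lem:quotient} supplies, and modulo it the argument is a routine comparison of relation lists (done uniformly, the $\td t_n$-relations of types $D$ and $B$ included). As a byproduct, since $(1\pm z)/2$ are central orthogonal idempotents summing to $1$, one obtains a product decomposition $\cdaH_W\cong\daha_W^{t=0}\times\sdaha_W$, from which a PBW basis for $\cdaH_W$ follows from the (well-known) PBW basis of $\daha_W$ and that of $\sdaha_W$ (Theorem~\ref{PBW:sdaha}).
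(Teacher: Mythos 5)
Your proposal is correct and follows essentially the same route as the paper: the paper simply exhibits the maps $\td t_i\mapsto s_i$ (resp.\ $t_i$), $\td x_i\mapsto x_i$ (resp.\ $\xi_i$), $\td y_i\mapsto y_i$ extending $\Upsilon_\pm$ and declares the verification routine, whereas you spell out the relation-by-relation comparison (including the sign bookkeeping under $z=-1$). Your closing remark about the central idempotents $(1\pm z)/2$ giving $\cdaH_W\cong\daha_W^{t=0}\times\sdaha_W$ and hence a PBW basis is precisely the mechanism behind the paper's Theorem~\ref{PBW:sdaha} analogue for $\cdaH_W$ that immediately follows this proposition.
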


\begin{proof}
We will merely construct the isomorphisms of superalgebras
explicitly, while noting that the verification follows directly
from the definitions of the various algebras involved.

The canonical isomorphism map $\Upsilon_+: \C \wtd{W}/\langle z -
1 \rangle \rightarrow \C W$ (cf. Lemma~\ref{lem:quotient}) can be
extended to the isomorphism of superalgebras
\begin{align*}
\Upsilon_+ : &\, \cdaH_W/ \langle z-1\rangle \longrightarrow \daha_W^{t=0}, \\
& \td{t}_i \longmapsto s_i, \; \td{x}_i \longmapsto x_i, \;
\td{y}_i\longmapsto y_i.
\end{align*}
Also, the canonical isomorphism map $\Upsilon_-: \C
\wtd{W}/\langle z + 1 \rangle \rightarrow \C W^-$ (cf.
Lemma~\ref{lem:quotient}) can be extended to the isomorphism of
superalgebras $\Upsilon_+: \cdaH_W/ \langle z+1\rangle \rightarrow
\sdaha_W$ by sending $ \td{t}_i \mapsto t_i, \; \td{x}_i \mapsto
\xi_i, \; \td{y}_i\mapsto y_i. $
\end{proof}

The next theorem follows from Proposition~\ref{dquotient}, the PBW
basis Theorem~\ref{PBW:sdaha} for $\sdaha_W$, and the PBW property
for $\daha_W$ (cf. \cite{EG}), by the same type of argument for
\cite[Proposition~3.10]{W2} or \cite[Theorem ~5.5]{KW}.
\begin{theorem}
Let $W = W_{A_{n-1}}, W_{D_n}$, or $W_{B_n}$. Then the elements
$\td{x}^{\al} \td{w}\td{y}^{\gamma}$, where $\al,\gamma \in
\Z_{+}^n \text{ \ and }\td{w} \in \wtd{W}$, form a basis for
$\cdaH_W$.
\end{theorem}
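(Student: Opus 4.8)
The claim is that the monomials $\td{x}^{\al}\td{w}\td{y}^{\gamma}$ ($\al,\gamma\in\Z_+^n$, $\td{w}\in\wtd{W}$) form a basis of $\cdaH_W$. As remarked just before the statement, this should follow from Proposition~\ref{dquotient} together with the PBW properties already established for $\sdaha_W$ (Theorem~\ref{PBW:sdaha}) and for $\daha_W$ (the PBW statement recalled from \cite{EG}), imitating the argument of \cite[Proposition~3.10]{W2} (see also \cite[Theorem~5.5]{KW}). So the proof is in two halves: a spanning argument and a linear independence argument.

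\emph{Spanning.} First I would observe that $z$ is central of order $2$, so $\cdaH_W$ decomposes as a direct sum of eigenspaces $\cdaH_W = \cdaH_W^+ \oplus \cdaH_W^-$ where $z$ acts by $\pm 1$; equivalently $\cdaH_W/\langle z\mp 1\rangle$ are the two summands. Using the defining relations \eqref{xy}--\eqref{txiyi} and the commutator relations for $[\td y_j,\td x_i]$, one can move all the $\td x$'s to the left and all the $\td y$'s to the right at the cost of introducing elements of $\C\wtd{W}$ in the middle (each commutator bracket lands in $\C\wtd{W}\cdot z\subseteq\C\wtd{W}$, and each straightening relation only inserts a factor of $z$). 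Since $\{\td w : \td w\in\wtd{W}\}$ spans $\C\wtd{W}$, this shows the monomials $\td x^\al\td w\td y^\gamma$ span $\cdaH_W$ as a vector space. This part is routine and essentially identical to the spanning argument for $\sdaha_W$ in the proof of Theorem~\ref{PBW:sdaha}.

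\emph{Linear independence.} This is the heart of the matter and the step I expect to be the main obstacle, though it is handled cleanly by the quotient trick. Suppose a finite linear combination $\sum c_{\al,\td w,\gamma}\,\td x^\al\td w\td y^\gamma = 0$ in $\cdaH_W$. Split $\wtd{W}$ into the two cosets of $\Z_2=\{1,z\}$: writing $\td w$ and $z\td w$ for the two preimages of each $w\in W$, group the sum accordingly. Apply the quotient map $\Upsilon_+:\cdaH_W\to\daha_W^{t=0}$ from Proposition~\ref{dquotient}; since $\Upsilon_+$ kills $z-1$, it identifies $\td x^\al(z\td w)\td y^\gamma$ with $\td x^\al\td w\td y^\gamma$, and the image relation becomes $\sum_w(\sum_{\al,\gamma}(c_{\al,\td w,\gamma}+c_{\al,z\td w,\gamma}))\,x^\al w y^\gamma = 0$ in $\daha_W^{t=0}$. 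By the PBW property of $\daha_W$, all the coefficient sums $c_{\al,\td w,\gamma}+c_{\al,z\td w,\gamma}$ vanish. Symmetrically, applying $\Upsilon_-:\cdaH_W\to\sdaha_W$ (which kills $z+1$) and using the PBW property of $\sdaha_W$ (Theorem~\ref{PBW:sdaha}) — here one must track that $\Upsilon_-$ sends $\{i,j\}\mapsto[i,j]$ etc., as in Lemma~\ref{lem:quotient}, so the relations match up — yields $c_{\al,\td w,\gamma}-c_{\al,z\td w,\gamma}=0$ for every choice of preimage $\td w$ (with an appropriate sign bookkeeping coming from $\Upsilon_-(z\td w)=-\Upsilon_-(\td w)$). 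Adding and subtracting these two families of equations forces every $c_{\al,\td w,\gamma}=0$, which is exactly linear independence over all of $\wtd{W}$.

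\emph{Remarks on the obstacle.} The genuinely delicate point is not the abstract two-quotients argument but making sure the two quotient maps in Proposition~\ref{dquotient} are well defined and compatible with the chosen PBW monomials — i.e.\ that under $\Upsilon_+$ the cDaHa relations collapse precisely to the Cherednik relations \eqref{commAij}--\eqref{commBii} with $t=0$, and under $\Upsilon_-$ to the sDaHa relations, including the sign subtleties in the $t_i\xi_j$ relations and in $\overline{\{i,j\}}\mapsto\overline{[i,j]}$. Since Proposition~\ref{dquotient} already asserts both are isomorphisms, I may invoke it directly; the only remaining care is that the span of $\{\td x^\al\td w\td y^\gamma\}$ is not bigger than what the two PBW bases jointly detect, which is guaranteed because $\dim$-wise the $\Z_2$-grading by $z$ splits $\cdaH_W$ into the two quotients and the monomial set maps onto a PBW basis of each summand.
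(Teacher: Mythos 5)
Your proposal is correct and matches the argument the paper has in mind: it invokes Proposition~\ref{dquotient} together with the PBW theorems for $\sdaha_W$ (Theorem~\ref{PBW:sdaha}) and $\daha_W$, decomposing $\cdaH_W$ via the central involution $z$ and pushing a hypothetical linear relation through both quotients $\Upsilon_\pm$ to force all coefficients to vanish. This is precisely the ``two quotients'' scheme of \cite[Proposition~3.10]{W2} and \cite[Theorem~5.5]{KW} that the paper cites.
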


\section{Appendix: proofs of several lemmas}
\label{sec:Appendix}

\subsection{Proof of Lemma~\ref{conj-inv-c}}
We will show that the relations (\ref{Byjxi}) and (\ref{Byixi})
are invariant under the conjugation by elements $c_l$, $1\le l \le
n$. The verifications for the invariants of other relations under
the conjugation by $c_l$ are similar and will be omitted.

Consider the relation (\ref{Byjxi}) first. Clearly, (\ref{Byjxi})
is invariant under the conjugation by $c_l, l\neq i,j$. Moreover,
we calculate that
\begin{eqnarray*}
c_i \text{(r.h.s. of (\ref{Byjxi}))} c_i &=& u\big((c_i c_j
-1)s_{ji}-(-c_ic_j-1)\overline{s}_{ij}\big) \\
&=& -\lbrack y_{j}, x_{i}]
 = c_i \text{(l.h.s. of (\ref{Byjxi}))} c_i, \\
 \\
c_j \text{(r.h.s. of (\ref{Byjxi}))} c_j
&=& u\big((c_j c_i+1)s_{ji}-(-c_{j} c_{i}+1)\overline{s}_{ij}\big)\\
&=& \lbrack y_{j},x_{i}]
 = c_j \text{(l.h.s. of (\ref{Byjxi}))} c_j.
\end{eqnarray*}
Thus, (\ref{Byjxi}) is conjugation-invariant by all $c_l$.

Next, we will show that the relation (\ref{Byixi}) is invariant
under the conjugation by each $c_l$. Indeed, we have
\begin{align*}
c_i \text{(r.h.s. of } &(\ref{Byixi})) c_i \\
&= - \sqrt{2} v c_i\tau_{i} c_i
 -u\sum_{k\neq
i}c_i((1+c_{k}c_{i})s_{ki} +(1-c_{k}
c_{i})\overline{s}_{ki})c_i \\
&= \sqrt{2} v\tau_{i} -u\sum_{k\neq i}((c_i c_k -
1)s_{ki}+(-c_i c_k -1)\overline{s}_{ki})\\
&= \sqrt{2} v \tau_{i} + u\sum_{k\neq i}((1+ c_{k}c_{i})s_{ki}+(1-c_{k}%
c_{i})\overline{s}_{ki})\\
&= -\lbrack y_{i},x_{i}]
 = c_i \text{(l.h.s. of (\ref{Byixi}))}
c_i.
\end{align*}
For $j \neq i$, we have
\begin{eqnarray*}
c_j \text{(r.h.s. of (\ref{Byixi}))} c_j
&=& - \sqrt{2} v \tau_{i} -u c_j((1+c_jc_i)s_{ji}
+(1-c_{j}c_{i})\overline{s}_{ji})c_j \\
&& \qquad \quad -u\sum_{k\neq i,j}c_j((1+c_{k}c_{i})s_{ki}+(1-c_{k}%
c_{i})\overline{s}_{ki})c_j\\
&=& - \sqrt{2} v \tau_{i} -u ((c_jc_i+1)s_{ji}
+(-c_{j}c_{i}+1)\overline{s}_{ji})c_j \\
&& \qquad \quad -u\sum_{k\neq i,j}((1+c_{k}c_{i})s_{ki}+(1-c_{k}%
c_{i})\overline{s}_{ki})\\
%
%
&=& c_j \text{(l.h.s. of (\ref{Byixi}))} c_j.
\end{eqnarray*}
Therefore, the lemma is proved.

%
%
\subsection{{Proof of Lemma~\ref{conj-inv-W}}}
We will show below that the relations (\ref{Byjxi}--\ref{Byixi})
are invariant under the conjugation by elements in $W_{B_n}$. The
proof can be readily modified to yield the Weyl group invariance
of the relations (\ref{Ayjxi}--\ref{Ayixi}) and
(\ref{Dyjxi}--\ref{Dyixi}) in type $A$ and $D$ cases respectively,
and we leave the details to the interested reader.

(i) We check the invariance of (\ref{Byjxi}) under $W_{B_n}$.

Consider first the conjugation invariance by the transposition
$s_{l k}$. If $\{l, k\}\cap\{i,j\} = \emptyset$, then we have
\begin{eqnarray*}
s_{l k} \text{(r.h.s. of (\ref{Byjxi}))} s_{l k} &=& u\big((1+c_j
c_i)s_{ji}-(1-c_j c_i)\overline{s}_{ij}\big) \\
&=& \lbrack y_{j},x_{i}]
 = s_{l k} \text{(l.h.s. of (\ref{Byjxi}))} s_{l k}.
\end{eqnarray*}

If $\{l, k\}\cap\{i,j\} =\{j\}$, then we may assume $l = j$ and we
have
\begin{eqnarray*}
s_{j k} \text{(r.h.s. of (\ref{Byjxi}))} s_{j k} &=& u\big(
(1+c_{k}c_{i})s_{ik}-(1-c_{k}c_{i})\overline{s}_{ik}\big)\\
&=& \lbrack y_{k},x_{i}]
 = s_{j k} \text{(l.h.s. of (\ref{Byjxi}))} s_{j k}.
\end{eqnarray*}
We leave an entirely analogous computation when $\{l,
k\}\cap\{i,j\} =\{i\}$ to the reader.

Now, if $\{l, k\}=\{i,j\}$, then
\begin{eqnarray*}
s_{j i} \text{(r.h.s. of (\ref{Byjxi}))} s_{j i} &=& u\big(
(1+c_{i}c_{j})s_{ij}-(1-c_{i}c_{j})\overline{s}_{ij}\big) \\
&=& \lbrack y_{i},x_{j}]
 = s_{j i} \text{(l.h.s. of (\ref{Byjxi}))} s_{j i}.
\end{eqnarray*} So (\ref{Byjxi}) is invariant under the conjugation by
each transposition $s_{l k}$.

It remains to show that (\ref{Byjxi}) is invariant under the
conjugation by the simple reflection $s_n= \tau_n$. Observe that
(\ref{Byjxi}) is clearly invariant under the conjugation by $s_n$
for $n \neq j,i$ Moreover, if $j=n$ then we have
\begin{eqnarray*}
s_{n} \text{(r.h.s. of (\ref{Byjxi}))} s_{n} &=& u\big((1-c_j
c_i)\overline{s}_{ji}-(1+c_j c_i)s_{ij}\big)\\
&=& -\lbrack y_{j},x_{i}]
 = s_{n} \text{(l.h.s. of (\ref{Byjxi}))} s_{n}.
\end{eqnarray*}
If $i=n$, then we have
\begin{eqnarray*}
s_{n} \text{(r.h.s. of (\ref{Byjxi}))} s_{n} &=& u\big((1-c_j
c_i)\overline{s}_{ji}-(1+c_j c_i)s_{ij}\big)\\
&=& -\lbrack y_{j},x_{i}] = s_{n} \text{(l.h.s. of (\ref{Byjxi}))}
s_{n}.
\end{eqnarray*}
This completes (i).

 \vspace{.4cm}

(ii) We check the invariance of (\ref{Byixi}) under $W_{B_n}$.

Consider first the conjugation invariance by a transposition $s_{j
l}$. If $\{j, l\}\cap\{i\} = \emptyset$, then we have
\begin{align*}
 s_{j l} (\text{r.h.s. of } & (\ref{Byixi})) s_{j l}\\
%
%
= &
 -u s_{j l}\left((1+c_j c_i)s_{j i}+(1-c_j c_i)\overline{s}_{j
i}\right)s_{j l} \\
& -us_{j l}\left((1+c_l c_i)s_{l i}+(1-c_l c_i)\overline{s}_{l
i}\right)s_{j l}\\
& -u\sum_{k\neq i,j,l}s_{j l}((1+c_{k}c_{i})s_{ki}+(1-c_{k}%
c_{i})\overline{s}_{ki})s_{j l}- \sqrt{2} v \tau_{i}\\
= & \lbrack y_{i},x_{i}]
 =s_{j l} \text{(l.h.s. of (\ref{Byixi}))}
s_{j l}.
\end{align*}
If $\{j, l\}\cap\{i\} = \{i\}$, we may assume that $j = i$, and
then we have
\begin{align*}
s_{i l} \text{(r.h.s. of } & (\ref{Byixi})) s_{i l} \\
%
%
=&
-u s_{i l}\left((1+c_l c_i)s_{j i}+(1-c_l c_i)\overline{s}_{l
i}\right)s_{i l} \\
& -u\sum_{k\neq i,l} s_{i l}((1+c_{k}c_{l})s_{kl}+(1-c_{k}%
c_{l})\overline{s}_{kl})s_{i l} - \sqrt{2} v \tau_{l}\\
=& \lbrack y_{l},x_{l}] =s_{i l} \text{(l.h.s. of (\ref{Byixi}))}
s_{i l}.
\end{align*}

It remains to show that (\ref{Byixi}) is invariant under the
conjugation by the simple reflection $s_n \equiv \tau_n \in
W_{B_n}$. If $i \neq n$, we have
{\allowdisplaybreaks
\begin{align*}
s_{n} \text{(r.h.s. of } & (\ref{Byixi})) s_{n} \\
%
%
&= - \sqrt{2} v \tau_{i}
  -u s_n\left((1+c_n c_i)s_{n i}+(1-c_n
c_i)\overline{s}_{n i}
)\right)s_{n} \\
& -u\sum_{k\neq i,n}s_n ((1+c_{k}c_{i})s_{ki}+(1-c_{k}%
c_{i})\overline{s}_{ki})s_n \\
&= - \sqrt{2} v \tau_{i}
 -u \left((1-c_n c_i)\overline{s}_{n
i}+(1+c_n c_i)s_{n i}
)\right) \\
& -u\sum_{k\neq i,n}((1+c_{k}c_{i})s_{ki}+(1-c_{k}%
c_{i})\overline{s}_{ki})\\
&= \lbrack y_{i},x_{i}]
= s_{n} \text{(l.h.s. of (\ref{Byixi}))} s_{n}.
\end{align*}
 }
 If $i = n$, then
\begin{align*}
s_{n} \text{(r.h.s. of } & (\ref{Byixi})) s_{n} \\
&= - \sqrt{2} v \tau_{n}
-u\sum_{k\neq n}((1-c_k c_n)\overline{s}_{kn }+(1+c_k c_n)s_{kn}) \\
&= \lbrack y_{n},x_{n}]
= s_{n} \text{(l.h.s. of (\ref{Byixi}))} s_{n}.
\end{align*}
This completes the proof of (ii). Hence the lemma is proved.
%
%
\subsection{{Proof of Lemma~\ref{Jacobi}}}
We will establish the Jacobi identity for $W = W_{B_n}$. The proof
can be easily modified for the cases of type $A$ and $D$, and we
leave the details to the reader.

The Jacobi identity trivially holds among triple $x_i$'s or triple
$y_i$'s.

Now, we consider the triple with two $y$'s and one $x$. The case
with two identical $y_i$ is trivial. So we first consider $x_i$,
$y_j$, and $y_l$ where $i,j,l$ are all distinct. The Jacobi
identity holds in this case since
\begin{align*}
[x_i,[y_j,y_l]] &+[y_l,[x_i,y_j]]+[y_j,[y_l,x_i]] \\
&= 0+ [y_l, -u\big((1+c_{j}c_{i})s_{ji}-(1-c_{j}
c_{i})\overline{s}_{ij}\big)] \\
& \qquad + [y_j, u\left((1+c_{l}c_{i})s_{li}-(1-c_{l}
c_{i})\overline{s}_{il}\right)]
 =0.
\end{align*}

Now for $i\neq j$, we have
{\allowdisplaybreaks
\begin{align*}
 [x_i,[y_i,y_j& ]] +[y_j,[x_i,y_i]]+[y_i,[y_j,x_i]]\\
&= 0+ [y_j, u\sum_{k\neq i}\big ( (1+c_{k}c_{i})s_{ki}
  +(1-c_{k}c_{i})\overline{s}_{ki} \big )+ \sqrt{2} v\tau_{i}]\\
& \qquad + [y_i,u\left( (1+c_{j}c_{i})s_{ji}-(1-c_{j}
    c_{i})\overline{s}_{ij}\right)]\\
&= \Big[y_j,u\sum_{k\neq i,j}\big ( (1+c_{k}c_{i})s_{ki}
  +(1-c_{k}c_{i})\overline{s}_{ki} \big ) \Big]\\
&\qquad + [y_j,u \big ( (1+c_{j}c_{i})s_{ji}
  +(1-c_jc_{i})\overline{s}_{ji} \big )]\\
& \qquad + [y_i,u\big((1+c_{j}c_{i})s_{ji}-(1-c_{j}
    c_{i})\overline{s}_{ij}\big)]\\
&= 0+ u \big ((1+c_j c_{i})y_j s_{ji}+(1-c_j
    c_{i})y_j\overline{s}_{ji} \big) \\%
&\qquad -u \big ((1+c_jc_{i})s_{ji}y_j+(1-c_j
    c_{i})\overline{s}_{ji}y_j \big ) \\
& \qquad +u\left( (1+c_{j}c_{i})y_i s_{ji}-(1-c_{j}
    c_{i})y_i \overline{s}_{ij}\right) \\%
& \qquad -u\big((1+c_{j}c_{i})s_{ji}y_i-(1-c_{j}
    c_{i})\overline{s}_{ij}y_i\big)
    = 0.
\end{align*}
 }

Now we consider the Jacobi identity with one $y$ and two $x$'s.
The case with all distinct indices can be easily verified as
above. Moreover, for $i\neq j$, we have
{\allowdisplaybreaks
\begin{align*}
 [x_i, &[y_i,x_j ]] +[x_j,[x_i,y_i]]+[y_i,[x_j,x_i]]\\
& = [x_i, u\left( (1+c_ic_j)s_{ij}-(1-c_ic_j)\overline{s}_{ij}\right)] \\
& \quad+ \Big[x_j, u\sum_{k\neq i}\big ( (1+c_{k}c_{i})s_{ki}
  +(1-c_{k}c_{i})\overline{s}_{ki} \big )+ \sqrt{2} v\tau_{i}\Big] +0\\
& = [x_i, u\left( (1+c_ic_j)s_{ij}-(1-c_ic_j)\overline{s}_{ij}\right)] \\
&\qquad + [x_j, u \big ( (1+c_j c_{i})s_{ij}
  +(1-c_jc_{i})\overline{s}_{ij} \big ) ] \\
&= u \big ((1 -c_i c_j)x_i s_{ij} -(1 +c_ic_j) x_i\overline{s}_{ij} \big) \\
&\qquad -u \big ((1+c_ic_j)s_{ij}x_i -(1- c_ic_j)\overline{s}_{ij}x_i \big ) \\
& \qquad +u\left( (1 -c_{j}c_{i})x_j s_{ij} + (1 +c_{j}
    c_{i}) x_j \overline{s}_{ij}\right) \\
& \qquad -u\big((1+c_{j}c_{i})s_{ij}x_j +(1-c_{j}
    c_{i})\overline{s}_{ij}x_j \big)
    = 0.
\end{align*}
 }
This completes the verification of the Jacobi identity for any
triples.
%
%
\subsection{{Proof of Lemma~\ref{Bcomm:[y,x^l]}}}
We will proceed by induction. For $l=1$, then the equations hold
by (\ref{Byjxi}) and (\ref{Byixi}). Now assume that the statement
is true for $l$. Then
    {\allowdisplaybreaks
    \begin{eqnarray*}
    \lbrack y_{i},x_{j}^{l+1}] &=& \lbrack y_{i},x_{j}^{l}] x_j + x_j^l \lbrack
    y_{i},x_{j}]\\
    &=& u \left(\frac{x_j^l - x_i^l}{x_j - x_i} +
    \frac{x_j^l - (-x_i)^l}{x_j + x_i}c_i c_j\right)s_{i j}x_j\\
    && - u \left(\frac{x_j^l - (-x_i)^l}{x_j + x_i} - \frac{x_j^l -
    x_i^l}{x_j - x_i}c_i c_j\right)\overline{s}_{i j}x_j\\
    && + x_j^l u\big((1+c_{i}c_{j})s_{ij}-(1-c_{i}c_{j})\overline{s}_{ij}\big)\\
    &=& u \left(\frac{x_j^{l+1} - x_i^{l+1}}{x_j - x_i} +
    \frac{x_j^{l+1} - (-x_i)^{l+1}}{x_j + x_i}c_i c_j\right)s_{i j}\\
    && - u \left(\frac{x_j^{l+1} - (-x_i)^{l+1}}{x_j + x_i}-
    \frac{x_j^{l+1} - x_i^{l+1}}{x_j - x_i}c_i c_j\right)\overline{s}_{i j}.
    \end{eqnarray*}
    }
{\allowdisplaybreaks
\begin{eqnarray*}
    \lbrack y_{i},x_{i}^{l+1}] &=& \lbrack y_{i},x_{i}^{l}] x_i + x_i^l \lbrack
    y_{i},x_{i}]\\
    &=&
    -u \sum_{k\neq i} \left(\frac{x_i^l - x_k^l}{x_i - x_k} +
    \frac{x_i^l - (-x_k)^l}{x_i + x_k}c_k c_i\right)s_{k i}x_i \\
    && - u \sum_{k\neq i} \left(\frac{x_i^l - (-x_k)^l}{x_i + x_k} -
    \frac{x_i^l - x_k^l}{x_i - x_k}c_k c_i\right)\overline{s}_{k i}x_i \\
    && - \sqrt{2}v \frac{x_i^l \tau_i - \tau_i
    x_i^l}{2x_i} x_i\\
    && -u x_i^l\sum_{k\neq i}((1+c_{k}c_{i})s_{ki}+(1-c_{k}%
    c_{i})\overline{s}_{ki})- \sqrt{2} v x_i^l \tau_{i}\\
    &=&
    -u \sum_{k\neq i} \left(\frac{x_i^{l+1} - x_k^{l+1}}{x_i - x_k} +
    \frac{x_i^{l+1} - (-x_k)^{l+1}}{x_i + x_k}c_k c_i\right)s_{k i} \\
    && - u \sum_{k\neq i} \left(\frac{x_i^{l+1} - (-x_k)^{l+1}}{x_i + x_k} -
    \frac{x_i^{l+1} - x_k^{l+1}}{x_i - x_k}c_k c_i\right)\overline{s}_{k i}\\
    && - \sqrt{2}v \frac{x_i^{l+1} \tau_i - \tau_i x_i^{l+1}}{2x_i}.
\end{eqnarray*}
    }
This completes the proof of the identities.

%
%
\subsection{{Proof of Lemma~\ref{Bcomm:[y,f]}}}
It suffices to check the formula for every monomial $f$. First, we
consider the monomial $g = \prod_{j\neq i} x_j^{a_j}$. By
induction and Lemma \ref{Bcomm:[y,x^l]}, we can show that the
formula holds for the monomial of the form $g = \prod_{j\neq i}
x_j^{a_j}$ (the detail of the induction step does not differ much
from the following calculation). Now consider the monomial $f
=x_i^{l} g$. By Lemma~\ref{Bcomm:[y,x^l]}, we have
    {\allowdisplaybreaks
\begin{eqnarray*}
\lbrack y_i,f] &=& \lbrack y_i,x_i^{l}]g + x_i^l \lbrack y_i, g]
\\
     &=& -u \sum_{k\neq i} \left(\frac{x_i^l - x_k^l}{x_i - x_k} +
        \frac{x_i^l - (-x_k)^l}{x_i + x_k}c_k c_i\right)s_{k i}g \\
        && - u \sum_{k\neq i} \left(\frac{x_i^l - (-x_k)^l}{x_i + x_k} -
        \frac{x_i^l - x_k^l}{x_i - x_k}c_k c_i\right)\overline{s}_{k i}g \\
     && - \sqrt{2}v \frac{x_i^l - (-x_i)^l}{2x_i}\tau_i g\\
    && -u\sum_{k\neq i} x_i^l\left(\frac{g- g^{s_{ki}}}{x_i - x_k} +
    \frac{g - g^{\overline{s}_{k i}}}{x_i + x_k}c_k c_i\right)s_{k i} \\
    && - u \sum_{k\neq i} x_i^l\left(\frac{g - g^{\overline{s}_{k
    i}}}{x_i + x_k} - \frac{g - g^{s_{k i}}}{x_i - x_k}c_k
    c_i\right)\overline{s}_{k i}\\
    &=& -u \sum_{k\neq i} \left(\frac{f - f^{s_{ki}}}{x_i - x_k} +
    \frac{f - f^{\overline{s}_{k i}}}{x_i + x_k}c_k c_i\right)s_{k i} \\
    && - u \sum_{k\neq i} \left(\frac{f - f^{\overline{s}_{k i}}}{x_i +
    x_k} - \frac{f - f^{s_{k i}}}{x_i - x_k}c_k
    c_i\right)\overline{s}_{k i}
    - \sqrt{2}v \frac{f - f^{\tau_i}}{2x_i} \tau_i.
\end{eqnarray*}
 }
So the lemma is proved.

\subsection{{Proof of Lemma~\ref{Bcomm:[y^l,x]}}}
We will proceed by induction.
For $l=1$, the equations hold by
(\ref{Byjxi}) and (\ref{Byixi}). Now assume that the statement is
true for $l$. Then
\begin{eqnarray*}
    \lbrack y_{j}^{l+1},x_{i}] &=& y_j \lbrack y_{j}^{l},x_{i}] + \lbrack
    y_{j},x_{i}] y_j^l\\
    &=& u y_j \left( \frac{y_j^l - y_i^l}{y_j - y_i}(1+c_j c_i)s_{i j}
        - \frac{y_j^l - (-y_i)^l}{y_j + y_i}(1-c_j c_i)\overline{s}_{i j}\right)\\
    && + u\left( (1+c_{j}c_{i})s_{ji}-(1-c_{j}c_{i})\overline{s}_{ij}\right) y_j^l \\
    &=& u \left(\frac{y_j^{l+1} - y_i^{l+1}}{y_j -
y_i}(1+c_j c_i)s_{i j} - \frac{y_j^{l+1} - (-y_i)^{l+1}}{y_j +
y_i}(1-c_j c_i)\overline{s}_{i j}\right).
\end{eqnarray*}

On the other hand, we have
{\allowdisplaybreaks
\begin{eqnarray*}
    \lbrack y_{i}^{l+1},x_{i}] &=& y_i \lbrack y_{i}^{l},x_{i}] + \lbrack
    y_{i},x_{i}] y_i^l\\
    &=& -u \sum_{k\neq i} y_i \frac{y_i^l - y_k^l}{y_i -
    y_k} (1+c_k c_i)s_{k i}\\
    && - u \sum_{k\neq i} y_i \frac{y_i^l - (-y_k)^l}{y_i +
    y_k} (1-c_k c_i)\overline{s}_{k i} - \sqrt{2}v
    y_i \frac{y_i^l - (-y_i)^l}{2y_i}\tau_i \\
    && -u\sum_{k\neq i}((1+c_{k}c_{i})s_{ki}+(1-c_{k}%
        c_{i})\overline{s}_{ki})y_i^l- \sqrt{2} v\tau_{i}y_i^l\\
    &=& -u \sum_{k\neq i} \frac{y_i^{l+1} - y_k^{l+1}}{y_i
-y_k} (1+c_k c_i)s_{k i}\\
&& - u \sum_{k\neq i} \frac{y_i^{l+1} - (-y_k)^{l+1}}{y_i + y_k}
(1-c_k c_i)\overline{s}_{k i} - \sqrt{2}v \frac{y_i^{l+1} -
(-y_i)^{l+1}}{2y_i}\tau_i.
\end{eqnarray*}
 }
This completes the proof of the lemma.


\end{document}